\newcommand{\newsection}[1]{\setcounter{equation}{0} \section{#1}}
\newtheorem{theorem}{Theorem}[section]
\newtheorem{lemma}[theorem]{Lemma}
\newtheorem{corollary}[theorem]{Corollary}
\theoremstyle{definition}
\newtheorem{definition}[theorem]{Definition}
\newtheorem{example}[theorem]{Example}
\newtheorem{proposition}[theorem]{Proposition}
\theoremstyle{remark}
\newtheorem{remark}[theorem]{Remark}
\newtheorem{question}[theorem]{Question}
\newcommand{\vp}{\varphi}
\def \C{\mathbb{C}}
\def \T{\mathbb{T}}
\def \D{\mathbb{D}}
\def \N{\mathbb{N}}
\def \Z{\mathbb{Z}}
\def \R{\mathbb{R}}
\newcommand{\bk}{\bm{k}}
\newcommand{\bl}{\bm{l}}
\newcommand{\z}{\bm{z}}
\newcommand{\clc}{\mathcal{C}}
\newcommand{\cll}{\mathcal{L}}
\newcommand{\clb}{\mathcal{B}}
\newcommand{\clh}{\mathcal{H}}
\newcommand{\clw}{\mathcal{W}}
\newcommand{\raro}{\rightarrow}
\numberwithin{equation}{section}
\newcommand{\inp}[2]{\langle{#1},\,{#2} \rangle}
\newcommand{\beqn}{\begin{eqnarray*}}
\newcommand{\eeqn}{\end{eqnarray*}}
\newcommand{\beq}{\begin{eqnarray}}
\newcommand{\eeq}{\end{eqnarray}}
\begin{document}

\title[Complex symmetric Toeplitz operators]{Characterizations of complex symmetric Toeplitz operators}

\author[Bhuia]{Sudip Ranjan Bhuia }
\address{Indian Statistical Institute, Statistics and Mathematics Unit, 8th Mile, Mysore Road, Bangalore, 560059, India}
\email{sudipranjanb@gmail.com}

\author[Pradhan]{Deepak Pradhan}
\address{Indian Statistical Institute, Statistics and Mathematics Unit, 8th Mile, Mysore Road, Bangalore, 560059,
India}
\email{deepak12pradhan@gmail.com}

\author[Sarkar]{Jaydeb Sarkar}
\address{Indian Statistical Institute, Statistics and Mathematics Unit, 8th Mile, Mysore Road, Bangalore, 560 059, India}
\email{jaydeb@gmail.com, jay@isibang.ac.in}


\subjclass[2000]{Primary 47B35, 46E20, 15B05, 32A35, 47B32, 30H10, 30H50; Secondary 47B33, 30H05}
\keywords{Conjugations, Hardy space over polydisc, complex symmetric operators, Toeplitz operators, composition operators.}

\begin{abstract}
We present complete characterizations of Toeplitz operators that are complex symmetric. This follows as a by-product of characterizations of conjugations on Hilbert spaces. Notably, we prove that every conjugation admits a canonical factorization. As a consequence, we prove that a Toeplitz operator is complex symmetric if and only if the Toeplitz operator is $S$-Toeplitz for some unilateral shift $S$ and the transpose of the Toeplitz operator matrix is equal to the matrix of the Toeplitz operator corresponding to the basis of the unilateral shift $S$. Also, we characterize complex symmetric Toeplitz operators on the Hardy space over the open unit polydisc. Our results answer the well known open question about characterizations of complex symmetric Toeplitz operators.
\end{abstract}

\maketitle

\tableofcontents

\section{Introduction}

All Hilbert spaces in this paper are complex and separable, with scalar product $\langle \cdot, \cdot \rangle$ linear in the first entry. Let $\clh$ be a Hilbert space. A map $C: \clh \rightarrow \clh$ is said to be a \textit{conjugation} if
\begin{enumerate}
\item $C$ is anti-linear, that is, $C(\alpha f+ g)= \bar{\alpha} C f + C g$ for all $\alpha \in \mathbb{C}$ and $f, g \in \clh$,
\item $C$ is involutive, that is, $C^2=I_{\clh}$, and
\item $C$ is isometric, that is, $\|Cf\|=\|f\|$ for all $f\in \clh$.
\end{enumerate}

We will denote by $\clc(\clh)$ the space of all conjugations on $\clh$. Besides general interest, added motivation for conjugations on Hilbert spaces comes from mathematical physics (cf. \cite{Bender, Moiseyev, Nesemann} and the survey \cite{garcia_prodan_putinar}). Moreover, conjugations patched up with bounded linear operators give the central object of this paper:

\begin{definition}
Let $\clh$ be a Hilbert space, $C \in \clc(\clh)$, and let $T \in \clb(\clh)$. We say that $T$ is $C$-symmetric if
\[
CT^*C = T.
\]
We say that $T$ is a complex symmetric if $T$ is $C$-symmetric for some $C \in \clc(\clh)$.
\end{definition}

Throughout this paper, $\clb(\clh)$ denotes the algebra of all bounded linear operators on $\clh$. It is well known that $T \in \clb(\clh)$ is symmetric if and only if there exists an orthonormal basis $\{f_n\}_{n \in \Lambda}$ of $\clh$ such that
\[
\langle T f_i, f_j\rangle = \langle T f_j, f_i \rangle,
\]
for all $i, j \in \Lambda$. Equivalently, this means that
\begin{equation}\label{eqn: classi: old symm}
[T]_{\{f_n\}_{n \in \Lambda}} = [T]_{\{f_n\}_{n \in \Lambda}}^t,
\end{equation}
where 
\[
[T]_{\{f_n\}_{n \in \Lambda}} = (\langle T f_j, f_i \rangle)_{i,j \in \Lambda},
\]
the formal matrix representation of $T$ with respect to the basis $\{f_n\}_{n \in \Lambda}$, and $[T]_{\{f_n\}_{n \in \Lambda}}^t$ denotes the transpose of the matrix $[T]_{\{f_n\}_{n \in \Lambda}}$. Note that $\Lambda$ is either a finite set or a countably infinite set (depending, of course, on the dimension of $\clh$).

The notion of complex symmetric operators is classic in linear analysis and mathematical physics. Nevertheless, a systematic study in this direction began only in 2006 with the work of Garcia and Putinar \cite{garcia1,garcia2}. Since then, researchers have rigorously studied questions about complex symmetric operators, and more specifically, models and concrete examples of complex symmetric operators (cf. \cite{ Garcia, GaWo, garciaCSPI, Hai, Jung} and the references therein). For instance, normal operators, binormal operators, Volterra operators, and Hankel operators are complex symmetric. And, notably, every $N \times N$ Toeplitz matrix, $N \geq 2$, is symmetric corresponding to the Toeplitz conjugation $C_{\text Toep}$ on $\mathbb{C}^N$ (see also Corollary \ref{cor: matrix symm}), where
\begin{equation}\label{eqn: can con FD}
C_{\text Toep} (z_1, z_2, \ldots, z_N) = (\overline{z}_{N}, \overline{z}_{N-1}, \ldots, \overline{z}_1).
\end{equation}
In fact, the starting example of complex symmetric operators in the seminal paper \cite[page 1286]{garcia1} is Toeplitz matrices with complex entries, an amplification of the classic work by Schur and Takagi \cite{Takagi}.

Besides, Toeplitz operators are one of the most important and most studied classical operators in mathematics including mathematical physics. The origin of Toeplitz operators (more specifically, Toeplitz matrices) can be traced back to the work of Otto Toeplitz at the beginning of the 20th century. However, the theory of Toeplitz operators has been profoundly influenced by the work of Brown and Halmos \cite{BH}, followed by a series of papers by L. Coburn, R. Douglas, I. Gohberg, D. Sarason, H. Widom, and many other mathematicians (see the monograph \cite{RGD}).

The purpose of this paper is to connect these two classes of operators. More specifically, here we aim to solve the natural question (also known to be an open question) of the characterizations of symmetricity of Toeplitz operators. Toeplitz operators are defined on the Hardy space $H^2(\D)$ \cite{rosenthal}. Recall that $H^2(\D)$ is the Hilbert space of all analytic functions $f = \sum_{n=0}^{\infty} a_n z^n$ on $\D = \{z \in \C: |z| < 1\}$ such that
\[
\|f\| := \Big(\sum_{n=0}^{\infty} |a_n|^2\Big)^{\frac{1}{2}} < \infty.
\]
We denote by $L^2(\mathbb{T})$ the Hilbert space of square integrable functions with respect to the normalized Lebesgue measure on the
unit circle $\mathbb{T}$. From the radial limits point of view (cf. Fatou’s theorem \cite{rosenthal}), one can identify $H^2(\D)$ with a closed subspace (denoted by $H^2(\D)$ again) of $L^2(\T)$ formed by all functions with vanishing negative Fourier coefficients. Also denote by $L^\infty(\T)$ the $C^*$-algebra of $\C$-valued essentially bounded Lebesgue measurable functions on $\T$.

\begin{definition}
The Toeplitz operator $T_\vp$ with symbol $\vp \in L^\infty(\T)$ is defined by $T_{\vp} f = P_{H^2(\D)} (\vp f)$ for all $f \in H^2(\D)$, or equivalently
\[
T_{\vp} = P_{H^2(\D)} L_{\vp}|_{H^2(\D)},
\]
where $L_\vp$ is the Laurent operator on $L^2(\T)$, and $P_{H^2(\D)}$ denotes the orthogonal projection of $L^2(\T)$ onto $H^2(\D)$.
\end{definition}

We are now in a position to state the central question of this paper more precisely which was also formally raised in \cite[Problem 4.5]{Guo} in the context of examples and the complexity of Toeplitz operators in the theory of symmetric operators:

\begin{question}\label{Q1}
Classify $\vp \in L^\infty(\T)$ such that $T_\vp$ is a complex symmetric operator.
\end{question}

In this paper, we give a solution to the above question. Note that up until now, this problem has been solved only by fixing a specific class of Toeplitz operators along with a specific class of conjugations (cf. \cite{Bu, Guo, Ko_Lee_SCTO}). Here we consider the above question in its full generality, that is, we deal with an arbitrary conjugation and an arbitrary Toeplitz operator at a time. Indeed, a closer look at Question \ref{Q1} reveals that the problem has two parts. First, and perhaps the most intricate one, is the precise representations of conjugations. This part is indeed relevant as a Toeplitz operator could be symmetric with respect to one conjugation but need not be with respect to another (see the examples following Theorem \ref{finiteToeplitzCS}). And even more, there are Toeplitz operators that are not symmetric with respect to any conjugations. The second question is the classification of symmetric Toeplitz operators in terms of a concrete conjugation.

We employ a couple of different approaches: First, we connect symmetric Toeplitz operators with the classical notion of $S$-Toeplitz operators \cite{RR-Book}. Let $S \in \clb(H^2(\D))$ be a unilateral shift. That is, there exists an orthonormal basis $\{f_n\}_{n \in \Z_+}$ of $H^2(\D)$ such (see Definition \ref{def: shift}) that
\[
S f_n = f_{n+1} \qquad (n \in \Z_+).
\]
When we wish to emphasize the orthonormal basis, we often call $S$  the \textit{shift corresponding to the basis $\{f_n\}_{n \in \Z_+}$}. The simplest example to give is the multiplication operator $M_z$ on $H^2(\D)$, where
\[
M_z f = z f \qquad (f \in H^2(\D)).
\]
An operator $T \in \clb(H^2(\D))$ is said to be $S$-Toeplitz if
\[
S^* T S = T.
\]
We quickly observe that symmetric Toeplitz operators are necessarily $S$-Toeplitz (see Proposition \ref{thm: symm S Toep}): If $C$ is a conjugation on $H^2(\D)$, and
\[
f_n:= C z^n \qquad (n \in \Z_+),
\]
then
\[
S:= C M_z C,
\]
is a shift corresponding to the orthonormal basis $\{f_n\}_{n\in \Z_+}$ of $H^2(\D)$. Moreover, if $T_\vp$, $\vp \in L^\infty(\T)$, is $C$-symmetric, then $T_\vp$ is $S$-Toeplitz.

However, the $S$-Toeplitz condition appears to be not sufficient to maintain the symmetricity of Toeplitz operators. To remedy this situation, we introduce canonical factorizations of conjugations which is a careful refinement of factorizations of unitary operators by Godi\v{c} and Lucenko \cite{Godic}, and  Garcia and Putinar \cite{garcia2}. We prove that for a conjugation $C$ on $H^2(\D)$, there is a unique unitary $U \in \clb(H^2(\D))$ such that
\[
C = U J_{H^2(\D)},
\]
which we call the \textit{canonical factorization} of $C$ (see Proposition \ref{Prop: C = UC =CU*} and Definition \ref{def: con factor}). Here
\[
J_{H^2(\D)} \Big(\sum_{n=0}^{\infty} a_n z^n\Big) = \sum_{n=0}^{\infty} \bar{a}_n z^n \qquad (\sum_{n=0}^{\infty} a_n z^n \in H^2(\D)),
\]
is the \textit{canonical conjugation} on $H^2(\D)$. Finally, in Theorem \ref{thm: S toep 1}, we connect symmetricity of $T_\vp$, $\vp \in L^\infty(\T)$, with formal Toeplitz matrices: Let $C \in \clc(H^2(\D))$. Then $T_\vp$ is $C$-symmetric if and only if
\[
[T_\vp]_{\{f_n\}_{n \in \Z_+}} = [T_\vp]_{\{z^n\}_{n \in \Z_+}}^t,
\]
where $f_n:= Uz^n = C z^n$, $n \in \Z_+$, and $C = U J_{H^2(\D)}$ is the canonical factorization of $C$. Since $[T_\vp]_{\{z^n\}_{n \in \Z_+}}^t$ is also a formal Toeplitz matrix, the above equality in particular implies that $T_\vp$ is $S$-Toeplitz. In other words:

\begin{theorem}\label{thm:intro 1}
$T_\vp$ is $C$-symmetric if and only if $T_\vp$ is $S$-Toeplitz and
\[
[T_\vp]_{\{f_n\}_{n \in \Z_+}} = [T_\vp]_{\{z^n\}_{n \in \Z_+}}^t.
\]
\end{theorem}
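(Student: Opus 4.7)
The plan is to use the canonical factorization $C = U J_{H^2(\D)}$, writing $J := J_{H^2(\D)}$ for brevity, and translate the $C$-symmetry relation $C T_\vp^* C = T_\vp$ directly into the stated matrix identity. Proposition \ref{thm: symm S Toep} supplies the $S$-Toeplitz clause for free once $C$-symmetry is available, so the heart of the argument is to show that $T_\vp$ is $C$-symmetric if and only if $[T_\vp]_{\{f_n\}} = [T_\vp]_{\{z^n\}}^t$.

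First I would unpack $C^2 = I$ in the form $(UJ)(UJ) = I$ to obtain $J U J = U^*$, equivalently the commutation identity $U J = J U^*$; this is what lets the anti-linear and linear pieces be shuffled past one another. Setting $f_n := U z^n$, the $(i,j)$-entries of the two matrices read
\[
\langle T_\vp f_j, f_i\rangle = \langle U^* T_\vp U z^j, z^i\rangle, \qquad \big([T_\vp]_{\{z^n\}}^t\big)_{i,j} = \langle T_\vp z^i, z^j\rangle.
\]
Using the anti-unitary identity $\langle J f, J g\rangle = \overline{\langle f, g\rangle}$ together with $J z^n = z^n$, I would rewrite the right-hand entry as
\[
\langle T_\vp z^i, z^j\rangle = \overline{\langle T_\vp^* z^j, z^i\rangle} = \langle J T_\vp^* z^j, z^i\rangle = \langle (J T_\vp^* J) z^j, z^i\rangle,
\]
noting that $J T_\vp^* J$ is genuinely linear.

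Since $\{z^n\}_{n \in \Z_+}$ is an orthonormal basis, equality of every $(i,j)$-entry is then equivalent to the operator identity $U^* T_\vp U = J T_\vp^* J$, i.e., $T_\vp = (U J) T_\vp^* (J U^*)$. Invoking $U J = J U^*$ once more gives $J U^* = U J = C$, so the right-hand side collapses to $C T_\vp^* C$. Combining this chain of equivalences with Proposition \ref{thm: symm S Toep} yields both implications of the theorem.

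The only delicate point is the anti-linear bookkeeping: one must extract $J U^* = U J$ correctly from $C^2 = I$ without mis-commuting a conjugation past a unitary, and track where the complex conjugate appears when passing from $\langle T_\vp z^i, z^j\rangle$ to expressions involving $T_\vp^*$. Once those identities are in hand, the remainder is a routine matrix-versus-operator translation, and the $S$-Toeplitz clause enters as a consequence of $C$-symmetry rather than as a genuinely independent hypothesis.
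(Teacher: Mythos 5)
Your argument is correct, but it does not follow the route the paper uses to prove this statement (Theorem \ref{thm: S toep 1}); instead it reproduces, in combined form, the proofs of Theorem \ref{thm: S toep 2} and Theorem \ref{thm: charac symm op}. The paper's proof of Theorem \ref{thm: S toep 1} is a coefficient-level computation: it expands $C T_\vp z^k = U J_{H^2(\D)}\bigl(\sum_{n\ge -k}\vp_n z^{n+k}\bigr) = \sum_{n\ge -k}\bar\vp_n f_{n+k}$, compares with $T_\vp^* C z^k = T_\vp^* f_k$, and reads the resulting identity $T_\vp^* f_k = \sum_{n\ge -k}\bar\vp_n f_{n+k}$ off as the transposed Toeplitz matrix condition. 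You instead stay at the operator level: you reduce $C$-symmetry to $U^* T_\vp U = J T_\vp^* J$ via $C = UJ = JU^*$, and then observe that $[T_\vp]_{\{f_n\}} = [U^* T_\vp U]_{\{z^n\}}$ entrywise while $[T_\vp]_{\{z^n\}}^t = [J T_\vp^* J]_{\{z^n\}}$. All the individual steps check out: the identity $UJ = JU^*$ does follow from $C^2 = I$ as you derive it, the anti-unitary identity $\langle Jf, Jg\rangle = \overline{\langle f,g\rangle}$ together with $Jz^n = z^n$ is used correctly, and the entry conventions match the paper's $[T]_{\{g_n\}} = (\langle Tg_j, g_i\rangle)_{i,j}$. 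What your route buys is generality and economy: nowhere do you use that $T_\vp$ is a Toeplitz operator, so your chain of equivalences proves the basis-free statement of Theorem \ref{thm: charac symm op} for arbitrary $T \in \clb(\clh)$, with the Toeplitz structure entering only to make the $S$-Toeplitz clause a formal consequence of the matrix equality. What the paper's computation buys in exchange is the explicit form of the transposed matrix (the entries $\bar\vp_n$ appearing along anti-diagonals in the $\{f_n\}$ basis), which it then reuses in Sections \ref{sec: Conj and Toeplitz} and \ref{sec: examples}; your argument, being operator-theoretic, does not produce that explicit data along the way.
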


Here $[T]_{\{g_n\}_{n \in \Z_+}}$ denotes the formal matrix representation of $T \in \clb(H^2(\D))$ with respect to a given orthonormal basis $\{g_n\}_{n \in \Z_+}$ of $H^2(\D)$. For instance, if $\vp  = \sum_{n=-\infty}^{\infty} \vp_n z^n \in L^\infty(\T)$, then we have
\[
[T_\vp]_{\{z^n\}_{n \in \Z_+}} =
\begin{bmatrix}	\vp_0 & \vp_{-1} & \vp_{-2} & \vp_{-3} & \hdots
\\
\vp_{1} & \vp_0 & \vp_{-1} & \vp_{-2} & \ddots
\\
\vp_{2} & \vp_{1} & \vp_0 & \vp_{-1} & \ddots
\\
\vp_{3} & \vp_{2} & \vp_{1} & \vp_0 & \ddots
\\
\vdots&\vdots&\vdots&\vdots&\ddots
\end{bmatrix},
\]
the familiar Toeplitz matrix representation of the Toeplitz operator $T_\vp$. We believe that the perspective of $S$-Toeplitz operators in the theory of the symmetric operators is completely new.

Our second approach to the characterization of symmetric Toeplitz operators follows the line of existing routes and substantially improves and unifies all the known partial results. We make use of coordinate-free representations of conjugations. More specifically, given a Hilbert space $\clh$, we denote by $B_{\clh}$ the set of all ordered orthonormal bases of $\clh$. We also denote by ($\clb_a(\clh)$) $\cll_a(\clh)$ the space of (bounded) anti-linear operators on $\clh$. The following is our second classification of conjugations (see Proposition \ref{conju_Charac_theorem}), which also unifies all the existing results about representations of conjugations: Let $C \in \cll_a(\clh)$. Then $C \in \clc(\clh)$ if and only if there exists $\{g_n\}_{n \in \Lambda} \in B_{\clh}$ (in fact, $C g_n = g_n$ for all $n \in \Z_+$) such that
\[
C (\sum_{n} a_n\tau_n) = \sum_{n} \sum_{m}\bar{a}_n c_{n,m}^{(\tau)} \tau_m,
\]
for all $\tau = \{\tau_n\}_{n \in \Lambda} \in B_{\clh}$ and $\sum_{n} a_n\tau_n \in \clh$, $a_n \in \C$, where
\[
c_{n,m}^{(\tau)}=\displaystyle\sum_{k}\langle g_k,\tau_n\rangle \langle g_k,\tau_m\rangle \qquad (m, n \in \Lambda).
\]
It is evident that $|c_{n,m}^{(\tau)}| \leq 1$ and
\[
c_{n,m}^{(\tau)} = c_{m,n}^{(\tau)} \qquad (m, n \in \Lambda).
\]
With this classification in hand, we again turn to Toeplitz operators on $H^2(\D)$. We first fix the canonical basis of $H^2(\D)$ as
\[
\zeta := \{z^n\}_{n \geq 0} \in B_{H^2(\D)}.
\]
The following summarizes our characterizations of symmetric Toeplitz operators:

\begin{theorem}\label{thm: summary}
Let $C$ be a conjugation on $H^2(\D)$. Suppose $C =U J_{H^2(\D)}$ is the canonical factorization of $C$. Define the unilateral shift $S$ on $H^2(\D)$ by $S := CM_zC$. Let
\[
u_{n,m}= \langle U z^n, z^m \rangle,
\]
and
\[
f_n:= Uz^n,
\]
for all $m, n \in \Z_+$. If $\vp = \sum_{n=-\infty}^{\infty} \vp_n z^n \in L^\infty(\T)$, then the following are equivalent:
\begin{enumerate}
\item $T_\vp$ is $C$-symmetric.
\item $T_\vp$ is $S$-Toeplitz and
\[
[T_\vp]_{\{f_n\}_{n \in \Z_+}} = [T_\vp]_{\{z^n\}_{n \in \Z_+}}^t.
\]
\item $U^*T_{\vp}U$ is a Toeplitz operator and
\[
[U^*T_{\vp}U]_{\{z^n\}_{n \in \Z_+}} = [T_{\vp}]^{t}_{\{z^n\}_{n \in \Z_+}}.
\]
\item For all $m, n \in \Z_+$, we have
\[
\vp_{m-n} = \sum_{i,j} u_{m,i} {\vp}_{i-j}\overline{u_{j,n}}.
\]
\item For all $j,k\in \Z_+$, we have
\[
\sum_{n=0}^{\infty}\overline{\vp_{n-k}}c_{n,j}^{(\zeta)} = \sum_{n=1}^{\infty}\overline{\vp_n} c_{k,n+j}^{(\zeta)} + \sum_{l=0}^{j}\overline{\vp_{-l}} c_{k,j-1}^{(\zeta)},
\]
where  
\[
c_{n,m}^{(\zeta)} = \displaystyle\sum_{k}\langle g_k, z^n \rangle \langle g_k, z^m \rangle \qquad (m, n\in \Z_+),
\]
and $C g_p = g_p$, $p \in \Z_+$, for some $\{g_n\}_{n \in \Z_+} \in B_{H^2(\D)}$.

\item For all $j,k\in \Z_+$, we have
\[
\sum_{n=0}^{\infty}\overline{\vp_{n-k}}u_{n,j} = \sum_{n=1}^{\infty}\overline{\vp_n}u_{k,n+j} + \sum_{l=0}^{j}\overline{\vp_{-l}}u_{k,j-l}.
\]
\end{enumerate}
\end{theorem}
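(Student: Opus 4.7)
The proof plan is to leverage the canonical factorization $C = U J_{H^2(\D)}$ throughout, together with the key identity $J_{H^2(\D)} z^n = z^n$, which yields $C z^n = U z^n = f_n$ for all $n \in \Z_+$. The equivalence $(1) \Leftrightarrow (2)$ is already delivered by Theorem \ref{thm:intro 1}. For $(2) \Leftrightarrow (3)$, note that since $f_n = U z^n$, the unilateral shift $S = C M_z C$ corresponding to $\{f_n\}$ equals $U M_z U^*$, so $S^* T_\vp S = T_\vp$ is equivalent to $M_z^* (U^* T_\vp U) M_z = U^* T_\vp U$, which by the Brown--Halmos theorem characterizes $U^* T_\vp U$ as a Toeplitz operator; moreover, the matrix identities in (2) and (3) coincide because $\langle T_\vp f_j, f_i \rangle = \langle U^* T_\vp U z^j, z^i \rangle$.

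For $(1) \Leftrightarrow (4)$, expand the operator identity $C T_\vp^* C = T_\vp$ in matrix entries against $\{z^n\}$: using $C = U J_{H^2(\D)}$, $J_{H^2(\D)} z^n = z^n$, $T_\vp^* = T_{\bar\vp}$, and the anti-linearity of $C$, one writes $\langle C T_\vp^* C z^n, z^m \rangle$ as a double sum in the coefficients $u_{k,l}$ and $\vp_k$, and setting this equal to $\vp_{m-n}$ yields the coefficient identity in (4) after routine relabeling. For $(1) \Leftrightarrow (6)$, the key observation is that (1) is equivalent to $T_\vp^* C = C T_\vp$; applying both sides to $z^k$ and using $T_\vp z^k = \sum_{m \geq 0} \vp_{m-k} z^m$, $T_\vp^* z^n = \sum_{j \geq 0} \overline{\vp_{n-j}} z^j$, and $C z^n = \sum_m u_{n,m} z^m$ together with the anti-linearity of $C$, comparison of the coefficients of $z^j$ produces
\[
\sum_{n \geq 0} \overline{\vp_{n-k}}\, u_{n,j} = \sum_{n \geq 0} u_{k,n}\, \overline{\vp_{n-j}},
\]
and splitting the right-hand sum according to whether $n - j \geq 1$ or $0 \leq n \leq j$ converts it into the two-term right-hand side of (6). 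Finally, $(5) \Leftrightarrow (6)$ is a pure change of notation: by Proposition \ref{conju_Charac_theorem} applied to $\tau = \zeta = \{z^n\}_{n \in \Z_+}$ together with $C z^n = U z^n$, we have $c_{n,m}^{(\zeta)} = \langle C z^n, z^m \rangle = u_{n,m}$, so (5) and (6) are the same identity.

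The main technical delicacy is the bookkeeping in the equivalence $(1) \Leftrightarrow (6)$: because $T_\vp$ and $T_\vp^*$ are compressions to $H^2(\D)$, the projection $P_{H^2(\D)}$ truncates the actions of $\vp$ and $\bar\vp$ on monomials to non-negative powers, and passing this truncation through the anti-linearity of $C$ and then splitting the resulting series so as to isolate the $\overline{\vp_n}$ terms for $n \geq 1$ from the $\overline{\vp_{-l}}$ terms for $0 \leq l \leq j$ takes some care. Everything else reduces to a systematic translation of $C T_\vp^* C = T_\vp$ into matrix or coefficient form via the canonical factorization and the Brown--Halmos characterization.
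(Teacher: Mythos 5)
Your proposal is correct and follows essentially the same route as the paper, which assembles this summary theorem from Theorem \ref{thm: S toep 1}, Theorem \ref{thm: S toep 2}, Corollary \ref{cor: phi and u}, and Theorem \ref{thm_main}; your only (harmless) streamlining is to link (2) and (3) directly via $S=UM_zU^*$ and Brown--Halmos, and to obtain (5) from (6) through the identification $c^{(\zeta)}_{n,m}=\langle Cz^n,z^m\rangle=u_{n,m}$ rather than reproving Theorem \ref{thm_main}. Note only that the coefficient identity your computation actually produces for item (4) is $\vp_{m-n}=\sum_{i,j}u_{m,j}\vp_{i-j}\overline{u_{i,n}}$ as in Corollary \ref{cor: phi and u} (item (4) as printed transposes the $u$-indices), and that $c^{(\zeta)}_{k,j-1}$ in item (5) should read $c^{(\zeta)}_{k,j-l}$ --- both typos in the statement, not gaps in your argument.
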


In addition to the above and following the wish list of \cite[Section 10]{garcia1}, we also present results on symmetric operators in several variables. We first introduce $S$-Toeplitz operators in several variables and then prove similar characterizations of symmetric Toeplitz operators on the Hardy space over the open unit polydisc in $\mathbb{C}^n$.

It is worth pointing out that there have been many attempts to provide (partial) answers to Question \ref{Q1} (for instance, see \cite{Bu, Ko_Lee_SCTO, Li}). However, our approach and objective are somehow different. As already pointed out, the key to our analysis is concrete representations and canonical factorizations of conjugations. Our answer to Question \ref{Q1} unifies all known partial results in the literature.

Moreover, our approach yields a new characterizations of complex symmetric operators: Given a basis $\{e_n\}_{n \in \Lambda} \in B_\clh$, we define the conjugation $J_{\clh}$ on $\clh$ by (see Definition \ref{eqn: can conjug})
\[
J_{\clh} (\sum_n a_n e_n) = \sum_n \bar{a}_n e_n,
\]
for all $\sum_n a_n e_n \in \clh$. In this setting, if $C$ is a conjugation on $\clh$, then there is a unique unitary $U \in \clb(\clh)$ such that
\begin{equation}\label{eqn: intro c uj}
C=U J_\clh.
\end{equation}
Theorem \ref{thm: charac symm op} then states:

\begin{theorem}
Let $\clh$ be a Hilbert space, $\{e_n\}_{n\in \Lambda} \in B_\clh$, and let $C\in \clc(\clh)$. Then $T\in \mathcal{B}(\clh)$ is $C$-symmetric if and only if
\[
[T]_{\{e_n\}_{n\in \Lambda}}^t = [U^*TU]_{\{e_n\}_{n\in \Lambda}},
\]
where $C=UJ_\clh$ is as in \eqref{eqn: intro c uj}.
\end{theorem}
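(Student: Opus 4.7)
The plan is to unravel the definition $CT^{*}C=T$ at the level of matrix entries and to use the fact that, in the canonical factorization $C=UJ_{\clh}$, the canonical conjugation $J_{\clh}$ fixes the very basis $\{e_n\}_{n\in\Lambda}$ that was used to define it. So $J_{\clh}e_n=e_n$ and hence $Ce_n=Ue_n$ for every $n$, which will convert the conjugation $C$ into the unitary $U$ once one has tested the operator identity against the basis vectors.

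The key computational tool is the following anti-linear polarization identity for a conjugation $C\in\clc(\clh)$: from $\|Cf\|=\|f\|$ we obtain $\langle Cu,Cv\rangle=\langle v,u\rangle$, and combined with $C^{2}=I$ this yields $\langle Cw,y\rangle=\langle Cy,w\rangle$ for all $w,y\in\clh$. Applying this with $w=T^{*}Cx$ gives
\[
\langle CT^{*}Cx,y\rangle=\langle Cy,T^{*}Cx\rangle=\langle TCy,Cx\rangle,
\]
for all $x,y\in\clh$. First I would establish this identity as a lemma-like step, since it is the one place where the anti-linearity of $C$ genuinely enters the argument.

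Next, I would specialize to $x=e_{i}$, $y=e_{j}$ and use $Ce_n=Ue_n$ to rewrite the right-hand side as $\langle TUe_{j},Ue_{i}\rangle=\langle U^{*}TUe_{j},e_{i}\rangle$. By the totality of $\{e_n\}_{n\in\Lambda}$ and the sesquilinearity of the inner product, the operator identity $CT^{*}C=T$ is equivalent to the scalar identity $\langle CT^{*}Ce_{i},e_{j}\rangle=\langle Te_{i},e_{j}\rangle$ for every $i,j\in\Lambda$, so in view of the previous step this is equivalent to
\[
\langle U^{*}TUe_{j},e_{i}\rangle=\langle Te_{i},e_{j}\rangle\qquad(i,j\in\Lambda).
\]
The left side is the $(i,j)$-entry of $[U^{*}TU]_{\{e_n\}_{n\in\Lambda}}$, while the right side is $\overline{\langle e_{j},Te_{i}\rangle}$'s counterpart, namely the $(i,j)$-entry of $[T]^{t}_{\{e_n\}_{n\in\Lambda}}$. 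Therefore the two matrices coincide, which is exactly the conclusion.

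The proof is essentially a direct calculation rather than a proof with a genuine obstacle; the only subtlety is keeping the ordering of $i,j$ straight in the matrix representation convention $[T]_{\{e_n\}_{n\in\Lambda}}=(\langle Te_{j},e_{i}\rangle)_{i,j}$ so that the transpose lands on the correct side, and checking that the established canonical factorization $C=UJ_{\clh}$ (guaranteed by Proposition \ref{Prop: C = UC =CU*}) truly satisfies $J_{\clh}e_n=e_n$ — which is immediate from the definition of $J_{\clh}$ relative to the chosen basis.
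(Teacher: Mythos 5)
Your proof is correct and follows essentially the same route as the paper: the paper first rewrites $C$-symmetry as the operator identity $U^*TU = J_{\clh}T^*J_{\clh}$ via the canonical factorization and then evaluates matrix entries using $J_{\clh}e_n=e_n$, while you evaluate $\langle CT^*Ce_i,e_j\rangle$ directly using $\langle Cw,y\rangle=\langle Cy,w\rangle$ and $Ce_n=Ue_n$ — the same two ingredients, arriving at the same scalar identity $\langle U^*TUe_j,e_i\rangle=\langle Te_i,e_j\rangle$. The bookkeeping of indices against the convention $[T]_{\{e_n\}}=(\langle Te_j,e_i\rangle)_{i,j}$ is handled correctly.
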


The rest of the paper is organized as follows. In the following section, we will discuss a method of factorizations of conjugations. Along the way, we will introduce the necessary terminology and record some observations that will be useful in the sequel. Here we also present a pair of characterizations of conjugations.

In Section \ref{sec: rep of conj}, we present our third and final characterization of conjugations. Such characterizations essentially generalize and unify all the existing results concerning representations of conjugations.

Section \ref{sec: S Toeplitz} deals with the notion of $S$-Toeplitz operators. We prove that a symmetric Toeplitz operator is necessarily $S$-Toeplitz. The converse, however, does not hold in general. Section \ref{sec: S Toeplitz and symm} identifies the missing link and proves that the converse holds if the matrix representation of the Toeplitz operator corresponding to the basis of the shift $S$ equals the transpose of the matrix representation of the given Toeplitz operator.
	
In Section \ref{sec: Conj and Toeplitz}, we present our final characterization of symmetric Toeplitz operators. Here we follow the analysis of Section \ref{sec: rep of conj}. The key is the representations of conjugations on the Hardy space with respect to the canonical basis. Section \ref{sec: polydisc} classifies symmetric Toeplitz operators on the Hardy space over the unit polydisc. One of the keys is the notion of $S$-Toeplitz operators in several variables.

In Section \ref{sec: composition}, we connect complex symmetric Toeplitz operators and a class of composition operators. We construct a conjugation via a unitary weighted composition operator and discuss the symmetricity of Toeplitz operators corresponding to composition-based conjugations.

Section \ref{sec: examples} consists of more assorted examples of symmetric Toeplitz operators. The appendix, Section \ref{sec: appendix}, at the end of the paper contains some results on intertwiners that are not directly related to Toeplitz operators, but fit well in the context of symmetric operators and may be of independent interest.

\newsection{Factorizations of conjugations}

In this section, we describe natural methods of factorizations of conjugations on Hilbert spaces. This will be a key tool in our first characterizations of symmetric Toeplitz operators. Some of the results of this section may be of independent interest and may have other applications.

We begin with some notation. Given a Hilbert space $\clh$, we fix an element $\{e_n\}_{n\in \Lambda} \in B_{\clh}$, and we call it the \textit{canonical basis} of $\clh$. Depending on the dimension of $\clh$, the index set $\Lambda$ is either a finite set or a countably infinite set. Recall also that $B_{\clh}$ is the set of all ordered orthonormal bases of $\clh$. The choice of the canonical basis $\{e_n\}_{n\in \Lambda} \in B_{\clh}$ might depend on the class of operators under consideration. For instance, in the context of Toeplitz operators on $H^2(\D)$, we set, by convention
\[
\zeta = \{z^n\}_{n\in \Z_+} \in B_{H^2(\D)},
\]
the canonical basis of $H^2(\D)$.

\begin{definition}\label{eqn: can conjug}
Let $\clh$ be a Hilbert space. Suppose $\{e_n\}_{n\in \Z_+} \in B_{\clh}$ is the canonical basis of $\clh$. The canonical conjugation of $\clh$ is the conjugation $J_{\clh}$ defined by
\[
J_{\clh} (\sum_n a_n e_n) = \sum_n \bar{a}_n e_n,
\]
for all $\sum_n a_n e_n \in \clh$.
\end{definition}

In the case that $\clh =H^2(\D)$, it can be easily proved that
\begin{equation}\label{eqn: CH Mz comm}
J_{H^2(\D)} M_z = M_z J_{H^2(\D)},
\end{equation}
where $M_z$ denotes the multiplication operator by the coordinate function $z$ on $H^2(\D)$, that is
\[
(M_z f)(w) = w f(w) \qquad (w \in \D).
\]
Also note that in view of the polarization identity, a map $C \in \clb_a(\clh)$ is isometric if and only if (cf. \cite[Page 4]{garcia_prodan_putinar})
\[
\langle Cx, Cy \rangle = \langle y, x \rangle \qquad (x, y\in \clh).
\]
In particular, if $C$ is a conjugation, then
\[
\begin{split}
\langle Cx, y \rangle & = \langle Cx, C^2y \rangle
\\
& = \langle C x, C(Cy) \rangle
\\
& = \langle Cy, x \rangle,
\end{split}
\]
and hence
\begin{equation}\label{eqn: conj inner prod}
\langle Cx, y \rangle = \langle C y, x \rangle \qquad (x, y \in \clh).
\end{equation}

We also need one of the most elementary facts about conjugations \cite[Lemma 1]{garcia1}:

\begin{lemma}\label{conjuONB}
Let $C \in \cll_a(\clh)$. Then $C$ is a conjugation if and only if there exists $\{f_n\}_{n \in \Lambda} \in B_{\clh}$ such that $C f_n = f_n$ for all $n \in \Lambda$.
\end{lemma}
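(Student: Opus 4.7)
The strategy for the forward direction is to construct the desired basis from the real-linear fixed-point set of $C$, namely
\[
\clh_C := \{x \in \clh : Cx = x\}.
\]
First, I would verify that $\clh_C$ is a closed real-linear subspace of $\clh$, which is immediate from the anti-linearity, involutivity, and continuity (via the isometry assumption) of $C$. The crucial observation is that the Hilbert inner product is real-valued on $\clh_C$: specializing \eqref{eqn: conj inner prod}, namely $\langle Cx, y\rangle = \langle Cy, x\rangle$, to $x, y \in \clh_C$ (so that $Cx = x$ and $Cy = y$) yields $\langle x, y\rangle = \langle y, x\rangle$, which forces $\langle x, y\rangle \in \R$. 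Hence $\clh_C$, equipped with the restricted inner product, is a real Hilbert space.

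Next I would establish the real direct sum decomposition $\clh = \clh_C \oplus i\,\clh_C$. Given any $x \in \clh$, the pieces $a := \tfrac{1}{2}(x + Cx)$ and $c := \tfrac{1}{2i}(x - Cx)$ both lie in $\clh_C$ (a quick check using $C^2 = I$ and anti-linearity of $C$), and $x = a + ic$. The intersection $\clh_C \cap i\,\clh_C$ is trivial: if $x \in \clh_C$ is also of the form $iy$ for some $y \in \clh_C$, then $x = Cx = C(iy) = -iCy = -iy = -x$, forcing $x = 0$. Now pick any real-orthonormal basis $\{f_n\}_{n \in \Lambda}$ of $\clh_C$; by construction, $Cf_n = f_n$ for every $n \in \Lambda$. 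I would finish this direction by checking that $\{f_n\}_{n \in \Lambda}$ is also a complex-orthonormal basis of $\clh$: the orthonormality relations transfer verbatim because the inner product is real on $\clh_C$, and totality follows by expanding both $a$ and $c$ in the real basis $\{f_n\}_{n \in \Lambda}$ in the decomposition $x = a + ic$.

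For the converse, given an orthonormal basis $\{f_n\}_{n \in \Lambda}$ with $Cf_n = f_n$, I would introduce the candidate conjugation $\tilde C$ associated to that basis, $\tilde C\bigl(\sum_n a_n f_n\bigr) := \sum_n \bar a_n f_n$, which is manifestly isometric on finite linear combinations and therefore extends continuously to a genuine conjugation on $\clh$. Since the given $C$ is anti-linear and agrees with $\tilde C$ on each $f_n$, the two operators coincide on the dense finite span of $\{f_n\}_{n \in \Lambda}$; combined with the continuity of $\tilde C$, this forces $C = \tilde C$, so $C$ itself is a conjugation. I expect the principal obstacle to be pinning down the reality of $\langle \cdot, \cdot\rangle$ on $\clh_C$, since this is precisely what promotes a real-orthonormal basis of $\clh_C$ into a complex-orthonormal basis of $\clh$; fortunately, it is a one-line consequence of \eqref{eqn: conj inner prod}, which is already on the table.
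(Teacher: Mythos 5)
The paper offers no proof of this lemma --- it is imported verbatim from Garcia and Putinar \cite[Lemma 1]{garcia1} --- so your argument stands or falls on its own. Your forward direction is the standard proof and it is complete: the fixed-point set $\clh_C$ is a closed real subspace on which the inner product is real-valued by \eqref{eqn: conj inner prod}, the identities $C\bigl(\tfrac{1}{2}(x+Cx)\bigr)=\tfrac{1}{2}(x+Cx)$ and $C\bigl(\tfrac{1}{2i}(x-Cx)\bigr)=\tfrac{1}{2i}(x-Cx)$ give the real decomposition $\clh=\clh_C\oplus i\,\clh_C$, and a real-orthonormal basis of $\clh_C$ is then a complex-orthonormal basis of $\clh$ fixed by $C$. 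All the verifications you indicate (reality of $\langle\cdot,\cdot\rangle$ on $\clh_C$, triviality of $\clh_C\cap i\,\clh_C$, totality via the expansions of the two real parts) go through.

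The converse as written has a genuine gap at the last step. You correctly note that $C$ and the conjugation $\tilde C$ agree on the finite linear span of $\{f_n\}_{n\in\Lambda}$, which is a \emph{proper} dense subspace when $\Lambda$ is infinite, and you then conclude $C=\tilde C$ ``combined with the continuity of $\tilde C$.'' Continuity of $\tilde C$ alone does not suffice: to upgrade agreement on a dense subspace to agreement everywhere you need \emph{both} maps to be continuous, and the hypothesis is only $C\in\cll_a(\clh)$ --- the paper deliberately distinguishes $\cll_a(\clh)$ from the bounded class $\clb_a(\clh)$, so continuity of $C$ is not on the table. Taken literally the converse is in fact false: using a Hamel basis one can extend $\tilde C$ restricted to the finite span to a discontinuous anti-linear map on all of $\clh$ which fixes every $f_n$ but is not an isometry. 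So you should either read boundedness of $C$ into the hypothesis (which is surely the intended interpretation, and then your argument closes), or state explicitly that $C$ is assumed to act on infinite sums by $C\bigl(\sum_n a_n f_n\bigr)=\sum_n \bar a_n f_n$, after which isometry and involutivity are immediate.
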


Recall that $\cll_a(\clh)$ ($\clb_a(\clh)$) denotes the space of all anti-linear (bounded anti-linear) operators on $\clh$.

In view of the above lemma, we introduce:

\begin{definition}\label{def: conj corres}
Let $C \in \clc(\clh)$. We say that $C$ is a conjugation corresponding to $\{f_n\}_{n\in \Lambda} \in B_{\clh}$ if $C f_n = f_n$ for all $n \in \Lambda$.
\end{definition}

Up to (linear) unitary equivalence, canonical conjugation is the only conjugation on a Hilbert space:

\begin{lemma}\label{lemma1}
Let $C \in \cll_a(\clh)$. Then $C$ is a conjugation if and only if there exists a unitary $U \in \clb(\clh)$ such that $C = U^* J_{\clh} U$.
\end{lemma}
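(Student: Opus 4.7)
The proof splits naturally into the two directions, and the forward direction is where the substantive content lies; the converse is essentially a verification of the three defining properties.

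For the forward direction, the plan is to reduce the statement to Lemma \ref{conjuONB}. Suppose $C$ is a conjugation. Lemma \ref{conjuONB} provides an orthonormal basis $\{f_n\}_{n \in \Lambda} \in B_{\clh}$ fixed by $C$, that is, $C f_n = f_n$ for all $n \in \Lambda$. Let $\{e_n\}_{n \in \Lambda}$ denote the canonical basis of $\clh$, and define $U \in \clb(\clh)$ to be the unique linear extension of the map $f_n \mapsto e_n$. Since $U$ sends one orthonormal basis onto another, it is unitary, with $U^* e_n = f_n$ for all $n \in \Lambda$. The identity $C = U^* J_\clh U$ then needs only to be verified on the basis $\{f_n\}$: for each $n$, the right-hand side sends $f_n$ to $U^* J_\clh e_n = U^* e_n = f_n$, matching $C f_n$. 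For a general vector $x = \sum_n a_n f_n$, anti-linearity of both sides combined with the agreement on basis vectors gives $C x = \sum_n \bar a_n f_n = U^* J_\clh U x$.

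For the converse, assume $C = U^* J_\clh U$ with $U \in \clb(\clh)$ unitary. Anti-linearity is immediate: $U$ and $U^*$ are linear, while $J_\clh$ is anti-linear by Definition \ref{eqn: can conjug}, so the composition is anti-linear. For involutivity, compute
\[
C^2 = (U^* J_\clh U)(U^* J_\clh U) = U^* J_\clh (UU^*) J_\clh U = U^* J_\clh^2 U = U^* U = I_\clh,
\]
using $UU^* = I_\clh$ and $J_\clh^2 = I_\clh$. Isometry follows from $\|Cx\| = \|U^* J_\clh U x\| = \|J_\clh U x\| = \|U x\| = \|x\|$ for all $x \in \clh$, since $U$, $U^*$, and $J_\clh$ are all isometries.

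There is no real obstacle here; the only technical care required is in handling the composition of anti-linear with linear maps (in particular, noticing that $J_\clh$ commutes past the inner $UU^* = I$ only because the latter is the identity, not because $J_\clh$ commutes with arbitrary linear operators). Given Lemma \ref{conjuONB}, the lemma amounts to the coordinate interpretation of that basis theorem: every conjugation is unitarily equivalent, in the anti-linear sense, to the model conjugation $J_\clh$ attached to the canonical basis.
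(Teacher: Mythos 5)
Your proof is correct and follows essentially the same route as the paper: both invoke Lemma \ref{conjuONB} to obtain a basis $\{f_n\}$ fixed by $C$, define the unitary $U$ by $f_n \mapsto e_n$, and verify $C = U^* J_{\clh} U$ (equivalently $J_{\clh}U = UC$); your write-up merely spells out the routine verifications that the paper leaves as "easy to check."
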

\begin{proof}
If $C = U^* J_{\clh} U$, then it is easy to see that $C$ satisfies all the conditions of conjugations. For the reverse direction, by Lemma \ref{conjuONB}, there exists $\{f_n\} \in B_{\clh}$ such that $C f_n = f_n$ for all $n$. For each $x\in \clh$, we know that $x = \sum_n \langle x, f_n\rangle f_n$. Then
\[
Ux= \sum_n \langle x, f_n\rangle e_n \qquad (x \in \clh),
\]
defines a unitary $U \in \clb(\clh)$. It is now easy to check that $J_{\clh} U = UC$.
\end{proof}

The same is true up to anti-linear unitary equivalence. In other words, if we define $U$ on $\clh$ by
\[
Ux = \sum_n \langle f_n, x \rangle e_n \qquad (x \in \clh),
\]
then $U$ is an anti-unitary operator, and hence $UC = J_{\clh} U$, that is, $C$ is anti-unitarily equivalent to $J_{\clh}$. This and Lemma \ref{lemma1} clearly imply that all conjugations on a Hilbert space are unitarily equivalent:

\begin{corollary}\label{cor: all conj are same}
Conjugations are unitarily as well as anti-unitarily equivalent.
\end{corollary}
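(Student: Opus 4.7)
The plan is to deduce both assertions directly from Lemma \ref{lemma1} together with the anti-linear variant recorded in the paragraph immediately preceding the corollary. Taken together, these say that every $C \in \clc(\clh)$ admits two canonical forms at once: $C = U^* J_\clh U$ for some linear unitary $U \in \clb(\clh)$, and $C = W^{-1} J_\clh W$ for some anti-unitary $W$ on $\clh$. The corollary then follows by transitivity, after composing the intertwiners supplied by these two representations.

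For the unitary equivalence I would take arbitrary $C_1, C_2 \in \clc(\clh)$, use Lemma \ref{lemma1} to produce unitaries $U_1, U_2 \in \clb(\clh)$ with $C_i = U_i^* J_\clh U_i$ for $i=1,2$, and set $V := U_1^* U_2$, which is again unitary. A one-line substitution then gives
\[
V^* C_1 V = U_2^* U_1 (U_1^* J_\clh U_1) U_1^* U_2 = U_2^* J_\clh U_2 = C_2.
\]
A short preliminary check that conjugation of an anti-linear map by a linear unitary preserves anti-linearity, involutivity, and isometry confirms that $V^* C_1 V$ is automatically a conjugation. For the anti-unitary equivalence I would deliberately mix the two representations, writing $C_1 = U_1^* J_\clh U_1$ with $U_1$ linear unitary and $C_2 = W_2^{-1} J_\clh W_2$ with $W_2$ anti-unitary, and then eliminate $J_\clh$ to obtain
\[
C_2 = (W_2^{-1} U_1)\, C_1\, (U_1^* W_2).
\]
Since composing an anti-unitary with a linear unitary yields an anti-unitary, the operator $A := W_2^{-1} U_1$ is anti-unitary with $A^{-1} = U_1^* W_2$, and therefore $C_2 = A C_1 A^{-1}$, which is the desired anti-unitary equivalence.

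The only delicate point I foresee is the bookkeeping of linear versus anti-linear parities under composition. In particular, one must observe that two anti-linear representatives compose to give a linear intertwiner, which is exactly why the second step mixes one linear and one anti-linear form rather than using $W_1^{-1} W_2$. Once these parity rules are respected, the corollary reduces to the displayed one-line computations.
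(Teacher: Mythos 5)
Your proof is correct and follows essentially the same route as the paper: both conjugations are first reduced to the canonical conjugation $J_{\clh}$ (unitarily via Lemma \ref{lemma1}, anti-unitarily via the anti-unitary intertwiner constructed in the paragraph preceding the corollary), and the two equivalences are then composed. The paper leaves the composition step implicit, so your explicit parity bookkeeping---in particular mixing one linear and one anti-linear representative to obtain the anti-unitary equivalence, since two anti-unitaries compose to a unitary---is a welcome clarification rather than a genuinely different argument.
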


We now justify the canonicity of canonical conjugations, which also yields the first characterization of conjugations in this paper.

\begin{proposition}\label{Prop: C = UC =CU*}
Let $\clh$ be a Hilbert space, and let $C \in \cll_a(\clh)$. Then $C$ is a conjugation if and only if there is a unique unitary $U$ on $\clh$ such that
\[
C = U J_{\clh} = J_{\clh} U^*.
\]
\end{proposition}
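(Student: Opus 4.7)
The plan is to produce the required unitary explicitly, then verify uniqueness and the converse direction by routine algebraic manipulation. For the forward direction, assume $C$ is a conjugation and set $U := C J_{\clh}$. Since the composition of two anti-linear maps is linear, $U \in \clb(\clh)$. To compute $U^*$ I would use identity \eqref{eqn: conj inner prod} applied to both conjugations $C$ and $J_{\clh}$: for any $x,y \in \clh$,
\[
\langle U x, y \rangle = \langle C J_{\clh} x, y \rangle = \langle C y, J_{\clh} x \rangle = \overline{\langle J_{\clh} x, C y \rangle} = \overline{\langle J_{\clh} C y, x \rangle} = \langle x, J_{\clh} C y \rangle,
\]
so $U^* = J_{\clh} C$. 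Because $C^2 = J_{\clh}^2 = I$, we immediately get $UU^* = C J_{\clh}^2 C = I$ and $U^* U = J_{\clh} C^2 J_{\clh} = I$, so $U$ is unitary. The identities $U J_{\clh} = C J_{\clh}^2 = C$ and $J_{\clh} U^* = J_{\clh}^2 C = C$ then give the desired factorization.

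For uniqueness, suppose $U_1, U_2 \in \clb(\clh)$ are unitaries with $U_1 J_{\clh} = U_2 J_{\clh} = C$. Right-multiplying by $J_{\clh}$ and using $J_{\clh}^2 = I$ yields $U_1 = U_2$ at once; the second equation $C = J_{\clh} U^*$ is then forced by the first together with $C^2 = I$.

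For the reverse direction, assume $C = U J_{\clh} = J_{\clh} U^*$ for some unitary $U$. Then $C$ is anti-linear as the composition of a unitary with $J_{\clh}$, and isometric because both factors are isometries. Finally,
\[
C^2 = (U J_{\clh})(J_{\clh} U^*) = U J_{\clh}^2 U^* = U U^* = I,
\]
so $C$ is a conjugation.

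I do not anticipate a genuine obstacle here; the only nontrivial bookkeeping is tracking anti-linearity carefully when computing $U^*$, which is handled cleanly through \eqref{eqn: conj inner prod} applied to both $C$ and $J_{\clh}$. The heart of the argument is simply that the two involutions $C$ and $J_{\clh}$ compose to a linear isometric involution's worth of unitary information, and the formula $U = C J_{\clh}$ is both forced and sufficient.
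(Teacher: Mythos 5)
Your proposal is correct and follows the same route as the paper: both directions hinge on the observation that $U := C J_{\clh}$ is a unitary with $U^* = J_{\clh} C$, which the paper asserts without detail and you verify explicitly via \eqref{eqn: conj inner prod}. The uniqueness argument is likewise identical.
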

\begin{proof}
Let $C \in \cll_a(\clh)$. If $C = U J_{\clh} = J_{\clh} U^*$ for some unitary $U \in \clb(\clh)$, then clearly $C$ is a conjugation. On the other hand, if $C$ is a conjugation, then the required equality follows from the fact that $U:= C J_{\clh}$ is a unitary on $\clh$.

\noindent If $\tilde U \in \clb(\clh)$ is a unitary such that $C = \tilde U J_{\clh}$, then $\tilde U J_{\clh} = U J_{\clh}$, and hence $\tilde U = U$. This proves the uniqueness part and completes the proof of the proposition.
\end{proof}

This observation is essentially a refinement of the classical factorizations of unitaries by Godi\v{c} and Lucenko \cite{Godic} (also see Garcia and Putinar \cite[Lemma 1]{garcia2}).

The above result motivates us to introduce canonical factorizations of conjugations.

\begin{definition}\label{def: con factor}
Let $\clh$ be a Hilbert space, and let $C \in \clc(\clh)$. Then
\[
C = U J_{\clh},
\]
is called the canonical factorization of $C$, where $U \in \clb(\clh)$ is a unitary.
\end{definition}

The unitary part of the canonical factorization of the conjugation $C$ enjoys a rather special property, namely
\[
\langle U e_n, e_m \rangle = \langle U e_m, e_n \rangle \qquad (m, n \in \Lambda),
\]
that is, $U$ is symmetric with respect to the canonical basis.

\begin{proposition}\label{prop: U is symmetric}
Let $U \in \clb(\clh)$ be a unitary. Then $C = U J_{\clh}$ defines a conjugation on $\clh$ if and only if $U$ is $J_{\clh}$-symmetric.
\end{proposition}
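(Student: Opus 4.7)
The plan is to observe that $C = UJ_{\clh}$ already automatically satisfies two of the three defining conditions of a conjugation, so the whole content of the statement reduces to translating the involution condition $C^2 = I_{\clh}$ into the $J_{\clh}$-symmetry condition on $U$.

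First I would check anti-linearity and isometry essentially by inspection. Anti-linearity of $C$ is immediate: $J_{\clh}$ is anti-linear and $U$ is linear, so the composition $UJ_{\clh}$ is anti-linear. For isometry, since $U$ is unitary and $J_{\clh}$ is isometric (as a conjugation), $\|Cx\| = \|UJ_{\clh}x\| = \|J_{\clh}x\| = \|x\|$ for every $x \in \clh$. So in both directions of the biconditional, the only remaining issue is whether $C^2 = I_{\clh}$.

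The main computation is then the chain of equivalences
\[
C^2 = I_{\clh} \iff UJ_{\clh}UJ_{\clh} = I_{\clh} \iff J_{\clh}UJ_{\clh} = U^* \iff U = J_{\clh}U^*J_{\clh},
\]
where the last step uses $J_{\clh}^2 = I_{\clh}$ applied on both sides. The final equality is, by definition, the statement that $U$ is $J_{\clh}$-symmetric (take $T = U$, $C = J_{\clh}$ in the defining relation $CT^*C = T$). Thus $C$ is a conjugation if and only if $U$ is $J_{\clh}$-symmetric, which completes the proof.

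I do not anticipate a genuine obstacle here; the content of the proposition is really a bookkeeping identity made possible by the canonical factorization in Proposition \ref{Prop: C = UC =CU*}. The only care needed is to make sure the manipulation $UJ_{\clh}UJ_{\clh} = I_{\clh} \Leftrightarrow J_{\clh}UJ_{\clh} = U^*$ is justified, which it is because $U$ is invertible with $U^{-1} = U^*$ and $J_{\clh}$ is its own inverse.
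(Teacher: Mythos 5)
Your proof is correct, but it takes a more direct, coordinate-free route than the paper. You reduce everything to the observation that anti-linearity and isometry of $C=UJ_{\clh}$ are automatic, so the proposition is exactly the equivalence $C^2=I_{\clh}\iff J_{\clh}U^*J_{\clh}=U$, which you obtain by left-multiplying $UJ_{\clh}UJ_{\clh}=I_{\clh}$ by $U^{-1}=U^*$ and then conjugating by $J_{\clh}$; all steps are legitimate since $J_{\clh}UJ_{\clh}$ is a (linear) operator and $U$ is invertible. The paper instead argues entrywise with respect to the canonical basis: in the forward direction it uses the identity $\langle Cx,y\rangle=\langle Cy,x\rangle$ valid for any conjugation to get $\langle Ue_n,e_m\rangle=\langle Ue_m,e_n\rangle$, and in the converse it derives the intertwining relation $UJ_{\clh}=J_{\clh}U^*$ from this entrywise symmetry before computing $C^2=UU^*=I$. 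The two conditions are of course equivalent (the entrywise symmetry is precisely $J_{\clh}$-symmetry of $U$ written in the basis fixed by $J_{\clh}$). Your version is shorter and makes the logical content transparent; the paper's version has the advantage of producing the explicit matrix condition $\langle Ue_n,e_m\rangle=\langle Ue_m,e_n\rangle$, which is the form actually invoked later in the paper (e.g.\ to establish $u_{m,j}=u_{j,m}$ in \eqref{eqn: u twist}). If you wanted your argument to feed into those later uses, you would add the one-line remark that $J_{\clh}U^*J_{\clh}=U$ is equivalent to the entrywise symmetry of $[U]$ in the canonical basis.
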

\begin{proof}
Suppose $C = U J_{\clh}$ is a conjugation. For each $m, n \in \Lambda$, we have
\[
\langle U e_n, e_m \rangle = \langle U J_{\clh} e_n, e_m \rangle = \langle C e_n, e_m \rangle = \langle C e_m, e_n \rangle,
\]
where the latter equality follows from \eqref{eqn: conj inner prod}. By reversing the argument, it follows that $\langle U e_n, e_m \rangle = \langle U e_m, e_n \rangle$. For the converse, suppose $\langle U e_n, e_m \rangle = \langle U e_m, e_n \rangle$ for all $m, n \in \Lambda$. We need to prove that $C := U J_{\clh}$ is a conjugation on $\clh$. Clearly, $C$ is anti-linear and isometry. Moreover, for each $m, n \in \Lambda$, we have
\[
\begin{split}
\langle U J_{\clh} e_n, e_m \rangle & = \langle U e_n, e_m \rangle
\\
& = \langle U e_m, e_n \rangle
\\
& = \langle e_m, U^* e_n \rangle
\\
& = \langle J_{\clh} e_m, U^* e_n \rangle
\\
& = \langle J_{\clh} U^* e_n,  e_m \rangle,
\end{split}
\]
as $J_{\clh}$ is a conjugation. Therefore, $C = U J_{\clh} = J_{\clh} U^*$, and hence
\[
C^2 = (U J_{\clh}) (J_{\clh} U^*) = U U^* = I.
\]
Consequently, $C$ is a conjugation.
\end{proof}

The above yields our second characterization of conjugations.

\newsection{Representations of conjugations}\label{sec: rep of conj}

This section presents our third and final characterization of conjugations, which also yields useful representations of conjugations. We will also illustrate how one can recover the commonly used conjugations from our representations of conjugations (cf. Examples \ref{illustration_example} and \ref{example Ferreira}).

Given a Hilbert space $\clh$ and a fixed basis $\{e_n\}_{n \in \Lambda} \in B_{\clh}$, we define
\[
c_{n,m}^{(\tau)} = \sum_{k} \langle e_k, \tau_n \rangle \langle e_k, \tau_m \rangle \qquad (m, n \in \Lambda),
\]
for all $\tau = \{\tau_n\}_{n \in \Lambda} \in B_{\clh}$. Usually, the basis $\{e_n\}_{n \in \Lambda} \in B_{\clh}$ will be clear from the context and we do not include it in the above notation. We are now ready for our third characterization of conjugations.

\begin{proposition}\label{conju_Charac_theorem}
Let $C \in \cll_a(\clh)$. Then $C$ is a conjugation if and only if there exists $\{f_n\}_{n \in \Lambda} \in B_{\clh}$ such that
\[
C(\sum_{n} a_n\tau_n) = \sum_{n} \sum_{m} \bar{a}_n c_{n,m}^{(\tau)} \tau_m,
\]
for all $\tau = \{\tau_n\}_{n \in \Lambda} \in B_{\clh}$ and $\{a_n\}_{n \in \Lambda} \in \ell^2$, where
\[
c_{n,m}^{(\tau)}=\displaystyle\sum_{k}\langle f_k,\tau_n\rangle \langle f_k,\tau_m\rangle \qquad (m, n \in \Lambda).
\]
\end{proposition}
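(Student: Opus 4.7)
My plan is to exploit Lemma \ref{conjuONB}: a conjugation on $\clh$ is characterized by possessing an orthonormal basis of fixed points, and once such a basis $\{f_n\}_{n\in\Lambda}$ is fixed, every action of $C$ on an arbitrary vector can be read off by expanding first in $\{f_n\}$, using anti-linearity, and then re-expanding in the target basis $\tau$. The formula in the statement is exactly what this double expansion produces.

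For the forward direction I will first invoke Lemma \ref{conjuONB} to select $\{f_n\}_{n\in\Lambda}\in B_\clh$ with $C f_n = f_n$ for every $n$. Because $C$ is isometric it is norm-continuous, so for any $\tau=\{\tau_n\}\in B_\clh$ and $h=\sum_n a_n\tau_n$ with $(a_n)\in\ell^2$ I may write $Ch=\sum_n\bar a_n\, C\tau_n$. To compute $C\tau_n$ I expand $\tau_n=\sum_k \langle\tau_n,f_k\rangle f_k$ and apply $C$; anti-linearity together with $Cf_k=f_k$ gives $C\tau_n=\sum_k \langle f_k,\tau_n\rangle f_k$. Pairing with $\tau_m$ yields $\langle C\tau_n,\tau_m\rangle = \sum_k \langle f_k,\tau_n\rangle\langle f_k,\tau_m\rangle = c_{n,m}^{(\tau)}$, so $C\tau_n=\sum_m c_{n,m}^{(\tau)}\tau_m$, and substitution into the earlier expression produces the advertised double-sum formula.

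For the converse I will specialize the hypothesis to $\tau=\{f_n\}$. A one-line computation gives $c_{n,m}^{(f)}=\sum_k\langle f_k,f_n\rangle\langle f_k,f_m\rangle=\delta_{n,m}$, so the formula forces $Cf_n=f_n$ for every $n\in\Lambda$. Lemma \ref{conjuONB} then promotes $C\in\cll_a(\clh)$ to a conjugation and the argument is finished.

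The one point that deserves care is justifying the interchange of summations. Convergence of the inner series $\sum_m c_{n,m}^{(\tau)}\tau_m$ is automatic since it is just the Fourier expansion of $C\tau_n$ in $\tau$; convergence of the outer series $\sum_n \bar a_n C\tau_n$ follows from norm-continuity of $C$; and the resulting double sum is absolutely under control since $|c_{n,m}^{(\tau)}|\le 1$ by Cauchy--Schwarz applied to $\sum_k|\langle f_k,\tau_n\rangle|^2=1$. I expect this bookkeeping of convergence to be the only mildly technical point; the algebraic substance of the proposition is essentially Lemma \ref{conjuONB} rewritten in coordinates with respect to an arbitrary basis $\tau$.
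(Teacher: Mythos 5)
Your proposal is correct and follows essentially the same route as the paper: invoke Lemma \ref{conjuONB} to fix $\{f_n\}$ with $Cf_n=f_n$, compute $\langle C\tau_n,\tau_m\rangle=c_{n,m}^{(\tau)}$ by expanding in the $\{f_k\}$ basis, and for the converse specialize $\tau=\{f_n\}$ so that $c_{n,m}^{(\tau)}=\delta_{n,m}$. The only cosmetic difference is that you obtain $C\tau_n=\sum_k\langle f_k,\tau_n\rangle f_k$ by applying $C$ termwise to the Fourier expansion of $\tau_n$ (using continuity), whereas the paper expands $C\tau_n$ in $\{f_k\}$ and uses the identity $\langle Cx,y\rangle=\langle Cy,x\rangle$; these are interchangeable.
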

\begin{proof}
Let $C$ be a conjugation on $\clh$. By Lemma \ref{conjuONB}, there exists $\{f_{n}\}_{n \in \Lambda} \in B_{\clh}$ such that $Cf_{n} = f_{n}$ for all $n \in \Lambda$. Fix a basis $\{\tau_{n}\}_{n \in \Lambda} \in B_{\clh}$. For each $m, n \in \Lambda$, we have
\[
\tau_{m} = \sum_{j} \inp{\tau_{m}}{f_j} f_j,
\]
and
\[
\begin{split}
C \tau_{n} & = \sum_{k} \inp{C\tau_{n}}{f_k}f_k
\\
& = \sum_{k} \inp{C f_{k}}{\tau_{n}} f_k
\\
& = \sum_{k} \inp{f_{k}}{\tau_{n}} f_k.
\end{split}
\]
In the above, we have used the fact that $\inp{Cf}{g} = \inp{Cg}{f}$ for all $f, g \in \clh$ (see \eqref{eqn: conj inner prod}). Therefore
\[
\begin{split}
\inp{C \tau_n}{\tau_m} & = \inp{\sum_{k} \inp{f_{k}}{\tau_{n}} f_k}{\sum_{j} \inp{\tau_{m}}{f_j} f_j}
\\
& = \sum_{k}\inp{f_{k}}{\tau_{n}}\overline{\inp{ \tau_{m}}{f_k}}
\\
& = \sum_{k}\inp{f_{k}}{\tau_{n}}\inp{ f_k}{\tau_{m}}
\\
& = c^{(\tau)}_{n,m},
\end{split}
\]
and finally
\[
\begin{split}
C(\sum_{n} a_n\tau_n) & = \sum_{n} \bar{a}_n C\tau_{n}
\\
& = \sum_{n} \sum_{m}\bar{a}_n \inp{C\tau_{n}}{\tau_{m}}\tau_m
\\
& = \sum_{n} \sum_{m}\bar{a}_n c^{(\tau)}_{n,m}\tau_m.
\end{split}
\]
To show the converse, we choose, in particular, that $\tau_n=f_n$ for all $n \in \Lambda$. Then
\[
c_{n,m}^{(\tau)} =
\begin{cases}
1 & \mbox{if } m = n
\\
0 & \mbox{otherwise},
\end{cases}
\]
which yields
\[
C(\sum_{n} a_n\tau_n) = C(\sum_{n} a_n f_n) = \sum_{n} \sum_{m}\bar{a}_n c_{n,m}^{(\tau)} f_m = \sum_{n} \bar{a}_n f_n,
\]
that is, $C(\sum_{n} a_n\tau_n) = \sum_{n} \bar{a}_n\tau_n$ for all $\sum_{n} a_n\tau_n \in \clh$. This proves that $C$ is a conjugation and completes the proof of the proposition.
\end{proof}

The above proposition should be compared with \cite[Proposition 6]{ferreira_junior}, which gives representations of conjugations on $H^2(\D)$ with non-explicit coefficients. In the present case, the result is complete in the sense that it holds for general conjugations, and the coefficients $\{c_{n,m}^{(\tau)}\}$ are explicit and completely determined by the basis $\{f_n\}_{n \in \Lambda} \in B_{\clh}$ corresponding to $C$ and arbitrary basis $\tau \in B_{\clh}$.

We also observe, in view of Proposition \ref{conju_Charac_theorem}, that if $\clh$ is infinite-dimensional Hilbert space and $C \in \clc(\clh)$, then necessarily
\[
\{c_{n,m}^{(\tau)}\}_{m \in \Lambda} \in \ell^2,
\]
for all $n \in \Lambda$ and $\tau \in B_{\clh}$. The necessary part of the above proposition yields representations of conjugations which we record for future references:

\begin{corollary}\label{cor: rep of C}
Let $\tau = \{\tau_n\}_{n \in \Lambda} \in B_{\clh}$, and let $C$ be a conjugation on $\clh$ corresponding to $\{f_n\}_{n \in \Lambda} \in B_{\clh}$. Then
\[
C(\sum_{n} a_n\tau_n) = \sum_{n} \sum_{m} \bar{a}_n c_{n,m}^{(\tau)} \tau_m,
\]
where
\[
c_{m,n}^{(\tau)}=\displaystyle\sum_{k}\langle f_k,\tau_n\rangle \langle f_k,\tau_m\rangle,
\]
for all $m, n \in \Lambda$.
\end{corollary}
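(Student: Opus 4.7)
The plan is to observe that this corollary is essentially a restatement of the necessity direction of Proposition \ref{conju_Charac_theorem} for later reference, so I would give a short proof that simply invokes (or briefly re-derives) that computation. Concretely: given that $C f_n = f_n$ for all $n \in \Lambda$ and given an arbitrary basis $\tau = \{\tau_n\}_{n \in \Lambda} \in B_{\clh}$, the formula follows by applying the forward direction of Proposition \ref{conju_Charac_theorem} to this specific pair.

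If I wish to make the proof self-contained rather than cite, the heart of the argument is the identity
\[
C \tau_n = \sum_{k} \langle f_k, \tau_n \rangle f_k,
\]
which comes from expanding $C \tau_n$ in the orthonormal basis $\{f_k\}$ and using \eqref{eqn: conj inner prod} to rewrite $\langle C \tau_n, f_k \rangle = \langle C f_k, \tau_n \rangle = \langle f_k, \tau_n \rangle$. Pairing this with the expansion $\tau_m = \sum_j \langle \tau_m, f_j \rangle f_j$ then gives
\[
\langle C \tau_n, \tau_m \rangle = \sum_{k} \langle f_k, \tau_n \rangle \overline{\langle \tau_m, f_k \rangle} = \sum_{k} \langle f_k, \tau_n \rangle \langle f_k, \tau_m \rangle = c_{n,m}^{(\tau)}.
\]
Finally, anti-linearity of $C$ together with the orthonormal expansion $C\tau_n = \sum_m \langle C\tau_n, \tau_m \rangle \tau_m$ yields
\[
C\Big(\sum_n a_n \tau_n\Big) = \sum_n \bar a_n C \tau_n = \sum_n \sum_m \bar a_n c_{n,m}^{(\tau)} \tau_m,
\]
which is the desired formula.

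There is no real obstacle in this step; the entire computation was already carried out inside the proof of Proposition \ref{conju_Charac_theorem}, and the corollary simply isolates the explicit representation in a form that will be convenient when we later analyze the Toeplitz situation with $\clh = H^2(\D)$ and $\tau = \zeta$. The only technical point worth noting is that the inner sum $\sum_m c_{n,m}^{(\tau)} \tau_m$ converges in $\clh$ because it equals $C\tau_n$, so $\{c_{n,m}^{(\tau)}\}_{m \in \Lambda} \in \ell^2$ with norm one for every $n$, which also retroactively justifies the interchange of sums in the displayed formula.
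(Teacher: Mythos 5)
Your proposal is correct and follows essentially the same route as the paper: the corollary is simply the necessity direction of Proposition \ref{conju_Charac_theorem}, and your self-contained derivation (expanding $C\tau_n$ over $\{f_k\}$ via \eqref{eqn: conj inner prod} and then over $\{\tau_m\}$) reproduces exactly the computation in the paper's proof of that proposition. The closing remark on $\ell^2$-summability of $\{c_{n,m}^{(\tau)}\}_m$ matches the observation the paper records immediately after the proposition.
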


The following example illustrates importing representations of conjugations with respect to a suitable basis ($\{z^n\}_{n \in \Z_+} \in B_{H^2(\D)}$ in this case).

\begin{example}\label{illustration_example}
Fix $\theta, \xi \in \R$, and consider the basis $\{f_n\}_{n \in \Z_+} \in B_{H^2(\D)}$, where
\[
f_n= e^{\frac{i\xi}{2}} e^{\frac{-i n \theta}{2}}z^n \qquad (n \in \Z_+).
\]
Let $C_{\theta, \xi}$ denote the conjugation corresponding to $\{f_n\}_{n \in \Z_+} \in B_{H^2(\D)}$, that is, $C_{\theta, \xi} f_m = f_m$ for all $m \in \Z_+$. Now we consider the canonical basis $\zeta = \{z^n\}_{n \in \Z_+} \in B_{H^2(\D)}$. We compute
\[
\begin{split}
c_{n,m}^{(\zeta)}&=\displaystyle\sum_{k=0}^{\infty}\langle f_k,z^n\rangle \langle f_k, z^m\rangle
\\
& =
\begin{cases}
0 & \text{if } m\neq n \\
e^{i\xi}e^{-in\theta} & \text{if } m=n.
\end{cases}
\end{split}
\]
Consequently, for each $f = \sum_{n=0}^\infty a_nz^n \in H^2(\D)$, we have
\[
\begin{split}
C_{\theta, \xi}(\sum_{n=0}^\infty a_nz^n)& = \sum_{n=0}^\infty \sum_{m=0}^\infty \bar{a}_n c_{n, m}^{(\zeta)} z^m
\\
& = \sum_{n=0}^\infty \bar{a}_n c_{n, n}^{(\zeta)} z^n
\\
& = e^{i\xi} \sum_{n=0}^\infty \bar{a}_n e^{-in\theta} z^n,
\end{split}
\]
and hence
\[
(C_{\theta, \xi} f)(z)=e^{i\xi}\overline{f(e^{i\theta}\bar{z})} \qquad (f \in H^2(\D)).
\]
In particular, if $\theta = \xi = 0$, then we get back the canonical conjugation $J_{H^2(\D)}$ of $H^2(\D)$.
\end{example}

The conjugation $C_{\theta, \xi}$ on $H^2(\D)$ was introduced in \cite{Ko_Lee_SCTO}. The above example asserts that the representation of $C_{\theta, \xi}$ can be fully recovered from our general approach. The following example is another instance \cite[Proposition 2.6]{Li}:

\begin{example}\label{example Ferreira}
Let $\{\alpha_n\}_{n \in \Z_+} \subseteq \T$, and suppose
\[
f_n= \alpha_n z^n \qquad (n \in \Z_+).
\]
Clearly, $\{f_n\}_{n \in \Z_+} \in B_{H^2(\D)}$. Let $C_{\alpha}$ denote the conjugation corresponding to $\{f_n\}_{n \in \Z_+} \in B_{H^2(\D)}$, that is, $C_{\alpha} f_n = f_n$ for all $n \in \Z_+$. As in the above example, with the canonical basis $\zeta = \{z^n\}_{n \in \Z_+} \in B_{H^2(\D)}$, we have
\[
c_{n,m}^{(\zeta)} =
\begin{cases}
0 & \text{if } m\neq n \\
{\alpha_n}^2 & \text{if } m=n.
\end{cases}
\]
A similar computation leads to the representation of $C_\alpha$ as
\[
C_{\alpha}(\sum_{n=0}^\infty a_nz^n) = \sum_{n=0}^\infty \bar{a}_n {\alpha_n}^2 z^n,
\]
for all $\sum_{n=0}^\infty a_nz^n \in H^2(\D)$.
\end{example}

Clearly, Example \ref{illustration_example} follows from the above example with $\alpha_n = e^{\frac{i\xi}{2}} e^{\frac{-i n \theta}{2}}$ for all $n \in \Z_+$. We shall return to this theme in Section \ref{sec: S Toeplitz and symm}, where we will present one of the characterizations of symmetric Toeplitz operators.

\section{$S$-Toeplitz operators}\label{sec: S Toeplitz}

This short section aims to introduce the notion of $S$-Toeplitz operators and signal its role to symmetric Toeplitz operators. From now onwards, all Hilbert spaces are assumed to be of infinite-dimensional, unless specified otherwise or clear from the context.

We begin with the definition of shift operators, and refer the reader to the monograph \cite{RR-Book} for a more detailed treatment of shift operators. An operator $S \in \clb(\clh)$ is called \textit{shift} if $S$ is an isometry (that is, $S^*S = I_{\clh}$) and $S$ is pure (that is, $\|S^{*m}h\| \raro 0$ for all $h \in \clh$). The multiplicity of a shift $S$ is the number
\[
mult(S) := \text{dim} (\ker S^*) \in \mathbb{N} \cup \{\infty\}.
\]
In view of the generating wandering subspace property, an operator $S$ on $\clh$ is a shift of multiplicity one if and only if there exists $\{f_n\}_{n \in \Z_+} \in B_{\clh}$ such that
\[
S f_n = f_{n+1} \qquad (n \in \Z_+).
\]
Shifts of multiplicity one are commonly known as unilateral shift:

\begin{definition}\label{def: shift}
We say that $S \in \clb(\clh)$ is a unilateral shift corresponding to $\{f_n\}_{n \in \Z_+} \in B_{\clh}$ if $S f_n = f_{n+1}$ for all $n \in \Z_+$.
\end{definition}

Clearly, $M_z$ on $H^2(\D)$ is a unilateral shift corresponding to the canonical basis $\{z^n\}_{n \in \Z_+} \in B_{H^2(\D)}$. Recall that an operator $T \in \clb(H^2(\D))$ is a Toeplitz operator if and only if (see the monographs \cite{RGD, RR-Book})
\[
M_z^* T M_z = T.
\]
With this motivation in mind, we now introduce $S$-Toeplitz operators which also include all the classical Toeplitz operators (see \cite[Chapter 3]{RR-Book}).

\begin{definition}
Let $S$ be a shift on $\clh$. An operator $T \in \clb(\clh)$ is called $S$-Toeplitz if
\[
S^* T S = T.
\]
\end{definition}

Recall that for a Hilbert space $\clh$ with the canonical basis $\{e_n\}_{n \in \Z_+} \in B_{\clh}$, the canonical conjugation $J_{\clh} \in \clc(\clh)$ is defined by (see Definition \ref{eqn: can conjug})
\[
J_{\clh}(\sum_{n =0}^\infty a_n e_n) = \sum_{n =0}^\infty \bar{a}_n e_n,
\]
for all $\sum_{n =0}^\infty a_n e_n \in \clh$. We define the \textit{canonical shift} $S_{\clh} \in \clb(\clh)$ by
\[
S_{\clh} e_n = e_{n+1} \qquad (n \in \Z_+).
\]
Then, as in \eqref{eqn: CH Mz comm}, it follows that
\[
J_{\clh} S_{\clh} = S_{\clh} J_{\clh}.
\]

Given a conjugation on a Hilbert space, there is a natural way to construct a unilateral shift on the same Hilbert space:

\begin{lemma}\label{lemma: C = UC =CU*}
Let $C \in \clc(\clh)$. Suppose $C = U J_{\clh}$ is the canonical factorization of $C$, and $f_n:=U e_n$ for all $n \in \Z_+$. Then
\[
S:= C S_{\clh} C,
\]
is a unilateral shift corresponding to $\{f_n\} \in B_{\clh}$.
\end{lemma}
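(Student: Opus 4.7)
The plan is to unwind the definitions and compute $S f_n$ directly, verifying that it equals $f_{n+1}$. The key observation is that the vectors $\{f_n\}$ are simultaneously the images of $\{e_n\}$ under $U$ and under $C$, which makes the interaction with $C^2 = I$ particularly clean.

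First I would note that $\{f_n\}_{n \in \Z_+} \in B_{\clh}$: since $U$ is unitary, $\langle f_n, f_m \rangle = \langle U e_n, U e_m \rangle = \langle e_n, e_m \rangle = \delta_{nm}$, and $U$ having dense range ensures completeness. Second, I would observe the crucial identity
\[
C e_n = U J_{\clh} e_n = U e_n = f_n \qquad (n \in \Z_+),
\]
which uses $J_{\clh} e_n = e_n$ (the canonical conjugation fixes the canonical basis) together with the canonical factorization $C = U J_{\clh}$. Combining this with $C^2 = I_{\clh}$ immediately gives $C f_n = e_n$.

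With these preparations, the verification is a three-step calculation:
\[
S f_n = C S_{\clh} C f_n = C S_{\clh} e_n = C e_{n+1} = f_{n+1},
\]
where the middle equality uses $S_{\clh} e_n = e_{n+1}$ by definition of the canonical shift, and the remaining equalities are the two applications of $C e_k = f_k$ just established. By Definition \ref{def: shift}, this says precisely that $S$ is a unilateral shift corresponding to the basis $\{f_n\}_{n \in \Z_+}$.

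There is no real obstacle here: once Proposition \ref{Prop: C = UC =CU*} supplies the canonical factorization and one notices that $C$ carries $e_n$ to $f_n$ (and hence $f_n$ back to $e_n$ by involutivity), the identity $S f_n = f_{n+1}$ falls out in one line. The only small point worth flagging is that the statement builds in two assertions — that $\{f_n\}$ is an orthonormal basis and that $S$ acts as the forward shift on it — so both should be addressed explicitly, even though each is essentially immediate.
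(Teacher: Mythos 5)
Your proof is correct and follows essentially the same route as the paper: both establish $C e_n = U e_n = f_n$ from the canonical factorization and then compute $S f_n = C S_{\clh} C f_n = C S_{\clh} e_n = C e_{n+1} = f_{n+1}$. The paper closes by remarking that $S$ is a linear isometry, while you instead verify explicitly that $\{f_n\}$ is an orthonormal basis; both are immediate and either remark completes the argument.
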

\begin{proof}
Since $\{e_n\}_{n \in \Z_+} \in B_{\clh}$ is the canonical basis of $\clh$, and $C = U J_{\clh}$, it follows that
\[
C e_n = U J_{\clh} e_n = U e_n,
\]
that is
\begin{equation}\label{eqn: C en = B en}
C e_n = U e_n \qquad (n \in \Z_+).
\end{equation}
By the definition, we then have $f_n = U e_n = C e_n$, and consequently
\[
\begin{split}
S f_n & = C S_{\clh} C (C e_n)
\\
& = CS_{\clh} e_n
\\
& = C e_{n+1}
\\
& = U e_{n+1}
\\
& = f_{n+1},
\end{split}
\]
for all $n \in \Z_+$. Now the conclusion follows from the fact that $S$ is a linear isometry.
\end{proof}

Specializing the above lemma to the case that $\clh$ is the Hardy space we conclude:

\begin{corollary}\label{cor: CMzC shift}
If $C$ is a conjugation on $H^2(\D)$, then $CM_zC$ is a unilateral shift on $H^2(\D)$.
\end{corollary}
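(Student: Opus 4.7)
The plan is to derive the corollary as an immediate specialization of Lemma \ref{lemma: C = UC =CU*}. The only thing to notice is the identification of the canonical shift $S_\clh$ when $\clh = H^2(\D)$.

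First, I would recall that the canonical basis of $H^2(\D)$ is $\zeta = \{z^n\}_{n \in \Z_+}$, so by Definition the canonical shift $S_{H^2(\D)}$ satisfies $S_{H^2(\D)} z^n = z^{n+1}$ for every $n \in \Z_+$. Since $M_z z^n = z^{n+1}$ and both operators are bounded linear extensions from the same orthonormal basis, we conclude $S_{H^2(\D)} = M_z$. This identification is really the only content beyond the invocation of the lemma.

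Next, I would apply Lemma \ref{lemma: C = UC =CU*} to $\clh = H^2(\D)$ with the given conjugation $C$. Writing the canonical factorization $C = U J_{H^2(\D)}$ from Proposition \ref{Prop: C = UC =CU*}, the lemma produces a unilateral shift
\[
S := C S_{H^2(\D)} C = C M_z C,
\]
corresponding to the orthonormal basis $\{U z^n\}_{n \in \Z_+} \in B_{H^2(\D)}$. This yields exactly the conclusion of the corollary.

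There is no serious obstacle here, since Lemma \ref{lemma: C = UC =CU*} already does all the work: antilinearity of $C$ combined with the isometric and involutive properties guarantees that $C M_z C$ is a linear isometry, and the action on the basis $\{C z^n\}_{n \in \Z_+} = \{U z^n\}_{n \in \Z_+}$ exhibits the pure-shift behavior. The only minor verification step worth flagging is observing that $\{U z^n\}_{n \in \Z_+}$ is indeed an orthonormal basis of $H^2(\D)$, which is immediate from the unitarity of $U$ in the canonical factorization.
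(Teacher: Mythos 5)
Your proof is correct and follows exactly the paper's route: the corollary is obtained by specializing Lemma \ref{lemma: C = UC =CU*} to $\clh = H^2(\D)$, where the canonical shift $S_{H^2(\D)}$ with respect to the canonical basis $\{z^n\}_{n\in\Z_+}$ is $M_z$. Nothing further is needed.
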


Our entry point to address Question \ref{Q1} is that symmetric Toeplitz operators are $S$-Toeplitz.

\begin{proposition}\label{thm: symm S Toep}
Let $\varphi\in L^\infty(\T)$, $C$ a conjugation on $H^2(\D)$, and suppose $S = CM_zC$. If $T_\vp$ is $C$-symmetric, then $T_{\vp}$ is $S$-Toeplitz.
\end{proposition}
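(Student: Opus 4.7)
The proof plan is a direct computation once we establish a suitable anti-linear adjoint identity, so the main objective is to reduce the $S$-Toeplitz condition $S^*T_\varphi S = T_\varphi$ to two things we already know: the classical Toeplitz identity $M_z^*T_\varphi M_z = T_\varphi$, and the $C$-symmetry hypothesis $CT_\varphi^*C = T_\varphi$.

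First I would record the auxiliary fact that for any conjugation $C$ on $\clh$ and any linear $A \in \clb(\clh)$,
\[
(CAC)^* = CA^*C.
\]
This is verified by a short calculation using the identities $C^2 = I$ and $\langle Cx, y\rangle = \langle Cy, x\rangle$ from \eqref{eqn: conj inner prod}: starting from $\langle CACx, y\rangle$, insert $y = C^2y$, push $C$ through to produce $\langle Cy, ACx\rangle$, take complex conjugates and apply \eqref{eqn: conj inner prod} once more to arrive at $\langle x, CA^*Cy\rangle$. In particular, taking $A = M_z$ and using $S = CM_zC$ gives $S^* = CM_z^*C$.

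Next, I would simply compute
\[
S^*T_\varphi S \;=\; (CM_z^*C)\,T_\varphi\,(CM_zC).
\]
Now I substitute the $C$-symmetry hypothesis in the form $T_\varphi = CT_\varphi^*C$ into the middle factor and telescope using $C^2 = I$:
\[
S^*T_\varphi S \;=\; CM_z^*C\cdot CT_\varphi^*C\cdot CM_zC \;=\; C\,M_z^*T_\varphi^*M_z\,C.
\]
Finally, since $T_\varphi$ is a classical Toeplitz operator, $M_z^*T_\varphi M_z = T_\varphi$; taking adjoints of both sides yields $M_z^*T_\varphi^*M_z = T_\varphi^*$. Substituting this back gives
\[
S^*T_\varphi S \;=\; CT_\varphi^*C \;=\; T_\varphi,
\]
where the last equality is again the $C$-symmetry of $T_\varphi$. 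Hence $T_\varphi$ is $S$-Toeplitz.

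There is no real obstacle here; the only point that requires a bit of care is the anti-linear adjoint identity $(CAC)^* = CA^*C$, after which everything collapses by cancelling $C^2 = I$ between the three factors. I would present the argument as a single short display with one line of justification per equality.
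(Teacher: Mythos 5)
Your proof is correct and follows essentially the same route as the paper: the paper's argument is the identical chain of equalities $T_\varphi = CT_\varphi^*C = CM_z^*T_\varphi^*M_zC = CM_z^*C\,T_\varphi\,CM_zC = S^*T_\varphi S$ read in the reverse direction, resting on the same key identity $S^* = CM_z^*C$ (which the paper records separately as $(CXC)^* = CX^*C$ in equation \eqref{eqn: CXC star n}).
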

\begin{proof}
By Corollary \ref{cor: CMzC shift}, we know that $S = CM_zC$ is a unilateral shift. Moreover, $CT_\varphi C=T^*_\varphi$, by definition, and $M_z^*T_\varphi M_z=T_\varphi$, as $T_\varphi$ is a Toeplitz operator. Then
\[
\begin{split}
T_\varphi  & = C T^*_\varphi C
\\
& = CM_z^* T^*_\varphi M_z C
\\
& = CM_z^*CT_\varphi C M_z C
\\
& = S^*T_\varphi S,
\end{split}
\]
as $S^* = C M_z^* C$. This completes the proof of the proposition.
\end{proof}

The converse is not true in general. Before we present a counterexample, we recall a few facts concerning the Hardy space. Denote by $H^\infty(\D)$ the Banach algebra of all bounded analytic functions on $\D$. Given $\theta \in H^\infty(\D)$, denote by $M_\theta$ the multiplication operator on $H^2(\D)$. That is
\[
M_\theta f = \theta f \qquad (f \in H^2(\D)).
\]
It then follows that for $X \in \clb(H^2(\D))$, that $X = M_\theta$ for some $\theta \in H^\infty(\D)$ if and only if
\[
X M_z = M_z X.
\]
Equivalently
\[
\{M_z\}' = \{M_\theta: \theta \in H^\infty(\D)\}.
\]
Also note that $T_\theta = M_\theta$ for all $\theta \in H^\infty(\D)$. Now we turn to the counterexample. Let $\varphi \in H^\infty(\D)$ be a nonconstant function, and let
\[
S = J_{H^2(\D)} M_z J_{H^2(\D)}.
\]
Since $J_{H^2(\D)} M_z = M_z J_{H^2(\D)}$ (see \eqref{eqn: CH Mz comm}), it follows that $S = M_z$, and hence $S^*T_\varphi S=T_\varphi$. However, $T_{\vp}$ is not $C$-symmetric for any conjugation $C$ on $H^2(\D)$ (see \cite[Corollary 2.2]{Ko_Lee_SCTO} and \cite[Proposition 2.2]{Noor}).

The following section will furnish the missing link that would unlock the complete classification of $C$-symmetric Toeplitz operators.

\section{Symmetric and $S$-Toeplitz operators}\label{sec: S Toeplitz and symm}

In this section, in continuation of Proposition \ref{thm: symm S Toep}, we present our first characterization of symmetric Toeplitz operators. Our answer connects symmetric Toeplitz operators with the classical $S$-Toeplitz operators. Let
\[
\vp = \sum_{n=-\infty}^{\infty} \vp_n z^n \in L^\infty(\T).
\]
Denote by $[T_\vp]_{\{z^n\}_{n \in \Z_+}}$ the formal matrix representation of the Toeplitz operator $T_\vp$ with respect to the canonical basis $\{z^n\}_{n \in \Z_+} \in B_{H^2(\D)}$. Therefore, we have the familiar Toeplitz matrix representation
\[
[T_\vp]_{\{z^n\}_{n \in \Z_+}} =
\begin{bmatrix}	\vp_0 & \vp_{-1} & \vp_{-2} & \vp_{-3} & \hdots
\\
\vp_{1} & \vp_0 & \vp_{-1} & \vp_{-2} & \ddots
\\
\vp_{2} & \vp_{1} & \vp_0 & \vp_{-1} & \ddots
\\
\vp_{3} & \vp_{2} & \vp_{1} & \vp_0 & \ddots
\\
\vdots&\vdots&\vdots&\vdots&\ddots
\end{bmatrix}.
\]
For each $p, q \in \Z_+$, observe that
\[
\begin{split}
\langle T_\vp z^p, z^q \rangle & = \langle P_{H^2(\D)} L_\vp z^p, z^q \rangle
\\
& = \langle \vp z^p, z^q \rangle
\\
& = \langle \sum_{n=-\infty}^{\infty} \vp_n z^{n+p}, z^q \rangle
\\
& = \vp_{q-p},
\end{split}
\]
that is
\begin{equation}\label{eqn: Toepl coeff}
\langle T_\vp z^p, z^q \rangle = \vp_{q-p} \qquad (p, q \in \Z_+).
\end{equation}
Now, suppose $\{f_n\}_{n \in \Z_+} \in B_{H^2(\D)}$. Denote by $S \in \clb(H^2(\D))$ the unilateral shift corresponding to $\{f_n\}_{n \in \Z_+} \in B_{H^2(\D)}$, that is (see Definition \ref{def: shift})
\[
S f_n = f_{n+1} \qquad (n \in \Z_+).
\]
Again, denote by $[T_\vp]_{\{f_n\}_{n \in \Z_+}}$ the formal matrix representation of $T_\vp$ with respect to the basis $\{f_n\}_{n \in \Z_+} \in B_{H^2(\D)}$. Observe that if $T_{\vp}$ is $S$-Toeplitz, then there exists a sequence $\{\alpha_n\}_{n\in \Z_+}$ such that
\[
T_\vp f_k = \sum_{n=-k}^{\infty} \alpha_n f_{n+k} \qquad (k \in \Z_+),
\]
which yields
\[
[T_\vp]_{\{f_n\}_{n \in \Z_+}}^t =
\begin{bmatrix}	\alpha_0 & \alpha_1 & \alpha_2 & \alpha_3 &\hdots
\\
\alpha_{-1} & \alpha_0 & \alpha_1 & \alpha_2 & \ddots
\\
\alpha_{-2} & \alpha_{-1} & \alpha_0 & \alpha_1 & \ddots
\\
\alpha_{-3} & \alpha_{-2} & \alpha_{-1} & \alpha_0 & \ddots
\\
\vdots&\vdots&\vdots&\vdots&\ddots
\end{bmatrix}.
\]
Now we turn to $C$-symmetric Toeplitz operators. Let $C \in \clc(H^2(\D))$. In view of Proposition \ref{Prop: C = UC =CU*} (also see Definition \ref{def: con factor}), $C$ admits the canonical factorization, that is, there is a unique unitary $U \in \clb(H^2(\D))$ such that
\[
C = U J_{H^2(\D)} = J_{H^2(\D)} U^*.
\]
Moreover, by Lemma \ref{lemma: C = UC =CU*}, we know that
\[
S := CM_zC,
\]
is a shift corresponding to $\{f_n\}_{n \in \Z_+} \in B_{H^2(\D)}$, where $f_n:= Uz^n$, $n \in \Z_+$. We now connect the formal Toeplitz matrix $[T_\vp]_{\{f_n\}_{n \in \Z_+}}$ with symmetricity of $T_\vp$.

\begin{theorem}\label{thm: S toep 1}
Let $\vp \in L^\infty(\T)$, and let $C \in \clc(H^2(\D))$. Suppose $C = U J_{H^2(\D)}$ is the canonical factorization of $C$, and let $f_n:= Uz^n$, $n \in \Z_+$. Then $T_\vp$ is $C$-symmetric if and only if
\[
[T_\vp]_{\{f_n\}_{n \in \Z_+}} = [T_\vp]_{\{z^n\}_{n \in \Z_+}}^t.
\]
\end{theorem}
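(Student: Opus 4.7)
The plan is to reduce both sides of the claimed equivalence to a single matrix identity and then check it entry by entry. Throughout, write $J := J_{H^2(\D)}$ and recall that by Proposition \ref{Prop: C = UC =CU*} the canonical factorization gives $C = UJ = JU^*$, so $C^* = C$ and $C^2 = I$. This last form is the key computational tool.

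First I would translate the $C$-symmetry condition into a purely linear identity. Since $CT_\vp^* C = T_\vp$ and $C = UJ = JU^*$, multiplying on the left by $U^*$ and on the right by $U$ yields the equivalent statement
\[
U^* T_\vp U = J T_\vp^* J.
\]
Two operators agree iff their matrices with respect to the canonical basis $\zeta = \{z^n\}_{n \in \Z_+}$ agree, so $C$-symmetry of $T_\vp$ is equivalent to $[U^*T_\vp U]_{\zeta} = [J T_\vp^* J]_{\zeta}$.

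Next I would identify each of the two matrices above with one of the matrices in the statement. For the left-hand side, using $f_n = Uz^n$ and the unitarity of $U$,
\[
\langle U^* T_\vp U z^j, z^i\rangle = \langle T_\vp f_j, f_i\rangle,
\]
so $[U^* T_\vp U]_{\zeta} = [T_\vp]_{\{f_n\}_{n\in\Z_+}}$. For the right-hand side, since $J$ fixes each $z^n$ and satisfies $\langle Jx, y\rangle = \overline{\langle x, Jy\rangle}$ for all $x,y$ (anti-linearity plus the polarised isometry relation used in the paper), one computes
\[
\langle J T_\vp^* J z^j, z^i\rangle
= \overline{\langle T_\vp^* J z^j, J z^i\rangle}
= \overline{\langle T_\vp^* z^j, z^i\rangle}
= \langle T_\vp z^i, z^j\rangle
= \vp_{j-i},
\]
using \eqref{eqn: Toepl coeff} in the last step. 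But the $(i,j)$-entry of $[T_\vp]_{\zeta}^t$ is exactly $\langle T_\vp z^i, z^j\rangle = \vp_{j-i}$, hence $[J T_\vp^* J]_{\zeta} = [T_\vp]_{\zeta}^t$.

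Combining the two identifications, $C$-symmetry of $T_\vp$ is equivalent to $[T_\vp]_{\{f_n\}_{n\in\Z_+}} = [T_\vp]_{\zeta}^t$, which is the desired claim. The proof is essentially a bookkeeping argument, and the main subtlety, though minor, is keeping the anti-linearity of $J$ straight when moving it past inner products; once the identity $U^*T_\vp U = JT_\vp^* J$ is in hand, everything else is a direct computation with basis vectors.
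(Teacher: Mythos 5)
Your proof is correct, but it does not follow the paper's own argument for this theorem; instead it runs the argument the paper reserves for Theorem \ref{thm: S toep 2}. The paper proves Theorem \ref{thm: S toep 1} by a direct computation on basis vectors: it expands $CT_\vp z^k = \sum_{n=-k}^{\infty}\bar{\vp}_n f_{n+k}$ using $C = UJ_{H^2(\D)}$ and $f_n = Uz^n$, observes $T_\vp^* C z^k = T_\vp^* f_k$, and reads off that $C$-symmetry is equivalent to $T_\vp^* f_k = \sum_{n=-k}^{\infty}\bar{\vp}_n f_{n+k}$, i.e.\ to $[T_\vp]_{\{f_n\}}$ being the transposed Toeplitz matrix. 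You instead conjugate the defining identity to obtain $U^*T_\vp U = J_{H^2(\D)}T_\vp^* J_{H^2(\D)}$ (the paper's \eqref{eqn: Ustar T U}) and then identify $[U^*T_\vp U]_{\zeta} = [T_\vp]_{\{f_n\}}$ and $[J_{H^2(\D)}T_\vp^* J_{H^2(\D)}]_{\zeta} = [T_\vp]_{\zeta}^t$; all three steps check out, including the sign/conjugation bookkeeping for the antilinear $J_{H^2(\D)}$. Your route has the merit of proving Theorems \ref{thm: S toep 1} and \ref{thm: S toep 2} simultaneously — the identity $[T_\vp]_{\{f_n\}} = [U^*T_\vp U]_{\zeta}$ holds with no symmetry hypothesis, so the two characterizations are visibly the same matrix equation — whereas the paper's direct computation buys something your argument keeps implicit: it exhibits the columns of $[T_\vp]_{\{f_n\}}$ concretely and thereby makes the $S$-Toeplitz structure of that matrix (the point of Theorem \ref{thm:intro 1}) transparent. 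One cosmetic caveat: the assertion ``$C^* = C$'' should be read in the antilinear sense of \eqref{eqn: conj inner prod}, $\langle Cx, y\rangle = \langle Cy, x\rangle$, which is what you actually use.
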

\begin{proof}
By the definition of symmetric operators, $T_\vp$ is $C$-symmetric if and only if $C T_\vp z^k = T_\vp^* C z^k$ for all $k \in \Z_+$. In view of the canonical factorization
\[
C = U J_{H^2(\D)},
\]
for each $k \in \Z_+$, we compute
\[
\begin{split}
C T_\vp z^k & = C P_{H^2(\D)} (\sum_{n=-\infty}^{\infty} \vp_n z^{n+k})
\\
& = U J_{H^2(\D)} (\sum_{n=-k}^{\infty} \bar{\vp}_n z^{n+k})
\\
& = U (\sum_{n=-k}^{\infty} \bar{\vp}_n z^{n+k})
\\
& = \sum_{n=-k}^{\infty} \bar{\vp}_n f_{n+k},
\end{split}
\]
as $U z^n = f_n$ for all $n \in \Z_+$. On the other hand, since
\[
U J_{H^2(\D)} z^k = U z^k = f_k,
\]
we have
\[
T_\vp^* C z^k = T_\vp^* f_k.
\]
This implies that $T_\vp$ is $C$-symmetric with $C = U J_{H^2(\D)}$ if and only if
\[
T_\vp^* f_k = \sum_{n=-k}^{\infty} \bar{\vp}_n f_{n+k} \qquad (k \in \Z_+).
\]
Equivalently, we have
\[
[T_\vp]_{\{f_n\}_{n \in \Z_+}} =
\begin{bmatrix}	\vp_0 & \vp_1 & \vp_2 & \vp_3 &\hdots
\\
\vp_{-1} & \vp_0 & \vp_1 & \vp_2 & \ddots
\\
\vp_{-2} & \vp_{-1} & \vp_0 & \vp_1 & \ddots
\\
\vp_{-3} & \vp_{-2} & \vp_{-1} & \vp_0 & \ddots
\\
\vdots&\vdots&\vdots&\vdots&\ddots
\end{bmatrix}.
\]
Therefore, $T_\vp$ is $C$-symmetric with $C = U J_{H^2(\D)}$ if and only if
\[
[T_\vp]_{\{f_n\}_{n \in \Z_+}} = [T_\vp]_{\{z^n\}_{n \in \Z_+}}^t,
\]
which completes the proof of the theorem.
\end{proof}

Recall Lemma \ref{lemma: C = UC =CU*}: If $C$ is a conjugation on $H^2(\D)$ with the canonical factorization $C = U C_{H^2(\D)}$ for some unitary $U \in \clb(H^2(\D))$, then
\[
S:= C M_zC,
\]
is the shift corresponding to $\{f_n\}_{n\in \Z_+} \in B_{H^2(\D)}$, where $f_n:= U z^n$, $n \in \Z_+$. Suppose $\vp \in L^\infty(\T)$. Evidently, if
\[
[T_\vp]_{\{f_n\}_{n \in \Z_+}} = [T_\vp]_{\{z^n\}_{n \in \Z_+}}^t,
\]
then, in particular, $T_{\vp}$ is $S$-Toeplitz. This was already observed in Proposition \ref{thm: symm S Toep}. Here the above equality of formal Toeplitz matrices is the missing link in the classification of symmetric Toeplitz operators (see the paragraph following Proposition \ref{thm: symm S Toep}).

We now consider a variation of the above argument.

\begin{theorem}\label{thm: S toep 2}
Let $\vp \in L^\infty(\T)$, and let $C \in \clc(H^2(\D))$. Then $T_\vp$ is $C$-symmetric if and only if
\[
[T_\vp]_{\{z^n\}_{n \in \Z_+}}^t = [U^* T_{\vp} U]_{\{z^n\}_{n \in \Z_+}},
\]
where $C = U J_{H^2(\D)}$ is the canonical factorization of $C$.
\end{theorem}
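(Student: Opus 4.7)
The plan is to obtain Theorem \ref{thm: S toep 2} as a direct consequence of Theorem \ref{thm: S toep 1} by recording a single change-of-basis identity. Write $C = U J_{H^2(\D)}$ for the canonical factorization, and set $f_n := U z^n$ for $n \in \Z_+$, as in the statement of Theorem \ref{thm: S toep 1}. Since $U$ is a linear unitary on $H^2(\D)$, for all $i, j \in \Z_+$,
\[
\langle T_\vp f_j, f_i \rangle = \langle T_\vp U z^j, U z^i \rangle = \langle U^* T_\vp U z^j, z^i \rangle,
\]
which, in the matrix convention $[T]_{\{g_n\}} = (\langle T g_j, g_i \rangle)_{i,j}$ used throughout the paper, is exactly the entrywise identity
\[
[T_\vp]_{\{f_n\}_{n \in \Z_+}} = [U^* T_\vp U]_{\{z^n\}_{n \in \Z_+}}.
\]

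Substituting this identity into the conclusion of Theorem \ref{thm: S toep 1} immediately yields
\[
T_\vp \text{ is } C\text{-symmetric} \iff [U^* T_\vp U]_{\{z^n\}_{n \in \Z_+}} = [T_\vp]_{\{z^n\}_{n \in \Z_+}}^t,
\]
which is precisely the statement of Theorem \ref{thm: S toep 2}. I do not expect any real obstacle here: the entire proof is a routine transportation of Theorem \ref{thm: S toep 1} across the unitary $U$, reflecting the fact that conjugating $T_\vp$ by $U$ corresponds on the matrix side to changing from the basis $\{f_n\}_{n \in \Z_+}$ back to the canonical basis $\{z^n\}_{n \in \Z_+}$.

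If one preferred a proof bypassing Theorem \ref{thm: S toep 1}, an alternative route would be to unwind the defining equation $C T_\vp^* C = T_\vp$ using $C = U J_{H^2(\D)} = J_{H^2(\D)} U^*$, rewriting it as $U^* T_\vp U = J_{H^2(\D)} T_\vp^* J_{H^2(\D)}$, and then observing that the canonical conjugation $J_{H^2(\D)}$ intertwines operator matrices in the canonical basis with their entrywise conjugates, while the adjoint corresponds to the conjugate transpose. Combining these two effects produces the plain transpose $[T_\vp]_{\{z^n\}_{n \in \Z_+}}^t$ on the right-hand side. Either way, the essential content is the interplay between the unitary part $U$ of the canonical factorization of $C$ and the change-of-basis formula for operator matrices.
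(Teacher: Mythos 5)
Your proof is correct, but your main route is genuinely different from the paper's. You obtain Theorem \ref{thm: S toep 2} as a formal corollary of Theorem \ref{thm: S toep 1} via the single change-of-basis identity $\langle T_\vp f_j, f_i\rangle = \langle U^*T_\vp U z^j, z^i\rangle$, i.e.\ $[T_\vp]_{\{f_n\}_{n\in\Z_+}} = [U^*T_\vp U]_{\{z^n\}_{n\in\Z_+}}$, which is valid since $U$ is a linear unitary and $f_n = Uz^n$. The paper instead proves the statement directly: it rewrites $CT_\vp C = U(J_{H^2(\D)}T_\vp J_{H^2(\D)})U^*$ to deduce that $C$-symmetry is equivalent to $U^*T_\vp U = J_{H^2(\D)}T_\vp^* J_{H^2(\D)}$ (its equation \eqref{eqn: Ustar T U}), and then computes $\vp_{m-n} = \langle (U^*T_\vp U)z^m, z^n\rangle$ entrywise --- which is essentially the ``alternative route'' you sketch in your last paragraph. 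Your derivation is shorter and makes transparent that Theorems \ref{thm: S toep 1} and \ref{thm: S toep 2} are one and the same matrix identity read in two bases; what the paper's self-contained computation buys is the intermediate operator identity $U^*T_\vp U = J_{H^2(\D)}T_\vp^*J_{H^2(\D)}$, which it reuses afterwards (in Corollary \ref{cor: phi and u} and again in the polydisc setting of Theorem \ref{thm: S toep n var}). No gap in your argument.
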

\begin{proof}
Let $C$ be a conjugation on $H^2(\D)$, and let $T_\vp$, $\vp \in L^\infty(\T)$, be a Toeplitz operator. By the canonical factorization of $C$, there is a unique unitary $U \in \clb(H^2(\D))$ such that
\[
C = U J_{H^2(\D)} = J_{H^2(\D)} U^*.
\]
Therefore
\[
C T_\vp C = U (J_{H^2(\D)} T_\vp J_{H^2(\D)}) U^*,
\]
and hence, $T_\vp$ is $C$-symmetric if and only if
\begin{equation}\label{eqn: Ustar T U}
U^* T_\vp U = J_{H^2(\D)} T_\vp^* J_{H^2(\D)}.
\end{equation}
For each $m, n \in \Z_+$, \eqref{eqn: Toepl coeff} and the above equality imply
\[
\begin{split}
\vp_{m-n} & = \langle T_\vp z^n, z^m \rangle
\\
& =  \langle z^n, T_\vp ^* z^m \rangle
\\
& =  \langle J_{H^2(\D)} T_\vp ^* z^m, z^n \rangle
\\
& =  \langle J_{H^2(\D)} T_\vp ^* J_{H^2(\D)} z^m, z^n \rangle
\\
& =  \langle (U^* T_\vp U) z^m, z^n \rangle.
\end{split}
\]
Consequently, $T_\vp$ is $C$-symmetric if and only if $[T_\vp]_{\{z^n\}_{n \in \Z_+}}^t = [U^* T_{\vp} U]_{\{z^n\}_{n \in \Z_+}}$. This completes the proof of the theorem.
\end{proof}

In particular, if $T_\vp$ is $C$-symmetric, then $U^* T_{\vp} U$ is also a Toeplitz operator. Moreover, a closer inspection reveals that the same proof of the above theorem yields:

\begin{theorem}\label{thm: charac symm op}
Let $\clh$ be a Hilbert space, $\{e_n\}_{n \in \Lambda} \in B_\clh$ be the canonical basis, and let $C\in \clc(\clh)$. Then $T\in \mathcal{B}(\clh)$ is $C$-symmetric if and only if
\[
[T]_{\{e_n\}_{n \in \Lambda}}^t = [U^*TU]_{\{e_n\}_{n \in \Lambda}},
\]
where $C=UJ_\clh$ is the canonical factorization of the conjugation $C$.
\end{theorem}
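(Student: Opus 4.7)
The plan is to imitate the proof of Theorem \ref{thm: S toep 2} almost verbatim, observing that its argument used only two features of the Hardy-space setting: the canonical factorization of the conjugation, and the fact that the canonical conjugation fixes the canonical basis pointwise. Both features transfer unchanged to an arbitrary Hilbert space $\clh$ equipped with the canonical basis $\{e_n\}_{n\in\Lambda}$.

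First, I would reformulate the $C$-symmetric condition as a linear operator equation. By Proposition \ref{Prop: C = UC =CU*}, the canonical factorization yields $C = UJ_\clh$ and equivalently $C = J_\clh U^*$. Substituting both forms into $CT^*C = T$ gives
\[
U J_\clh T^* J_\clh U^* = T,
\]
and conjugating by $U$ rearranges this to
\[
J_\clh T^* J_\clh = U^* T U.
\]
So the proof will reduce to showing that this operator identity is equivalent to the matrix identity $[T]^t_{\{e_n\}_{n\in\Lambda}} = [U^*TU]_{\{e_n\}_{n\in\Lambda}}$.

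Second, I would establish this equivalence by comparing matrix entries. Since $\{e_n\}_{n\in\Lambda}$ is the canonical basis, $J_\clh e_n = e_n$ for every $n$, and invoking the conjugation identity \eqref{eqn: conj inner prod} gives, for all $i, j \in \Lambda$,
\[
\langle J_\clh T^* J_\clh e_j, e_i\rangle = \langle J_\clh T^* e_j, e_i\rangle = \langle J_\clh e_i, T^* e_j\rangle = \langle e_i, T^* e_j\rangle = \langle T e_i, e_j\rangle,
\]
which is precisely the $(i,j)$-entry of $[T]^t_{\{e_n\}_{n\in\Lambda}}$. The $(i,j)$-entry of $[U^*TU]_{\{e_n\}_{n\in\Lambda}}$ equals $\langle U^*TU e_j, e_i\rangle$ by definition. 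Since two bounded linear operators coincide if and only if their matrix representations with respect to an orthonormal basis agree, $J_\clh T^* J_\clh = U^*TU$ is equivalent to the desired matrix identity, completing the argument.

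The only delicate step is the bookkeeping with the anti-linear operator $J_\clh$: one must use $J_\clh e_n = e_n$ together with the symmetry $\langle J_\clh x, y\rangle = \langle J_\clh y, x\rangle$ in order to move $J_\clh$ past the inner product without picking up stray conjugates. No substantive new obstacle appears beyond what was already present in the proof of Theorem \ref{thm: S toep 2}; the present theorem is really the coordinate-free form of that earlier result.
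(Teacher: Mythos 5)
Your proposal is correct and follows essentially the same route as the paper: the paper derives Theorem \ref{thm: charac symm op} by observing that the proof of Theorem \ref{thm: S toep 2} carries over verbatim, namely reducing $C$-symmetry via the canonical factorization to the operator identity $J_\clh T^* J_\clh = U^*TU$ and then matching matrix entries using $J_\clh e_n = e_n$ and \eqref{eqn: conj inner prod}. Your entrywise computation and the final appeal to the fact that matrix entries determine a bounded operator are exactly the intended argument.
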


It is worthwhile to note that the canonical basis of $\clh$ can be chosen as per convenience or requirement. Therefore, the above observation is different from \eqref{eqn: classi: old symm}.

Now we proceed in unfolding the matrix equality of Theorem \ref{thm: S toep 2}. For each $n \in \Z_+$, we write
\[
U z^n = \sum_{j=0}^{\infty} u_{n,j} z^j,
\]
where $u_{n,j} = \langle U z^n, z^j \rangle$, $j \in \Z_+$. Since $Uz^n = C z^n$, we have
\[
u_{n,j} = \langle U z^n, z^j \rangle = \langle C z^n, z^j \rangle \qquad (n, j \in \Z_+).
\]
By Proposition \ref{prop: U is symmetric}, it follows that
\begin{equation}\label{eqn: u twist}
u_{m,j} = u_{j,m} \qquad (m, j \in \Z_+).
\end{equation}
Then, by \eqref{eqn: Toepl coeff} and Theorem \ref{thm: S toep 2}, $T_\vp$ is $C$-symmetric if and only if
\[
\vp_{m-n} = \langle U^* T_{\vp} U z^m, z^n \rangle \qquad (m, n \in \Z_+).
\]
For each $m, n \in \Z_+$, we compute
\[
\begin{split}
\langle U^* T_{\vp} U z^m, z^n \rangle & = \langle T_\vp U z^m, U z^n \rangle
\\
& = \sum_{i,j = 0}^{\infty} u_{m,j} \overline{u_{n,i}} \langle T_\vp z^j, z^i \rangle
\\
& = \sum_{i,j = 0}^{\infty} u_{m,j} \overline{u_{n,i}} \vp_{i-j}
\\
& = \sum_{i,j = 0}^{\infty} u_{m,j} \vp_{i-j} \overline{u_{i,n}}.
\end{split}
\]
Thus we have proved:

\begin{corollary}\label{cor: phi and u}
Let $\vp = \sum_{n= - \infty}^{\infty} \vp_n z^n \in L^{\infty}(\T)$, and let $C$ be a conjugation with canonical factorization $C = UJ_{H^2(\D)}$. The following are equivalent:
\begin{enumerate}
\item $T_{\vp}$ is $C$-symmetric.

\item $\vp_{m-n} = \langle U^* T_{\vp} U z^m, z^n \rangle$ for all $m, n \in \Z_+$.

\item $\vp_{m-n} = \sum_{i,j = 0}^{\infty} u_{m,j} \vp_{i-j} \overline{u_{i,n}}$ for all $m, n \in \Z_+$, where
\[
u_{i,j} := \langle U z^i, z^j \rangle = \langle C z^i, z^j \rangle \qquad (i, j \in \Z_+).
\]
\end{enumerate}
\end{corollary}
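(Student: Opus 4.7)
The proposal is to read Corollary \ref{cor: phi and u} as a direct unpacking of Theorem \ref{thm: S toep 2}, with the only substantive ingredient beyond bookkeeping being the symmetry relation \eqref{eqn: u twist} for the matrix entries of $U$. The plan has two stages: first establish (1)$\Leftrightarrow$(2) by translating the matrix identity of Theorem \ref{thm: S toep 2} into its $(m,n)$-entry form via the Toeplitz coefficient formula \eqref{eqn: Toepl coeff}, and then establish (2)$\Leftrightarrow$(3) by expanding $Uz^m$ and $Uz^n$ in the canonical basis.

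For (1)$\Leftrightarrow$(2), I would observe that $[T_\vp]^t_{\{z^n\}_{n\in\Z_+}}$ has $(n,m)$-entry equal to $\langle T_\vp z^n, z^m \rangle = \vp_{m-n}$ by \eqref{eqn: Toepl coeff}, while $[U^*T_\vp U]_{\{z^n\}_{n\in\Z_+}}$ has $(n,m)$-entry equal to $\langle U^*T_\vp U z^m, z^n\rangle$. Matching entries of these two formal matrices is exactly what Theorem \ref{thm: S toep 2} demands for $C$-symmetricity of $T_\vp$, so the equivalence of (1) and (2) is immediate.

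For (2)$\Leftrightarrow$(3), I would write $Uz^m = \sum_{j\geq 0} u_{m,j} z^j$ and $Uz^n = \sum_{i\geq 0} u_{n,i} z^i$ with $u_{p,q} = \langle Uz^p, z^q\rangle$. Then
\[
\langle U^* T_{\vp} U z^m, z^n \rangle = \langle T_\vp U z^m, U z^n \rangle = \sum_{i,j\geq 0} u_{m,j} \overline{u_{n,i}} \langle T_\vp z^j, z^i \rangle = \sum_{i,j\geq 0} u_{m,j} \overline{u_{n,i}}\, \vp_{i-j},
\]
using \eqref{eqn: Toepl coeff} in the last step. Applying \eqref{eqn: u twist} to replace $\overline{u_{n,i}}$ by $\overline{u_{i,n}}$ yields the double sum in (3). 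Identifying this with $\vp_{m-n}$ as in (2) gives the equivalence. Finally, the identity $u_{i,j} = \langle C z^i, z^j\rangle$ recorded in the statement is just the canonical factorization $C = UJ_{H^2(\D)}$ applied to the basis vector $z^i$, since $J_{H^2(\D)} z^i = z^i$.

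No step presents a genuine obstacle here; the whole corollary is a reformulation, and the only place where one must be careful is remembering to invoke the symmetry \eqref{eqn: u twist}, which is itself a consequence of Proposition \ref{prop: U is symmetric} applied to the unitary $U$ appearing in the canonical factorization of $C$.
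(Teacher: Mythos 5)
Your proposal is correct and follows essentially the same route as the paper: the paper also obtains (1)$\Leftrightarrow$(2) by reading off the entries of the matrix identity in Theorem \ref{thm: S toep 2} via \eqref{eqn: Toepl coeff}, and then derives (3) by expanding $Uz^m$, $Uz^n$ in the canonical basis and invoking the symmetry $u_{n,i}=u_{i,n}$ from \eqref{eqn: u twist}. Nothing is missing.
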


We now illustrate Corollary \ref{cor: phi and u} in the setting of Example \ref{illustration_example}. Fix $\theta, \xi \in \R$. Recall that
\[
(C_{\theta, \xi} f)(z)=e^{i\xi}\overline{f(e^{i\theta}\bar{z})} \qquad (f \in H^2(\D)),
\]
defines a conjugation on $H^2(\D)$ with respect to $\{e^{\frac{i\xi}{2}} e^{\frac{-i n \theta}{2}}z^n\} \in B_{H^2(\D)}$. The following was first proved by Ko and Lee \cite{Ko_Lee_SCTO}.

\begin{corollary}\label{cor: Ko Toeplitz}
Let $\varphi(z)= \sum_{n=-\infty}^{\infty}{\varphi}_n z^n \in L^\infty$. Then $T_\varphi$ is $C_{\xi,\theta}$-symmetric if and only if
\[
\vp_n = \varphi_{-n} e^{-in\theta} \qquad (n \in \Z).
\]
\end{corollary}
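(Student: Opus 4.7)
The plan is to derive this corollary as a direct application of Corollary \ref{cor: phi and u} to the specific conjugation $C_{\theta,\xi}$, whose canonical factorization is easy to read off from Example \ref{illustration_example}.

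First, I would compute the canonical factorization $C_{\theta,\xi} = U J_{H^2(\D)}$ explicitly. Since $C_{\theta,\xi} z^n = U J_{H^2(\D)} z^n = U z^n$, and the formula $(C_{\theta,\xi} f)(z) = e^{i\xi}\overline{f(e^{i\theta}\bar z)}$ yields
\[
C_{\theta,\xi}z^n = e^{i\xi}\,\overline{(e^{i\theta}\bar z)^n} = e^{i\xi}e^{-in\theta}z^n,
\]
the unitary $U$ is a diagonal operator in the canonical basis with $U z^n = e^{i\xi}e^{-in\theta} z^n$. Hence the matrix coefficients of Corollary \ref{cor: phi and u} are simply
\[
u_{n,j} = \langle Uz^n, z^j\rangle = e^{i\xi}e^{-in\theta}\,\delta_{n,j}.
\]

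Next I would substitute these coefficients into the identity $\vp_{m-n} = \sum_{i,j} u_{m,j}\vp_{i-j}\overline{u_{i,n}}$ from Corollary \ref{cor: phi and u}(3). Because each $u_{n,j}$ is supported on the diagonal, the double sum collapses to the single term with $j=m$ and $i=n$, yielding
\[
\vp_{m-n} = u_{m,m}\,\vp_{n-m}\,\overline{u_{n,n}} = \bigl(e^{i\xi}e^{-im\theta}\bigr)\vp_{n-m}\bigl(e^{-i\xi}e^{in\theta}\bigr) = e^{i(n-m)\theta}\vp_{n-m}.
\]
Setting $k = m-n$, this is precisely $\vp_k = \vp_{-k}e^{-ik\theta}$ for every $k \in \Z$, and reversing the steps gives the converse.

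There is no real obstacle here: the proof is essentially a book-keeping exercise once the explicit form of $U$ is in hand. The only mildly non-trivial observation is that $U$ is diagonal in $\{z^n\}_{n\in \Z_+}$, which is what causes the double sum in Corollary \ref{cor: phi and u}(3) to degenerate into a single scalar identity and thereby recover the clean symmetry $\vp_n = \vp_{-n}e^{-in\theta}$ of Ko and Lee.
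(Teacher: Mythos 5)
Your proposal is correct and follows essentially the same route as the paper: both identify $U z^n = e^{i\xi}e^{-in\theta}z^n$ from the canonical factorization and then invoke Corollary \ref{cor: phi and u} to reduce the symmetry condition to $e^{i(m-n)\theta}\vp_{m-n} = \vp_{n-m}$. The only cosmetic difference is that you substitute the diagonal coefficients $u_{n,j}$ into formulation (3), whereas the paper computes $\langle U^*T_\vp U z^n, z^m\rangle$ directly via formulation (2); the underlying computation is identical.
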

\begin{proof}
For each $n \in \Z_+$, note that
\[
U z^n = C_{\theta, \xi} J_{H^2(\D)} z^n = e^{i\xi} e^{-i n\theta}z^n.
\]
Hence
\[
\begin{split}
\inp{U^*T_{\vp}U z^n}{z^m} & = \inp{T_{\vp}U z^n}{Uz^m}
\\
& = \inp{T_{\vp}(e^{i\xi}e^{-in\theta}z^n)}{e^{i\xi}e^{-im\theta}z^m}
\\
& = e^{i(m-n)\theta}\inp{T_{\vp}z^{n}}{z^m}.
\end{split}
\]
Then, by Corollary \ref{cor: phi and u}, $T_{\vp}$ is $C_{\theta,\xi}$-symmetric if and only if
\beqn
e^{i(m-n)\theta} {\vp}_{m-n} = {\vp}_{n-m},
\eeqn
for all $m,n \in \Z_+$, or equivalently
\beqn
e^{i n\theta} {\vp}_{n} = {\vp}_{-n} \qquad (n \in \Z_+).
\eeqn
\end{proof}

Observe that the matrix representation $[T_\vp]$ of the $C_{\xi,\theta}$-symmetric Toeplitz operator $T_\varphi$ with respect to the basis $\{e^{\frac{i\xi}{2}} e^{\frac{-i n \theta}{2}}z^n\}_{n \in \Z_+} \in B_{H^2(\D)}$ is given by
\[
[T_\vp] = \begin{bmatrix}	\vp_0 & e^{\frac{i\theta}{2}} \vp_1 & e^{\frac{2i\theta}{2}} \vp_2 & e^{\frac{3i\theta}{2}} \vp_3 &\hdots
\\
e^{\frac{i\theta}{2}} \vp_1 & \vp_0 & e^{\frac{i\theta}{2}} \vp_2 & e^{\frac{2i\theta}{2}} \vp_1 & \ddots
\\
e^{\frac{2i\theta}{2}} \vp_2 & e^{\frac{i\theta}{2}} \vp_1 & \vp_0 &
e^{\frac{i\theta}{2}} \vp_1 & \ddots
\\
e^{\frac{3i\theta}{2}} \vp_3 & e^{\frac{2i\theta}{2}} \vp_2 & e^{\frac{i\theta}{2}} \vp_1 & \vp_0 & \ddots
\\
\vdots&\vdots&\vdots&\vdots&\ddots
\end{bmatrix}.
\]
Evidently, as also follows from the fact that $T_\vp$ is $C_{\xi,\theta}$-symmetric, we have that
\[
[T_\vp]^t = [T_\varphi].
\]

\begin{remark}
The above matrix representation was observed in \cite[p~26]{Ko_Lee_SCTO} for a special class of $L^\infty(\T)$ symbols.
\end{remark}

Now we turn to Example \ref{example Ferreira}. Recall that, for a fixed sequence $\{\alpha_n\}_{n \in \Z_+} \subseteq \T$, $C_{\alpha}$ is a conjugation on $H^2(\D)$, where
\[
C_{\alpha}(\sum_{n=0}^\infty a_nz^n) = \sum_{n=0}^\infty \bar{a}_n \alpha_n^{2} z^n,
\]
for all $\sum_{n=0}^\infty a_nz^n \in H^2(\D)$. Again
\[
U z^n = (C_{\alpha}J_{H^2(\D)}) z^n = C_\alpha z^n = \alpha_n^{2} z^n \qquad (n \in \Z_+).
\]
This implies
\[
\begin{split}
\inp{U^*T_{\vp}U(z^m)}{z^n} & =  \inp{T_{\vp} \alpha_n^{2} z^m}{\alpha_m^{2}z^n}
\\
& = \alpha_{m}^{2}\bar{\alpha}_n^{2}\inp{T_{\vp}z^{m}}{z^n}
\\
& = \alpha_{m}^{2}\bar{\alpha}_n^{2} \vp_{n-m},
\end{split}
\]
and hence, by Corollary \ref{cor: phi and u}, $T_\vp$ is $C_\alpha$-symmetric if and only if
\begin{equation}\label{eq: example C alpha}
\vp_{m-n} = \alpha_{m}^{2}\bar{\alpha}_n^{2} \vp_{n-m} \qquad (m, n \in \Z_+).
\end{equation}
On the other hand
\[
\begin{split}
u_{i,j} & = \langle U z^i, z^j \rangle
\\
& = \begin{cases}
\alpha_i^{2} & \mbox{if } i = j
\\
0 & \mbox{otherwise},
\end{cases}
\end{split}
\]
and hence, in this case, for each fixed $m, n \in \Z_+$, we have
\[
\sum_{i,j=0}^{\infty} u_{m,j} \vp_{i-j} \overline{u_{i,n}} = \alpha_{m}^{2}\bar{\alpha}_n^{2} \vp_{n-m},
\]
which again, by Corollary \ref{cor: phi and u}, verifies that $T_\vp$ is $C_\alpha$-symmetric if and only if \eqref{eq: example C alpha} holds. Therefore, we have:

\begin{corollary}\label{example Junior}
Let $\varphi(z)= \sum_{n=-\infty}^{\infty}{\varphi}_n z^n \in L^\infty(\T)$. Then $T_\varphi$ is $C_{\alpha}$-symmetric if and only if
\[
{\varphi}_{m-n} = \alpha_m^2 \bar{\alpha}_n^2 {\varphi}_{n-m} \qquad (m, n \in \Z_+).
\]
\end{corollary}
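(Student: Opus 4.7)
The plan is to apply Corollary \ref{cor: phi and u} directly, exploiting the fact that the unitary part of the canonical factorization of $C_\alpha$ is diagonal in the canonical basis. First I would identify the unitary $U$ in $C_\alpha = U J_{H^2(\D)}$. Since $J_{H^2(\D)}$ fixes each $z^n$, we have
\[
U z^n = C_\alpha J_{H^2(\D)} z^n = C_\alpha z^n = \alpha_n^2\, z^n,
\]
so $U$ is the diagonal unitary with entries $u_{i,j} = \langle Uz^i, z^j\rangle = \alpha_i^2\, \delta_{i,j}$.

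Next, I would plug these entries into condition (3) of Corollary \ref{cor: phi and u}. Since $u_{m,j}$ vanishes unless $j=m$ and $u_{i,n}$ vanishes unless $i=n$, the double sum collapses to a single term:
\[
\sum_{i,j=0}^{\infty} u_{m,j}\, \varphi_{i-j}\, \overline{u_{i,n}} \;=\; u_{m,m}\, \varphi_{n-m}\, \overline{u_{n,n}} \;=\; \alpha_m^2\, \bar{\alpha}_n^2\, \varphi_{n-m},
\]
where I use $|\alpha_n|=1$ to write $\overline{\alpha_n^2} = \bar{\alpha}_n^2$. By Corollary \ref{cor: phi and u}, $T_\varphi$ is $C_\alpha$-symmetric if and only if this quantity equals $\varphi_{m-n}$ for all $m, n \in \Z_+$, which is exactly the claimed identity.

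There is no substantive obstacle here: the entire argument is a one-line consequence of Corollary \ref{cor: phi and u} once one observes that $C_\alpha$ is diagonalized by the canonical basis. The only item worth double-checking is the unimodularity of $\alpha_n$, which is given, and ensures the passage $\overline{u_{n,n}} = \bar{\alpha}_n^2$ rather than $\bar{\alpha}_n^{-2}$.
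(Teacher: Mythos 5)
Your argument is correct and is essentially the paper's own proof: the paper likewise computes $Uz^n = C_\alpha z^n = \alpha_n^2 z^n$, notes $u_{i,j}=\alpha_i^2\delta_{i,j}$, and collapses the double sum in condition (3) of Corollary~\ref{cor: phi and u} (as well as evaluating $\langle U^*T_\vp U z^m, z^n\rangle$ directly via condition (2)) to obtain $\alpha_m^2\bar{\alpha}_n^2\vp_{n-m}$. One trivial remark: unimodularity of $\alpha_n$ is not actually needed for $\overline{\alpha_n^{2}}=\bar{\alpha}_n^{2}$, since complex conjugation is multiplicative; it would only be needed to rewrite this as $\alpha_n^{-2}$.
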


This was observed in \cite[Theorem 9]{ferreira_junior}. However, along with new proof, the present version fixes an error in the statement \cite[Theorem 9]{ferreira_junior}.

\newsection{Yet another characterization}\label{sec: Conj and Toeplitz}

We continue to follow the analysis of Section \ref{sec: rep of conj} and present our final characterization of symmetric Toeplitz operators. The key is the representations of conjugations on $H^2(\D)$ with respect to the canonical basis
\[
\zeta := \{z^n\}_{n \in \Z_+} \in B_{H^2(\D)}.
\]
Following our usual convention, given $\{f_n\}_{n \in \Z_+} \in B_{H^2(\D)}$, we write
\[
c_{n,m}^{(\zeta)}=\displaystyle\sum_{k=0}^\infty \langle f_k, z^n \rangle \langle f_k, z^m \rangle \qquad (m, n \in \Z_+).
\]
With this notation at hand we can now state the main result of this section.

\begin{theorem}\label{thm_main}
Let $\varphi(z) = \sum_{n=-\infty}^{\infty} {\varphi}_{n} z^n \in L^\infty(\T)$, and let $C$ be a conjugation on $H^2(\D)$ corresponding to $\{f_n\}_{n \in \Z_+} \in B_{H^2(\D)}$. Then $T_\varphi$ is $C$-symmetric if and only if
\[
\sum_{n=0}^{\infty}\overline{{\varphi}_{n-k}}c_{n,j}^{(\zeta)} = \sum_{n=1}^{\infty}\overline{{\varphi}_n}c_{k, n+j}^{(\zeta)} + \sum_{l=0}^{j}\overline{{\varphi}_{-l}}c_{k,j-l}^{(\zeta)},
\]
for all $j,k\in \Z_+$.
\end{theorem}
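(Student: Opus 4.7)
The plan is to reduce the $C$-symmetry condition to a coefficient-by-coefficient identity and then invoke the explicit representation of $C$ given by Corollary \ref{cor: rep of C} (with $\tau = \zeta$).

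First I would observe that $T_\varphi$ is $C$-symmetric if and only if $CT_\varphi z^k = T_\varphi^* C z^k$ for every $k \in \Z_+$, which in turn is equivalent to equality of the $z^j$-Fourier coefficients of both sides for every $j \in \Z_+$. Thus the whole statement reduces to two parallel computations, each producing an explicit Fourier series.

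For the left-hand side, I would compute $T_\varphi z^k = \sum_{m=0}^{\infty} \varphi_{m-k} z^m$ using the identity $\langle T_\varphi z^p, z^q\rangle = \varphi_{q-p}$ recorded in \eqref{eqn: Toepl coeff}, then apply Corollary \ref{cor: rep of C} in the form
\[
C\Big(\sum_m a_m z^m\Big) = \sum_m \sum_j \bar{a}_m\, c_{m,j}^{(\zeta)}\, z^j
\]
with $a_m = \varphi_{m-k}$. The $z^j$-coefficient of $CT_\varphi z^k$ is then simply $\sum_{n=0}^{\infty} \overline{\varphi_{n-k}}\, c_{n,j}^{(\zeta)}$, which is exactly the left-hand side of the claimed identity.

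For the right-hand side, I would first expand $C z^k = \sum_{m=0}^{\infty} c_{k,m}^{(\zeta)} z^m$ (plug $a_p = \delta_{pk}$ into the same formula), and compute $T_\varphi^* z^m = \sum_{j=0}^{\infty} \overline{\varphi_{m-j}}\, z^j$ (noting $T_\varphi^\ast = T_{\bar\varphi}$ and using \eqref{eqn: Toepl coeff} again). Combining these, the $z^j$-coefficient of $T_\varphi^* C z^k$ comes out to $\sum_{m=0}^{\infty} c_{k,m}^{(\zeta)}\, \overline{\varphi_{m-j}}$. The only remaining task is to split this sum at $m=j$ and reindex: the tail $m \geq j$ becomes $\sum_{n=0}^{\infty} \overline{\varphi_n}\, c_{k,n+j}^{(\zeta)}$ via $n = m-j$, while the head $0 \leq m < j$ becomes $\sum_{l=1}^{j} \overline{\varphi_{-l}}\, c_{k,j-l}^{(\zeta)}$ via $l = j-m$. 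Absorbing the $n=0$ contribution $\overline{\varphi_0}\, c_{k,j}^{(\zeta)}$ into the second sum as the $l=0$ term regroups everything into the form claimed.

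The argument is essentially bookkeeping, so I do not expect any conceptual obstacle; the only point requiring care is the re-indexing of the two partial sums in the last step to land on the exact combinatorial form stated, and the implicit verification that Fubini applies to the double series arising in the computation of $CT_\varphi z^k$, which is justified by $\varphi \in L^\infty(\T)$ giving an $\ell^2$ row of coefficients together with $\{c_{n,j}^{(\zeta)}\}_{n} \in \ell^2$ (as noted following Proposition \ref{conju_Charac_theorem}).
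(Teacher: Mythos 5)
Your proposal is correct and follows essentially the same route as the paper: reduce $C$-symmetry to $CT_\varphi z^k = T_\varphi^* C z^k$, expand both sides in the canonical basis using the representation $C(\sum a_n z^n)=\sum_n\sum_m \bar a_n c_{n,m}^{(\zeta)} z^m$, and match $z^j$-coefficients, with the same split-and-reindex at $m=j$ producing the two sums on the right. The only cosmetic difference is that the paper computes $T_\varphi^* C z^k$ by multiplying the Laurent series of $\bar\varphi$ against $Cz^k$ and projecting, whereas you apply $T_\varphi^*$ termwise to $Cz^k$; the bookkeeping is identical.
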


\begin{proof}
By Proposition \ref{conju_Charac_theorem}, we know that
\[
C(\sum_{n=0}^\infty a_n z^n) = \sum_{n=0}^\infty \sum_{m=0}^\infty \bar{a}_n c_{n, m}^{(\zeta)} z^m,
\]
for all $\{a_n\} \in \ell^2$, where  $\zeta = \{z^n\}_{n \geq 0} \in B_{H^2(\D)}$ and $c_{n,m}^{(\zeta)}$ are defined as above. Note that $CT_\varphi C = T_{\vp}^*$ if and only if $C T_\varphi = T_{\vp}^* C$, which is equivalent to the condition that
\[
C T_\varphi z^k = T_{\vp}^* C z^k \qquad (k \in \Z_+).
\]
Fix $k \in \Z_+$. We compute
\[
\begin{split}
CT_\varphi z^k & = CP_{H^2(\mathbb{D})} (\sum_{n=-\infty}^{\infty}  {\varphi}_n z^{n+k})
\\
& = C (\sum_{n=-k}^{\infty} {\varphi}_n z^{n+k})
\\
& = \sum_{n=-k}^{\infty} \sum_{m=0}^{\infty}\overline{{\varphi}_n}c_{n+k, m}^{(\zeta)}z^m
\\
& = \sum_{j=0}^{\infty} (\sum_{n=0}^{\infty}\overline{{\varphi}_{n-k}}c_{n, j}^{(\zeta)})z^j.
\end{split}
\]
On the other hand
\[
\begin{split}
T^*_\varphi Cz^k& = T^*_\varphi (\sum_{m=0}^{\infty}c_{k, m}^{(\zeta)}z^m)
\\
& = P_{H^2(\mathbb{D})} (\sum_{n=-\infty}^{\infty}\overline{{\varphi}_n}z^{-n}\sum_{m=0}^{\infty}c_{k, m}^{(\zeta)}z^m)
\\
& = P_{H^2(\mathbb{D})} (\sum_{n=-\infty}^{\infty}\overline{{\varphi}_{-n}}z^{n}\sum_{m=0}^{\infty}c_{k, m}^{(\zeta)}z^m)
\\
& = \sum_{n=1}^{\infty}\overline{{\varphi}_n}\sum_{m=n}^{\infty}c_{k, m}^{(\zeta)}z^{m-n} + \sum_{n=0}^{\infty}\overline{{\varphi}_{-n}}\sum_{m=0}^{\infty}c_{k, m}^{(\zeta)}z^{m+n}
\\
& = \sum_{j=0}^{\infty} (\sum_{n=1}^{\infty}\overline{{\varphi}_n}c_{k, n+j}^{(\zeta)}+\sum_{l=0}^{j}\overline{{\varphi}_{-l}}c_{k, j-l}^{(\zeta)})z^j.
\end{split}
\]
Therefore, $C T_\varphi z^k = T_{\vp}^* C z^k$ if and only if
\[
\sum_{j=0}^{\infty} (\sum_{n=0}^{\infty}\overline{{\varphi}_{n-k}} c_{n, j}^{(\zeta)})z^j = \sum_{j=0}^{\infty} (\sum_{n=1}^{\infty}\overline{{\varphi}_n} c_{k, n+j}^{(\zeta)}+\sum_{l=0}^{j}\overline{{\varphi}_{-l}}c_{k, j-l}^{(\zeta)})z^j.
\]
By equating the coefficient of $z^j$ on either side of the above equality, we find that $C T_\varphi = T_{\vp}^* C$ if and only if
\[
\sum_{n=0}^{\infty}\overline{{\varphi}_{n-k}} c_{n, j}^{(\zeta)} =  \sum_{n=1}^{\infty}\overline{{\varphi}_n} c_{k, n+j}^{(\zeta)}+\sum_{l=0}^{j}\overline{{\varphi}_{-l}} c_{k, j-l}^{(\zeta)},
\]
for all $j,k\in \Z_+$, which completes the proof of the theorem.
\end{proof}

Like all the previous characterizations, Theorem \ref{thm_main} also unifies all the existing results on classifications of specific classes of conjugate Toeplitz operators corresponding to specific classes of conjugations. On the other hand, the above result transfers the problem of characterization of symmetric Toeplitz operators to infinitely many equations, which also explains the challenges to the classification problem and in all the partial findings in the literature (cf. \cite{Guo, Ko_Lee_SCTO}).

One can summarize the infinitely many conditions in the conclusion of Theorem \ref{thm_main} in a simpler form. First, we rewrite the equality
\[
\sum_{n=0}^{\infty}\overline{{\varphi}_{n-k}}c_{n, j}^{(\zeta)}- \sum_{n=1}^{\infty}\overline{{\varphi}_n}c_{k, n+j}^{(\zeta)}= \sum_{l=0}^{j}\overline{{\varphi}_{-l}}c_{k, j-l}^{(\zeta)},
\]
as
\[
\sum_{p=0}^{k}\overline{{\varphi}_{-p}} c_{k-p, j}^{(\zeta)}+\sum_{n=1}^{\infty} (c_{n+k, j}^{(\zeta)}-c_{k, n+j}^{(\zeta)}) \overline{{\varphi}_n} = \sum_{l=0}^{j}\overline{{\varphi}_{-l}}c_{k, j-l}^{(\zeta)},
\]
which implies
\[
\sum_{n=1}^{\infty} (c_{n+k, j}^{(\zeta)}-c_{k, n+j}^{(\zeta)}) \overline{{\varphi}_n} = \sum_{l=0}^{j}\overline{{\varphi}_{-l}} c_{k, j-l}^{(\zeta)}-\sum_{p=0}^{k}\overline{{\varphi}_{-p}} c_{k-p, j}^{(\zeta)},
\]
for all $j,k\in \Z_+$. In particular, $j=k$ yields
\[
\sum_{n=1}^{\infty}(c_{n+k, k}^{(\zeta)} - c_{k, n+k}^{(\zeta)}) \overline{{\varphi}_n} = \sum_{p=0}^{k}(c_{k, k-p}^{(\zeta)} - c_{k-p, k}^{(\zeta)})\overline{{\varphi}_{-p}}.
\]
And, in general, if we set
\[
\Phi_+ = (\overline{{\varphi}_1},\overline{{\varphi}_2} ,\overline{{\varphi}_3} , \ldots ),
\]
and
\[
\Phi_- = (\overline{{\varphi}_{-1}}, \overline{{\varphi}_{-2}}, \overline{{\varphi}_{-3}},\ldots ),
\]
then the above set of equalities can be expressed in the following formal matrix equation
\begin{equation}\label{eqn: matrix repr Phi}
X(k)\Phi_+=Y(k)\Phi_- \qquad (k \in \Z_+),
\end{equation}
where
\[
X(k)=\begin{bmatrix}
(c_{1+k,0}^{(\zeta)}-c_{k, 1}^{(\zeta)}) & (c_{2+k, 0}^{(\zeta)}-c_{k,2}^{(\zeta)}) & (c_{3+k, 0}^{(\zeta)}-c_{k,3}^{(\zeta)}) & \ddots
\\
(c_{1+k, 1}^{(\zeta)} - c_{k, 2}^{(\zeta)}) & (c_{2+k, 1}^{(\zeta)}-c_{k, 3}^{(\zeta)}) & (c_{3+k,1}^{(\zeta)} - c_{k, 4}^{(\zeta)}) & \ddots
\\
(c_{1+k,2}^{(\zeta)} - c_{k,3}^{(\zeta)}) & (c_{2+k,2}^{(\zeta)} - c_{k,4}^{(\zeta)}) & (c_{3+k,2}^{(\zeta)} - c_{k, 5}^{(\zeta)})& \ddots
\\
\vdots&\vdots&\vdots& \ddots
\\
(c_{1+k, k}^{(\zeta)}-c_{k,1+k}^{(\zeta)}) & (c_{2+k,k}^{(\zeta)} - c_{k, 2+k}^{(\zeta)}) & (c_{3+k, k}^{(\zeta)}-c_{k, 3+k}^{(\zeta)}) & \ddots
\\
\vdots&\vdots&\vdots& \ddots
\end{bmatrix},
\]
and
\[
Y(k)=\begin{bmatrix} -c_{k-1,0}^{(\zeta)} & -c_{k-2,0}^{(\zeta)} & -c_{k-3,0}^{(\zeta)} & \cdots & -c_{0,0}^{(\zeta)}&0&0&\ddots
\\
(c_{k,0}^{(\zeta)} - c_{k-1,1}^{(\zeta)}) & - c_{k-2,1}^{(\zeta)} & -c_{k-3,1}^{(\zeta)} & \cdots & -c_{0,1}^{(\zeta)}&0&0&\ddots
\\
(c_{k,1}^{(\zeta)} - c_{k-1,2}^{(\zeta)}) & (c_{k,0}^{(\zeta)} - c_{k-2, 2}^{(\zeta)}) & -c_{k-3, 2}^{(\zeta)} & \cdots & -c_{0, 2}^{(\zeta)}&0&0&\ddots
\\
\vdots&\vdots&\vdots&\vdots&\vdots&\vdots&\vdots& \ddots
 \\
(c_{k, k-1}^{(\zeta)} - c_{k-1,k}^{(\zeta)}) & (c_{k, k-2}^{(\zeta)} - c_{k-2, k}^{(\zeta)}) & (c_{k, k-3}^{(\zeta)}-c_{k-3,k}^{(\zeta)})&\cdots & (c_{k, 0}^{(\zeta)} - c_{0,k}^{(\zeta)}) & 0& 0 &\ddots
\\
\vdots&\vdots&\vdots&\vdots&\vdots&\vdots&\vdots & \ddots
\end{bmatrix},
\]
for all $k \in \Z_+$. We summarize this as:

\begin{corollary}
A Toeplitz operator $T_\varphi$ with symbol $\varphi= \sum_{n=-\infty}^{\infty} {\varphi}_n z^n \in L^\infty(\T)$ is $C$-symmetric if and only if
\[
X(n)\Phi_+=Y(n)\Phi_- \qquad (n \in \Z_+).
\]
\end{corollary}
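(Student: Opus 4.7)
The plan is to derive this as a direct restatement of Theorem~\ref{thm_main} in matrix form; almost all the algebra has already been foreshadowed in the discussion preceding the corollary, so the proof is essentially a bookkeeping argument that identifies the resulting infinite linear system with the matrix product $X(k)\Phi_+=Y(k)\Phi_-$. I would start by invoking Theorem~\ref{thm_main}: $T_\vp$ is $C$-symmetric if and only if
\[
\sum_{n=0}^{\infty}\overline{\vp_{n-k}}\,c_{n,j}^{(\zeta)}
=\sum_{n=1}^{\infty}\overline{\vp_n}\,c_{k,n+j}^{(\zeta)}+\sum_{l=0}^{j}\overline{\vp_{-l}}\,c_{k,j-l}^{(\zeta)},
\]
for all $j,k\in\Z_+$.

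The first step would be to split the left-hand sum at the threshold $n=k$. For $0\le n\le k$, write $p=k-n$ so that the term $\overline{\vp_{n-k}}c_{n,j}^{(\zeta)}$ becomes $\overline{\vp_{-p}}c_{k-p,j}^{(\zeta)}$, contributing negative-index symbol coefficients. For $n\ge k+1$, substitute $m=n-k\ge 1$ so that the term becomes $\overline{\vp_m}c_{m+k,j}^{(\zeta)}$, a positive-index contribution. After these two substitutions, move the positive-index terms to the left and the negative-index terms to the right; this yields exactly
\[
\sum_{n=1}^{\infty}\bigl(c_{n+k,j}^{(\zeta)}-c_{k,n+j}^{(\zeta)}\bigr)\overline{\vp_n}
=\sum_{l=0}^{j}\overline{\vp_{-l}}\,c_{k,j-l}^{(\zeta)}-\sum_{p=0}^{k}\overline{\vp_{-p}}\,c_{k-p,j}^{(\zeta)},
\]
which is precisely the identity already derived in the paragraph preceding the corollary.

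The second step is to vary $j$ (for fixed $k$) and record these as the rows of an infinite matrix equation. The left-hand side is the $j$-th entry of $X(k)\Phi_+$, since its $(j,n)$-entry is $c_{n+k,j}^{(\zeta)}-c_{k,n+j}^{(\zeta)}$ acting on $\overline{\vp_n}$ for $n\ge 1$, matching the displayed form of $X(k)$. The right-hand side collects the contributions of $\overline{\vp_{-l}}$ for $l=0,\ldots,j$ and of $\overline{\vp_{-p}}$ for $p=0,\ldots,k$; grouping by the negative index and tracking the sign of each coefficient recovers the $j$-th row of $Y(k)\Phi_-$, including the zero tail (since only finitely many $\overline{\vp_{-l}}$ appear for each row). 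Running this identification for every $j\in\Z_+$ gives $X(k)\Phi_+=Y(k)\Phi_-$, and letting $k$ range over $\Z_+$ encodes all of the conditions from Theorem~\ref{thm_main}.

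The main obstacle is purely combinatorial bookkeeping: keeping the two shifts $n\mapsto k-n$ and $n\mapsto n-k$ straight, aligning them with the exact definitions of the entries of $X(k)$ and $Y(k)$ (especially the mixed entries of $Y(k)$ where a term of the form $c_{k,j-p}^{(\zeta)}-c_{k-p,j}^{(\zeta)}$ appears for $0\le p\le k$, while for $p>k$ only the $c_{k,j-l}^{(\zeta)}$ contribution remains and must be placed in the correct column of $Y(k)$), and verifying that the zeros in $Y(k)$ correctly encode the absence of $\overline{\vp_{-l}}$ contributions beyond the range $l\le j$. Once this indexing is verified, the equivalence with $X(k)\Phi_+=Y(k)\Phi_-$ for all $k\in\Z_+$ is immediate, and the corollary follows.
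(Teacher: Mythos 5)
Your proposal is correct and follows essentially the same route as the paper: the text immediately preceding the corollary performs exactly this splitting of the left-hand sum of Theorem~\ref{thm_main} at $n=k$, the reindexings $p=k-n$ and $m=n-k$, and the regrouping into $X(k)\Phi_+=Y(k)\Phi_-$. The only detail worth making explicit in the bookkeeping is that the $l=0$ and $p=0$ terms on the right-hand side both equal $\overline{\vp_0}\,c_{k,j}^{(\zeta)}$ and cancel, which is why $\Phi_-$ can start at index $-1$.
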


We wish to point out that the results of this section do not depend on the canonical factorizations of conjugations. The implication of the results of this section for (finite) Toeplitz matrices will be explained in Theorem \ref{finiteToeplitzCS}.

\newsection{Toeplitz operators on the polydisc}\label{sec: polydisc}

In this section, we characterize complex symmetric Toeplitz operators on the Hardy space over the unit polydisc $\D^d$, where $\D^d = \{\z = (z_1, \ldots, z_d) \in \C^d: |z_j| < 1, j=1, \ldots, d\}$. We consider from now on $d$ a natural number such that $d \geq 2$. We carefully adopt the notion of $S$-Toeplitz operators in several variables and extend the classification results of symmetric Toeplitz operators of Section \ref{sec: S Toeplitz and symm} to $\D^d $.

Recall that $H^2(\D^d)$, the Hardy space over $\D^d$, is the Hilbert space
of all analytic functions $f = \sum_{\bk \in \Z_+^d} a_{\bk} z^{\bk}$ on $\D^d$ such that (cf. \cite{Rudin})
\[
\|f\|_{H^2(\mathbb{D}^d)} = (\sum_{\bk \in \Z_+^d} |a_{\bk}|^2)^{\frac{1}{2}} < \infty.
\]
Here, $\bk = (k_1, \ldots, k_d) \in \Z_+^d$ and $z^{\bk} = z_1^{k_1} \cdots z_d^{k_d}$. As in the case of one variable, we often identify $H^2(\mathbb{D}^d)$ (via radial limits of square summable analytic functions) with the closed subspace of all functions $f = \sum_{\bk \in \Z^d} a_{\bk} z^{\bk} \in L^2(\T^d)$ such that $a_{\bk} = 0$ whenever $k_j < 0$ for some $j=1, \ldots, d$. Also recall that
\[
\{z^{\bk}\}_{\bk \in \Z_+^d} \in B_{H^2(\D^d)} \text{ and } \{z^{\bk}\}_{\bk \in \Z^d} \in B_{L^2(\T^d)}.
\]
Denote by $P_{H^2(\mathbb{D}^n)}$ the orthogonal projection from $L^2(\mathbb{T}^d)$ onto $H^2(\mathbb{D}^d)$, that is
\[
P_{H^2(\mathbb{D}^d)} (\sum_{\bk \in \Z^d} a_{\bk} z^{\bk}) = \sum_{\bk \in \Z_+^d} a_{\bk} z^{\bk},
\]
for all $\sum_{\bk \in \Z^d} a_{\bk} z^{\bk} \in L^2(\T^d)$. As in the case of one variable, the Toeplitz operator $T_{\varphi}$ on $H^2(\mathbb{D}^d)$ with symbol $\varphi \in L^\infty(\T^d)$ is defined by
\[
T_{\varphi} = P_{H^2(\mathbb{D}^d)} L_{\varphi}|_{H^2(\D^d)},
\]
where $L_{\varphi}$ is the Laurent operator on $L^2(\mathbb{T}^d)$. In other words, $T_{\varphi} f = P_{H^2(\mathbb{D}^d)} ({\varphi} f)$ for all $f \in H^2(\mathbb{D}^d)$. Recall \cite[Theorem 3.1]{maji_sarkar_srijan} that an operator $T \in \clb(H^2(\D^n))$ is Toeplitz if and only if
\[
M^*_{z_{i}} T M_{z_i} = T \qquad (i = 1, \ldots, d).
\]
Note that the $(M_{z_1}, \ldots, M_{z_d})$ is a $d$-tuple of commuting shifts on $H^2(\D^d)$, where
\[
M_{z_i} f = z_i f \qquad (f \in H^2(\D^d)),
\]
and $i=1, \ldots, d$. For more on Toeplitz operators on the polydisc, we refer the reader to \cite{Pradhan et al, maji_sarkar_srijan} and the reference therein. Before going further, we need a lemma in the line of Corollary \ref{cor: CMzC shift}.

Before going further, we record a simple observation: Let $X \in \clb(\clh)$, and let $C \in \clc(\clh)$. Then
\[
(CXC)^{*n} = C X^{*n} C \qquad (n \in \Z_+).
\]
Indeed, for each $f, g \in \clh$, applying \eqref{eqn: conj inner prod} repeatedly, we find
\[
\begin{split}
\langle (CXC)^* f, g \rangle & = \langle f, (CXC) g \rangle
\\
& = \langle XC g, Cf \rangle
\\
& = \langle C g, X^* C f \rangle
\\
& = \langle CX^*  C f, g \rangle.
\end{split}
\]
This implies that $(CXC)^* = C X^*C$, and then, by induction, we conclude that
\begin{equation}\label{eqn: CXC star n}
(CXC)^{*n} = C X^{*n} C \qquad (n \in \Z_+).
\end{equation}
We are now ready for a general version of Lemma \ref{lemma: C = UC =CU*}. Recall that the multiplicity of a shift $S$ is the number (see the first paragraph of Section \ref{sec: S Toeplitz})
\[
\text{mult} S = \text{dim} (\ker S^*) \in \N \cup \{\infty\}.
\]

\begin{lemma}\label{lemma: conju_shift-new}
Let $C \in \clc(\clh)$, and let $S \in \clb(\clh)$ be a shift. Then $CSC$ is a shift, and
\[
\text{mult} (CSC) = \text{mult} S.
\]
\end{lemma}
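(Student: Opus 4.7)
My plan is to verify directly the two defining properties of a shift for $T := CSC$ (isometry and pureness), and then establish the multiplicity equality by identifying $\ker T^*$ with $C(\ker S^*)$.

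First, I observe that $T$ is \emph{linear}, being the composition of two anti-linear maps with one linear map. The isometry of $T$ is routine: for $f \in \clh$, applying successively that $C$ is isometric, $S$ is an isometry, and $C$ is isometric yields $\|Tf\| = \|SCf\| = \|Cf\| = \|f\|$. For pureness, I would invoke identity \eqref{eqn: CXC star n} to obtain $T^{*n} = (CSC)^{*n} = CS^{*n}C$, so that for any $h \in \clh$,
\[
\|T^{*n}h\| = \|CS^{*n}Ch\| = \|S^{*n}Ch\| \raro 0
\]
as $n \raro \infty$, since $S$ is pure. Together, these two properties show $T$ is a shift.

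For the multiplicity, I would use that $T^* = CS^*C$ together with $C^2 = I$ to see that $h \in \ker T^*$ iff $S^*(Ch) = 0$, i.e., $Ch \in \ker S^*$, which gives $\ker T^* = C(\ker S^*)$. Since $C$ is anti-linear with $\langle Cx, Cy \rangle = \langle y, x \rangle$, the restriction $C|_{\ker S^*}$ sends an orthonormal basis of $\ker S^*$ to an orthonormal system in $\clh$ that spans $C(\ker S^*)$. Moreover $C(\ker S^*)$ is a genuine linear subspace, since for $x, y \in \ker S^*$ and $\alpha \in \C$ one has $\alpha Cx + Cy = C(\bar\alpha x + y) \in C(\ker S^*)$. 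It follows that $\dim \ker T^* = \dim \ker S^*$, which is the desired equality of multiplicities.

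I do not anticipate a serious obstacle; the only subtle point is the careful handling of anti-linearity, specifically confirming that $C(\ker S^*)$ is a linear subspace and that an anti-linear isometric bijection preserves Hilbert-space dimension. Both of these follow immediately from the inner-product identity \eqref{eqn: conj inner prod}, and the remainder of the argument is a direct consequence of Section~2 together with \eqref{eqn: CXC star n}.
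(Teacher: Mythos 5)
Your proposal is correct and follows essentially the same route as the paper: isometry and pureness of $CSC$ via the identity $(CSC)^{*n}=CS^{*n}C$, and the identification $\ker (CSC)^*=C(\ker S^*)$ to equate multiplicities. Your extra remarks verifying that $C(\ker S^*)$ is a linear subspace and that the anti-linear isometry preserves dimension simply make explicit what the paper asserts without comment.
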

\begin{proof}
We already know that $(CSC)^{*n} = C S^{*n} C$ for all $n \in \Z_+$. For each $f \in \clh$, we have
\[
\|(CSC)^{*n} f \| = \|(CS^{*n}C) f \| = \|S^{*n} (Cf)\| \longrightarrow 0,
\]
as $n \raro \infty$. This and
\[
\|(CSC)f\| = \|SCf\| = \|Cf\| = \|f\|,
\]
imply that $CSC$ is a shift. Now we set
\[
\clw := C(\ker S^*).
\]
Since $C$ is a conjugation, it follows that $C(\clw) = \ker S^*$, and
\[
\text{dim} \clw = \text{dim} (\ker S^*).
\]
In particular, if $f = Cg$ for some $g \in \ker S^*$, then $Cf = g$, and hence
\[
(CS^*C) f = CS^* g = 0,
\]
which implies that $\clw \subseteq \ker(CS^*C)$. On the other hand, if $f\in \ker (CS^*C)$, then
\[
S^*Cf = 0,
\]
that is, $Cf \in \ker S^*$. Since $\clw = C(\ker S^*)$, we conclude that
\[
f = C^2f \in \clw.
\]
Therefore, $\ker (CS^*C) \subseteq \clw$, which implies that $\ker (CS^*C) = \clw$. In particular
\[
\text{dim} \clw = \text{dim} (\ker S^*) = \text{dim} (\ker (CS^*C)).
\]
This completes the proof of the lemma.
\end{proof}

A $d$-tuple of commuting isometries $\bm{S} = (S_1, \ldots, S_d)$ on a Hilbert space $\clh$ is said to be \textit{doubly commuting shift} if $S_t$ is a shift, $t = 1, \ldots, d$, and
\[
S_i^* S_j = S_j S_i^* \qquad (i \neq j).
\]
The multiplicity of a doubly commuting shift $\bm{S} = (S_1, \ldots, S_d)$ is defined by
\[
\text{mult} \bm{S} = \text{dim }\Big[\bigcap_{i=1}^d \ker S_i^*\Big].
\]
Of course, $(M_{z_1}, \ldots, M_{z_d})$ on $H^2(\D^d)$ is a doubly commuting shift of multiplicity one (cf. \cite{JS-14}). We now present yet another context in which doubly commuting shift appears naturally: Let $C \in \clc(H^2(\D^d))$, and suppose
\[
S_i = C M_{z_i} C \qquad (i=1, \ldots, d).
\]
Lemma \ref{lemma: conju_shift-new} implies that $S_i$ is a shift, $i=1, \ldots, d$. Moreover, by \eqref{eqn: CXC star n}, for each $i \neq j$, we have
\[
\begin{split}
S_{i} S^*_{j} & = (CM_{z_i}C)(CM^*_{z_j}C)
\\
& = CM_{z_i}M^*_{z_j}C
\\
& =  CM^*_{z_j}M_{z_i}C
\\
& = CM^*_{z_j} C CM_{z_i}C
\\
& = S^*_{j} S_{i}.
\end{split}
\]
Finally, since (see \eqref{eqn: CXC star n} again)
\[
\ker (C M_{z_i} C)^* = \ker (C M_{z_i}^* C) = C (\ker M_{z_i}^*),
\]
for all $i=1, \ldots, d$, it follows that
\[
\begin{split}
\bigcap_{i=1}^d \ker S_i^* & = \bigcap_{i=1}^d (C (\ker M_{z_i}^*))
\\
& = C \Big(\bigcap_{i=1}^d (\ker M_{z_i}^*)\Big).
\end{split}
\]
As we mentioned earlier, the multiplicity of $(M_{z_1}, \ldots, M_{z_d})$ on $H^2(\D^d)$ is one. Consequently
\[
\text{mult}(S_1, \ldots, S_d) = 1,
\]
and thus we have proved:

\begin{lemma}\label{lemma: Conj shift d var}
Let $C\in \clb_{a}(H^2(\D^d))$, and suppose $S_i = C M_{z_i} C$ for all $i=1, \ldots, d$. Then $(S_1, \ldots, S_d)$ on $H^2(\D^d)$ is a $d$-tuple of doubly commuting shift of multiplicity one.
\end{lemma}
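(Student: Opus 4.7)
The plan is to read off the three required properties (each $S_i$ is a shift, the tuple is doubly commuting, the joint multiplicity equals one) directly from the infrastructure already in place. Since $C$ is a conjugation (the statement presumably means $C \in \clc(H^2(\D^d))$, as the proof must invoke \eqref{eqn: CXC star n}), Lemma \ref{lemma: conju_shift-new} applied coordinate by coordinate immediately gives that each $S_i = CM_{z_i}C$ is a shift on $H^2(\D^d)$, since $M_{z_i}$ is a shift for every $i=1,\ldots,d$.

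Next I would verify the doubly commuting condition. Using \eqref{eqn: CXC star n} we have $S_j^* = CM_{z_j}^*C$, so for $i \neq j$ the chain
\[
S_i S_j^* = (CM_{z_i}C)(CM_{z_j}^*C) = CM_{z_i}M_{z_j}^*C = CM_{z_j}^*M_{z_i}C = (CM_{z_j}^*C)(CM_{z_i}C) = S_j^* S_i
\]
works because $C^2 = I$ and because $(M_{z_1},\ldots,M_{z_d})$ is itself doubly commuting on $H^2(\D^d)$. This step is routine but is the only place where the doubly commuting property of the coordinate shifts on the polydisc is used.

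For the multiplicity, the key observation is that $\ker S_i^* = \ker(CM_{z_i}^*C) = C(\ker M_{z_i}^*)$: indeed, $f \in \ker S_i^*$ iff $M_{z_i}^*(Cf)=0$ iff $Cf \in \ker M_{z_i}^*$ iff $f \in C(\ker M_{z_i}^*)$ (using $C^2=I$). Intersecting over $i$,
\[
\bigcap_{i=1}^d \ker S_i^* \;=\; \bigcap_{i=1}^d C(\ker M_{z_i}^*) \;=\; C\Bigl(\bigcap_{i=1}^d \ker M_{z_i}^*\Bigr),
\]
where the second equality uses that $C$ is an involutive bijection. Since $C$ is an anti-linear isometric bijection it preserves (real) dimension, so the joint kernel of $(S_1^*,\ldots,S_d^*)$ has the same dimension as the joint kernel of $(M_{z_1}^*,\ldots,M_{z_d}^*)$, which is one-dimensional (spanned by the constant function $1$).

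I do not expect a real obstacle here; the statement packages the three-line calculation displayed in the paragraph preceding the lemma into a clean conclusion. The only point requiring a little care is that $C$ is anti-linear, so when pulling $C$ through an intersection one must argue set-theoretically (as above) rather than treating $C$ as a linear map; once that is handled, everything follows from \eqref{eqn: CXC star n} and Lemma \ref{lemma: conju_shift-new}.
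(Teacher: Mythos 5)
Your proof is correct and follows essentially the same route as the paper: the paper establishes this lemma in the paragraph preceding its statement by citing Lemma \ref{lemma: conju_shift-new} for each coordinate, running exactly your chain of equalities via \eqref{eqn: CXC star n} for the doubly commuting property, and computing $\bigcap_i \ker S_i^* = C\big(\bigcap_i \ker M_{z_i}^*\big)$ for the multiplicity. Your reading of the hypothesis is also right -- the paper's surrounding text takes $C \in \clc(H^2(\D^d))$, so the ``$\clb_a$'' in the statement is a slip, and your dimension argument (an anti-linear isometric bijection carries an orthonormal basis of the joint kernel to one of its image) matches the paper's conclusion that the multiplicity is one.
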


This sets the stage for the notion of $\bm{S}$-Toeplitz operators in several variables.

\begin{definition}
Let $\bm{S} = (S_1, \ldots, S_d)$ be a $d$-tuple of doubly commuting shift on $\clh$. An operator $T \in  \clb(\clh)$ is said
to be $\bm{S}$-Toeplitz if
\[
S_i^* T S_i = T \qquad (i = 1, \ldots, d).
\]
\end{definition}

Therefore, $T \in \clb(\clh)$ is $\bm{S}$-Toeplitz if and only if $T$ is $S_i$-Toeplitz for all $i=1, \ldots, d$. In view of Lemma \ref{lemma: Conj shift d var} above, the proof of the following proposition is now essentially the same as that of Proposition \ref{thm: symm S Toep}.

\begin{proposition}
Let $\varphi\in L^\infty(\T^{d})$, and let $C \in \clc(H^2(\D^{d}))$. If $T_\vp$ is $C$-symmetric, then $T_{\vp}$ is $\bm{S}$-Toeplitz, where $\bm{S} = (CM_{z_1} C, \ldots, C M_{z_d} C)$.
\end{proposition}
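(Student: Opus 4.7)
The plan is to mimic the argument of Proposition \ref{thm: symm S Toep} coordinate-wise. Fix $i \in \{1, \ldots, d\}$; by Lemma \ref{lemma: Conj shift d var}, $S_i = C M_{z_i} C$ is a shift on $H^2(\D^d)$, so to establish that $T_\vp$ is $\bm{S}$-Toeplitz it suffices to verify
\[
S_i^* T_\vp S_i = T_\vp \qquad (i=1,\ldots,d).
\]

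The ingredients I would use are precisely the three facts already recorded earlier: (a) the $C$-symmetry hypothesis $C T_\vp C = T_\vp^*$, equivalently $C T_\vp = T_\vp^* C$; (b) the several-variable Toeplitz condition $M_{z_i}^* T_\vp M_{z_i} = T_\vp$ from \cite[Theorem 3.1]{maji_sarkar_srijan} and, taking adjoints, $M_{z_i}^* T_\vp^* M_{z_i} = T_\vp^*$; and (c) the adjoint identity \eqref{eqn: CXC star n} (with $n=1$), giving $(C X C)^* = C X^* C$ for any $X \in \clb(H^2(\D^d))$.

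The key computation is then a straight chain: starting from $S_i^* T_\vp S_i = (C M_{z_i}^* C)\, T_\vp \,(C M_{z_i} C)$, I would push one $C$ through $T_\vp$ using (a) to get $C M_{z_i}^* T_\vp^* M_{z_i} C$ (after collapsing $C^2=I$), then apply (b) inside to reduce this to $C T_\vp^* C$, and finally invoke (c) with $X = T_\vp^*$ together with (a) once more to identify $C T_\vp^* C = (C T_\vp C)^* = (T_\vp^*)^* = T_\vp$. This yields $S_i^* T_\vp S_i = T_\vp$ for each $i$, which is the required $\bm{S}$-Toeplitz condition.

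There is no real obstacle: the single-variable proof of Proposition \ref{thm: symm S Toep} goes through in each coordinate separately, and the doubly commuting structure of $\bm{S}$ is only needed to legitimize calling $\bm{S}$ a shift tuple (which is already supplied by Lemma \ref{lemma: Conj shift d var}). The only point requiring mild care is the bookkeeping of the $C$'s when moving them past $T_\vp$ and $M_{z_i}$, which is handled uniformly by (a) and (c).
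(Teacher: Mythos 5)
Your proposal is correct and is exactly the argument the paper intends: the paper itself only remarks that the proof is "essentially the same as that of Proposition \ref{thm: symm S Toep}", and your coordinate-wise chain $S_i^* T_\vp S_i = C M_{z_i}^* C\, T_\vp\, C M_{z_i} C = C M_{z_i}^* T_\vp^* M_{z_i} C = C T_\vp^* C = T_\vp$ is that one-variable computation applied verbatim in each variable, with $(CXC)^* = CX^*C$ justifying $S_i^* = CM_{z_i}^*C$. No gaps.
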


Let $C \in \clc(H^2(\D^d))$. Suppose
\[
C = U J_{H^2(\D^d)},
\]
the canonical factorization of $C$ for a unique unitary $U \in \clb(H^2(\D^d))$, and let $S_i:= C M_{z_i} C$, $i=1, \ldots, d$. For each $\bk \in \Z^d_+$, we define
\[
\tilde{z}_{\bk} := C z^{\bk} = U z^{\bk}.
\]
Clearly, $\{\tilde z^{\bk}\}_{\bk \in \Z^d_+} \in B_{H^2(\D^d)}$. Also
\[
\begin{split}
S_{i} \tilde{z}^{\bk} & = CM_{z_{i}}C (C z^{\bk})
\\
& = C (z^{\bk + \epsilon_i})
\\
& = \tilde z^{\bk + \epsilon_i},
\end{split}
\]
where $\epsilon_i \in \Z_+^d$ is the multi-index with zero everywhere but $1$ in the $i$-th slot, and $i = 1, \ldots, d$. In other words, $(S_1, \ldots, S_d)$ forms a $d$-unilateral shift on $H^2(\D^d)$ corresponding to $\{\tilde z^{\bk}\}_{\bk \in \Z^d_+} \in B_{H^2(\D^d)}$. Next, assume that
\[
\vp = \sum_{\bk \in \Z^d} \vp_{\bk} z^{\bk} \in L^\infty(\T^d).
\]
As in the proof of \eqref{eqn: Toepl coeff}, it follows that
\[
\vp_{\bk - \bl} = \inp{T_{\vp} \z^{\bl}}{\z^{\bk}} \qquad (\bk, \bl \in \Z_+^d),
\]
and in view of $\{\tilde \z^{\bk}\}_{\bk \in \Z_+^d} \in B_{H^2(\D^d)}$, we define
\[
\tilde{\vp}_{\bk - \bl} := \langle T_\vp \tilde z^{\bl}, \tilde z^{\bk} \rangle  \qquad (\bk, \bl \in \Z_+^d).
\]
We are now ready for characterizations of symmetric Toeplitz operators on $H^2(\D^d)$. Along with the tools described above, the proof is similar to that of Corollary \ref{cor: phi and u}.

\begin{theorem}\label{thm: S toep n var}
Let $\vp = \sum_{\bk \in \Z_+^d} \vp_{\bk} z^{\bk} \in L^{\infty}(\T^d)$, and let $C$ be a conjugation with canonical factorization $C = UJ_{H^2(\D)}$. The following are equivalent:
\begin{enumerate}
\item $T_{\vp}$ is $C$-symmetric.

\item $\vp_{\bk - \bl} = \tilde \vp_{\bl - \bk}$ for all $\bk, \bl \in \Z_+^d$.

\item $\vp_{\bk - \bl} =  \inp{U^*T_{\vp}U z^{\bk}}{\z^{\bl}}$  for all $\bk, \bl \in \Z_+^d$.
\end{enumerate}
\end{theorem}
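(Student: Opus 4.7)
The plan is to mimic, essentially verbatim, the one-variable argument of Corollary \ref{cor: phi and u}, with every single Fourier index replaced by a multi-index $\bk \in \Z_+^d$. The only structural ingredients I need are the canonical factorization $C = UJ_{H^2(\D^d)} = J_{H^2(\D^d)} U^*$ supplied by Proposition \ref{Prop: C = UC =CU*} applied to $H^2(\D^d)$, the fact that $J_{H^2(\D^d)}$ fixes each monomial $z^{\bk}$ in the canonical basis, and the conjugation identity $\langle Cf,g\rangle = \langle Cg,f\rangle$ from \eqref{eqn: conj inner prod}. The $\bm{S}$-Toeplitz machinery developed in this section plays no role in the proof itself; it only motivates the statement.

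First I would prove $(1)\Leftrightarrow(3)$. Rewriting $CT_\vp C = T_\vp^*$ with the canonical factorization on both sides and cancelling unitaries yields the linear identity
\[
U^* T_\vp U \;=\; J_{H^2(\D^d)}\, T_\vp^*\, J_{H^2(\D^d)},
\]
which is the several-variable analogue of \eqref{eqn: Ustar T U}. Pairing the right-hand side against $z^{\bk}$ and $z^{\bl}$ and using $J_{H^2(\D^d)} z^{\bk} = z^{\bk}$ together with \eqref{eqn: conj inner prod}, a short computation gives
\[
\langle J_{H^2(\D^d)} T_\vp^* J_{H^2(\D^d)} z^{\bk}, z^{\bl}\rangle
= \langle J_{H^2(\D^d)} z^{\bl}, T_\vp^* z^{\bk}\rangle
= \langle T_\vp z^{\bl}, z^{\bk}\rangle
= \vp_{\bk - \bl},
\]
so $C$-symmetry of $T_\vp$ is equivalent to $\vp_{\bk - \bl} = \langle U^* T_\vp U z^{\bk}, z^{\bl}\rangle$ for every $\bk,\bl \in \Z_+^d$, which is precisely condition (3).

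The equivalence $(2)\Leftrightarrow(3)$ is then a matter of unpacking definitions. Since $\tilde z^{\bk} = Uz^{\bk}$ and $\tilde \vp_{\bm-\bn} := \langle T_\vp \tilde z^{\bn}, \tilde z^{\bm}\rangle$, we obtain
\[
\tilde \vp_{\bl - \bk}
= \langle T_\vp \tilde z^{\bk}, \tilde z^{\bl}\rangle
= \langle T_\vp U z^{\bk}, U z^{\bl}\rangle
= \langle U^* T_\vp U z^{\bk}, z^{\bl}\rangle,
\]
so condition (2) coincides with condition (3) term by term.

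The only potential obstacle is purely notational bookkeeping with multi-indices; none of the genuine analytic content changes from the single-variable case, because all that was used there was the fixed-basis property of the canonical conjugation and the basic Toeplitz coefficient formula $\vp_{\bk - \bl} = \langle T_\vp z^{\bl}, z^{\bk}\rangle$, both of which persist verbatim on $H^2(\D^d)$.
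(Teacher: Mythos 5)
Your argument is correct and uses the same ingredients as the paper's proof: the canonical factorization $C=UJ_{H^2(\D^d)}$ reducing $C$-symmetry to $U^*T_\vp U=J_{H^2(\D^d)}T_\vp^*J_{H^2(\D^d)}$, the identity $\langle Cx,y\rangle=\langle Cy,x\rangle$, and the coefficient formula $\vp_{\bk-\bl}=\langle T_\vp z^{\bl},z^{\bk}\rangle$. The only (immaterial) difference is the order of the equivalences — the paper establishes $(1)\Leftrightarrow(2)$ directly and then $(1)\Leftrightarrow(3)$, whereas you do $(1)\Leftrightarrow(3)$ first and observe that $(2)$ and $(3)$ coincide term by term.
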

\begin{proof}
By definition, $T_{\vp}\in \clb(H^2(\D^d))$ is $C$-symmetric if and only if $CT_{\vp}C = T^*_{\vp}$. Therefore, $T_\vp$ is $C$-symmetric if and only if for all $\bk, \bl \in \Z_+^d$, we have
\[
\inp{CT_{\vp}C \z^{\bk} }{\z^{\bl}} = \inp{T^*_{\vp}\z^{\bk}}{\z^{\bl}},
\]
equivalently
\[
\inp{C z^{\bl}}{T_{\vp}C z^{\bk}} = \inp{z^{\bk}}{T_{\vp}z^{\bl}},
\]
as $C$ is a symmetry. This equality is further equivalent to the condition that
\[
\inp{T_{\vp}C z^{\bk}}{Cz^{\bl}} = \inp{T_{\vp}z^{\bl}}{z^{\bk}},
\]
that is
\[
\inp{T_{\vp} \tilde z^{\bk}}{ \tilde z^{\bl}} = \inp{T_{\vp} z^{\bl}}{z^{\bk}},
\]
and consequently, $T_\vp$ is $C$-symmetric if and only if $\vp_{\bk - \bl} = \tilde \vp_{\bl - \bk}$ for all $\bk, \bl \in \Z_+^d$. Finally, as in \eqref{eqn: Ustar T U}, $T_\vp$ is $C$-symmetric if and only if
\[
U^* T_\vp U = J_{H^2(\D^d)} T_\vp^* J_{H^2(\D^d)}.
\]
which, as in the proof of Corollary \ref{cor: phi and u}, is further equivalent to the condition that $\vp_{\bk - \bl} =  \inp{U^*T_{\vp}U z^{\bk}}{\z^{\bl}}$  for all $\bk, \bl \in \Z_+^d$. This completes the proof of the theorem.
\end{proof}

As an application of the above result, in Section \ref{sec: examples}, we will study a class of conjugations along the lines of Example \ref{illustration_example}. We refer the reader to \cite{Wang} for complex symmetric operators on the open unit ball in $\C^n$.

\section{Composition operators}\label{sec: composition}

In this example-based section, we apply our results to a nontrivial class of symmetric operators. Essentially, we connect complex symmetric Toeplitz operators and a special class of composition operators \cite{Shapiro book}.

Let $\vp = \sum_{n=-\infty}^{\infty} \vp_n z^n \in L^{\infty}(\T)$. Let $\theta$ be a holomorphic self-map of $\D$, and let $\psi \in H^{\infty}(\D)$. The \textit{weighted composition operator} $W_{\psi,\theta}$ with weight $\psi$ is defined by (cf. \cite{Jamison}) $W_{\psi,\theta}f = \psi \cdot (f \circ \theta)$, $f \in H^2(\D)$. That is
\[
W_{\psi,\theta}f(z)=\psi(z){f({\theta(z)})} \qquad (f \in H^2(\D), \; z\in \mathbb{D}).
\]
By \cite[Theorem 2.11]{Lim-Khoi}, $W_{\psi,\theta}$ is $J_{H^2(\D)}$-symmetric if and only if either of the following hold:
\begin{enumerate}
\item There exist $\alpha,\lambda \in \T$ such that $\psi(z) = \lambda$ and $\theta(z) = \alpha z$ for all $z\in \mathbb{D}$.

\item \label{conjugationType2} There exist $\alpha\in \mathbb{D}\setminus\{0\}$ and $\lambda \in \T$ such that $\psi(z)=\lambda \frac{\sqrt{1-|\alpha|^2}}{1-z\overline{\alpha}}$ and $\theta(z)=\frac{\bar{\alpha}}{\alpha}\frac{\alpha-z}{1-\bar{\alpha}z}$ for all $z\in \mathbb{D}$.
\end{enumerate}

We consider the nontrivial case, that is, we fix $\alpha\in \D \setminus \{0\}$ and define (by assuming that $\lambda = 1$)
\[
\psi(z) = \frac{\sqrt{1-|\alpha|^2}}{1-z\bar{\alpha}}, \text{ and }   \theta(z)=\frac{\bar{\alpha}}{\alpha}\frac{\alpha-z}{1-\bar{\alpha}z} \qquad (z \in \D).
\]
Define the (Szeg\"{o}) kernel function $k_\alpha \in H^2(\D)$ by
\[
k_{\alpha}(z)= \frac{1}{1-\bar{\alpha}z} \qquad (z \in \D).
\]
Since $\|k_{\alpha}\| = \frac{1}{\sqrt{1-|\alpha|^2}}$, we have
\[
\psi(z) = \frac{1}{\|k_{\alpha}\|} k_{\alpha}(z) \qquad (z \in \D).
\]
Since $W_{\psi,\theta}$ is $J_{H^2(\D)}$-symmetric, by Proposition \ref{prop: U is symmetric}, we conclude that
\begin{equation}\label{eqn: C psi theta}
C_{\psi,\theta} := W_{\psi,\theta} J_{H^2(\D)},
\end{equation}
defines a conjugation on $H^2(\D)$. Clearly
\[
C_{\psi,\theta}f(z)=\psi(z)\overline{f(\overline{\theta(z)})} \qquad (z \in \D, f \in H^2(\D)).
\]
Since $\theta(z)=\frac{\bar{\alpha}}{\alpha}\frac{\alpha-z}{1-\bar{\alpha}z}$, $z \in \D$, is a Blaschke factor, it follows that $M_\theta$ is an isometry and
\[
\ker M_\theta^* = \C k_\alpha.
\]
Throughout the sequel, $C_{\psi,\theta}$ will denote the conjugation on $H^2(\D)$ as defined in \eqref{eqn: C psi theta}. We need a lemma.

\begin{lemma}\label{Lemma WT Toeplitz}
If $T \in \clb(H^2(\D))$ is a Toeplitz operator, then $W^*_{\psi,\theta}TW_{\psi,\theta}$ is also a Toeplitz operator.
\end{lemma}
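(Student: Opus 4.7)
The plan is to exploit two key structural facts about $W_{\psi,\theta}$: it is a unitary operator (not merely an isometry), and it intertwines $M_z$ with $M_\theta$. Once these are in place, the result reduces to checking that $M_\theta^* T M_\theta = T$ for every Toeplitz operator $T$, which is easy because $\theta$ is inner.

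First, I would establish that $W_{\psi,\theta}$ is unitary. From $C_{\psi,\theta} = W_{\psi,\theta} J_{H^2(\D)}$ (equation \eqref{eqn: C psi theta}), Proposition \ref{prop: U is symmetric} immediately gives that $W_{\psi,\theta}$ is unitary on $H^2(\D)$, since $C_{\psi,\theta}$ is a conjugation and $J_{H^2(\D)}$ is the canonical conjugation. Alternatively, one can observe directly that the composition of the two anti-linear isometric involutions $C_{\psi,\theta}$ and $J_{H^2(\D)}$ is a linear isometric bijection.

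Next, I would derive the intertwining relation. For each $f \in H^2(\D)$ and $w \in \D$,
\[
(W_{\psi,\theta} M_z f)(w) = \psi(w)\, \theta(w)\, f(\theta(w)) = \theta(w)(W_{\psi,\theta}f)(w) = (M_\theta W_{\psi,\theta} f)(w),
\]
so $W_{\psi,\theta} M_z = M_\theta W_{\psi,\theta}$. Since $W_{\psi,\theta}$ is unitary, this yields $M_z = W_{\psi,\theta}^* M_\theta W_{\psi,\theta}$, and therefore
\[
M_z^*\bigl(W^*_{\psi,\theta} T W_{\psi,\theta}\bigr) M_z = W^*_{\psi,\theta} M_\theta^* T M_\theta W_{\psi,\theta}.
\]
Thus $W^*_{\psi,\theta} T W_{\psi,\theta}$ satisfies the Toeplitz condition $M_z^* X M_z = X$ if and only if $M_\theta^* T M_\theta = T$.

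Finally, I would verify $M_\theta^* T M_\theta = T$ using that $\theta$ is a Blaschke factor, hence inner, so $|\theta|^2 = 1$ almost everywhere on $\T$. Writing $T = T_\vp$ with $\vp \in L^\infty(\T)$ and noting that $M_\theta = T_\theta$ since $\theta \in H^\infty(\D)$, we compute
\[
M_\theta^* T_\vp M_\theta f = P_{H^2(\D)}\bigl(\bar{\theta}\,\vp\,\theta f\bigr) = P_{H^2(\D)}(\vp f) = T_\vp f,
\]
for all $f \in H^2(\D)$. Combining this with the previous paragraph gives $M_z^*(W^*_{\psi,\theta} T W_{\psi,\theta}) M_z = W^*_{\psi,\theta} T W_{\psi,\theta}$, so the Brown--Halmos characterization identifies $W^*_{\psi,\theta} T W_{\psi,\theta}$ as a Toeplitz operator. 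The main (mild) obstacle is the unitarity of $W_{\psi,\theta}$; once that is noted, the rest is a short computation resting on the inner-ness of $\theta$.
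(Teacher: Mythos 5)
Your proof is correct and follows essentially the same route as the paper: establish the intertwining relation $W_{\psi,\theta}M_z = M_\theta W_{\psi,\theta}$, reduce the Brown--Halmos condition for $W^*_{\psi,\theta}TW_{\psi,\theta}$ to the identity $M_\theta^* T M_\theta = T$, and settle the latter using that $\theta$ is inner. The only (harmless) difference is that you route through the unitarity of $W_{\psi,\theta}$ to write $M_z = W^*_{\psi,\theta}M_\theta W_{\psi,\theta}$; the paper simply takes adjoints in the intertwining relation to get $M_z^* W^*_{\psi,\theta} = W^*_{\psi,\theta} M_\theta^*$, so that $M_z^*(W^*_{\psi,\theta}TW_{\psi,\theta})M_z = W^*_{\psi,\theta}M_\theta^* T M_\theta W_{\psi,\theta}$ holds without invoking surjectivity of $W_{\psi,\theta}$ at all.
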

\begin{proof}
Let us first verify that
\[
W_{\psi,\theta}M_{z} = M_{\theta} W_{\psi,\theta}.
\]
For each $f \in H^2(\D)$, we have
\[
\begin{split}
W_{\psi,\theta} M_{z}f &= W_{\psi,\theta} (zf)
\\
& = \psi(z) \theta(z) f({\theta(z)})
\\
& = \theta(z) \psi(z) {f({\theta(z)})}
\\
& = M_{\theta}W_{\psi,\theta} f,
\end{split}
\]
which completes the proof of the claim. Now let $\vp \in L^\infty(\T)$. Then
\[
M^*_{z} (W^*_{\psi,\theta}T_{\vp}W_{\psi,\theta}) M_z = W^*_{\psi,\theta} M^*_{\theta}T_{\vp} M_{\theta} W_{\psi,\theta}.
\]
Since $\theta$ is an inner function, by the Brown-Halmos criterion \cite{BH}, $T_{\vp}$ is $M_{\theta}$-Toeplitz, that is, $M^*_{\theta} T_{\vp} M_{\theta} = T_{\vp}$. The above equality then implies
\[
M^*_{z} (W^*_{\psi,\theta}T_{\vp}W_{\psi,\theta}) M_z = W^*_{\psi,\theta}T_{\vp}W_{\psi,\theta},
\]
and completes the proof of the lemma.
\end{proof}

We apply this to symmetric Toeplitz operators:

\begin{proposition}\label{Top-W-Symmetry}
Let $\varphi = \sum_{n=-\infty}^{\infty} \vp_n z^n \in L^\infty(\T)$. Then $T_\varphi$ is $C_{\psi,\theta}$-symmetric if and only if
\[
{\varphi}_{n}=\inp{ T_\varphi W_{\psi,\theta}z^n}{W_{\psi,\theta}1} \mbox{ and } {\varphi}_{-n}=\inp{ T_\varphi W_{\psi,\theta}1}{W_{\psi,\theta}z^{n}},
\]
for all $n \geq 1$.
\end{proposition}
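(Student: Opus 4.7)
The plan is to reduce the $C_{\psi,\theta}$-symmetry of $T_\vp$ to a matrix identity via Corollary \ref{cor: phi and u}, and then read off individual Fourier coefficients using Lemma \ref{Lemma WT Toeplitz}. Since $C_{\psi,\theta}=W_{\psi,\theta}J_{H^2(\D)}$ is a conjugation, Proposition \ref{prop: U is symmetric} forces $W_{\psi,\theta}$ to be a unitary on $H^2(\D)$, so this factorization is precisely the canonical factorization of $C_{\psi,\theta}$ with unitary part $U=W_{\psi,\theta}$.

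With this identification, Corollary \ref{cor: phi and u}(3) recasts the $C_{\psi,\theta}$-symmetry of $T_\vp$ as
\[
\vp_{m-n}=\langle W^*_{\psi,\theta} T_\vp W_{\psi,\theta}z^m, z^n\rangle \qquad (m,n\in\Z_+).
\]
Now Lemma \ref{Lemma WT Toeplitz} tells us that $W^*_{\psi,\theta} T_\vp W_{\psi,\theta}$ is itself a Toeplitz operator; call its (uniquely determined) symbol $\tilde{\vp}=\sum_n\tilde{\vp}_n z^n\in L^\infty(\T)$. By \eqref{eqn: Toepl coeff}, the right-hand side above equals $\tilde{\vp}_{n-m}$, so the symmetry condition is equivalent to $\vp_k=\tilde{\vp}_{-k}$ for every $k\in \Z$ (since $k=m-n$ ranges over all of $\Z$ as $m,n\in\Z_+$).

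The last step is to unpack $\tilde{\vp}_{\pm n}$ as explicit inner products involving $W_{\psi,\theta}$. For $n\geq 1$, I would compute
\[
\tilde{\vp}_{-n}=\langle T_{\tilde{\vp}} z^n, 1\rangle=\langle W^*_{\psi,\theta} T_\vp W_{\psi,\theta}z^n, 1\rangle=\langle T_\vp W_{\psi,\theta}z^n, W_{\psi,\theta}1\rangle,
\]
and analogously $\tilde{\vp}_n=\langle T_\vp W_{\psi,\theta}1, W_{\psi,\theta}z^n\rangle$, which immediately yield the two displayed identities in the statement. The main subtlety I would flag is the diagonal case $k=0$: this gives an additional equality $\vp_0=\langle T_\vp W_{\psi,\theta}1, W_{\psi,\theta}1\rangle$, which is most naturally accommodated by extending the index range of the first displayed identity down to $n=0$; once included, the chain of equivalences is complete in both directions.
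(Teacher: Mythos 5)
Your proposal follows essentially the same route as the paper's proof: you identify $C_{\psi,\theta}=W_{\psi,\theta}J_{H^2(\D)}$ as the canonical factorization with unitary part $U=W_{\psi,\theta}$, invoke Corollary \ref{cor: phi and u} to recast $C_{\psi,\theta}$-symmetry as $\vp_{m-n}=\inp{T_\vp W_{\psi,\theta}z^m}{W_{\psi,\theta}z^n}$ for all $m,n\in\Z_+$, and use Lemma \ref{Lemma WT Toeplitz} to see that these inner products depend only on $m-n$, so that all off-diagonal cases collapse to the two displayed families. Naming the symbol $\tilde{\vp}$ of $W^*_{\psi,\theta}T_\vp W_{\psi,\theta}$ is slightly tidier bookkeeping than the paper's direct manipulation of the matrix entries, but it is the same argument.

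The subtlety you flag at $k=0$ is genuine, and the paper's own proof has the identical gap: its converse establishes $\vp_{i-j}=\inp{T_\vp W_{\psi,\theta}z^i}{W_{\psi,\theta}z^j}$ only for $i\neq j$, yet concludes symmetry, which also requires the diagonal identity $\vp_0=\inp{T_\vp W_{\psi,\theta}1}{W_{\psi,\theta}1}$. However, your resolution --- enlarging the index range to $n\geq 0$ --- proves a statement with a strengthened hypothesis rather than the proposition as written, so as it stands your converse is incomplete. The diagonal case is in fact automatic, but this needs an argument; here is one that closes the gap. Set $E:=C_{\psi,\theta}T_\vp C_{\psi,\theta}-T_\vp^*$. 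Both terms are Toeplitz operators: the first because $C_{\psi,\theta}T_\vp C_{\psi,\theta}=J_{H^2(\D)}\bigl(W_{\psi,\theta}^*T_\vp W_{\psi,\theta}\bigr)J_{H^2(\D)}$, Lemma \ref{Lemma WT Toeplitz} applies, and conjugation by $J_{H^2(\D)}$ takes $T_\mu$ to a Toeplitz operator with entries $\overline{\mu_{j-i}}$. Your off-diagonal identities say precisely that all nonzero-index Fourier coefficients of the symbol of $E$ vanish, i.e.\ $E=cI$ for some $c\in\C$. But \eqref{eqn: CXC star n} yields the general identity $C_{\psi,\theta}EC_{\psi,\theta}=T_\vp-C_{\psi,\theta}T_\vp^*C_{\psi,\theta}=-E^*$, which for $E=cI$ reads $\bar{c}I=-\bar{c}I$, forcing $c=0$ and hence $C_{\psi,\theta}T_\vp C_{\psi,\theta}=T_\vp^*$. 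Adding this (or an equivalent treatment of the diagonal) completes the proof of the statement as given.
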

\begin{proof}
Note that, by definition, $C_{\psi,\theta} = W_{\psi,\theta} J_{H^2(\D)}$ the canonical factorization of $C_{\psi,\theta}$. By Corollary \ref{cor: phi and u}, $T_\varphi$ is $C_{\psi,\theta}$-symmetric if and only if
\begin{equation}\label{eqn: phi theta psi}
{\varphi}_{n-m}=\inp{ T_\varphi W_{\psi,\theta} z^n}{W_{\psi,\theta} z^m} \qquad (m, n \in \Z_+).
\end{equation}
In particular, if $T_\varphi$ is $C_{\psi,\theta}$-symmetric, then the conditions hold. For the converse, we first note, in view of Lemma \ref{Lemma WT Toeplitz}, that $W^*_{\psi,\theta}TW_{\psi,\theta}$ is a Toeplitz operator. Suppose $n \geq 1$. For all $i, j \in \Z_+$ such that $n = i-j$, we have
\[
\inp{ T_\varphi W_{\psi,\theta}z^i}{W_{\psi,\theta}z^j} = \inp{ T_\varphi W_{\psi,\theta}z^n}{W_{\psi,\theta}1},
\]
and hence, by assumption
\[
{\vp}_{n} = \inp{ T_\varphi W_{\psi,\theta}z^n}{W_{\psi,\theta}z^0}.
\]
Therefore
\[
{\vp}_{i-j} = \inp{T_\varphi W_{\psi,\theta}z^i}{W_{\psi,\theta}z^j} \qquad  (i\geq j \geq 1).
\]
Similarly, by using the second condition ${\varphi}_{-n}=\inp{ T_\varphi W_{\psi,\theta}1}{W_{\psi,\theta}z^{n}}$, we find
\[
{\vp}_{i-j} =   \inp{ T_\varphi W_{\psi,\theta}z^i}{W_{\psi,\theta}z^j} \qquad  (j\geq i \geq 1).
\]
The above two equalities yield
\[
{\vp}_{i-j} =   \inp{ T_\varphi W_{\psi,\theta}z^i}{W_{\psi,\theta}z^j} \qquad  (i, j \geq 1),
\]
which implies $T_{\vp}$ is $C_{\psi,\theta}$-symmetric and completes the proof.
\end{proof}

Recall the third part of Corollary \ref{cor: phi and u}: For $\vp = \sum_{n=-\infty}^{\infty} \vp_n z^n \in L^\infty(\T)$ and a conjugation $C = U J_{H^2(\D)}$ on $H^2(\D)$, that $T_\vp$ is $C$-symmetric if and only if
\[
\vp_{m-n} = \sum_{i,j=0}^{\infty} u_{m,j} \vp_{i-j} \overline{u_{i,n}} \qquad (m,n \in \Z_+),
\]
where
\[
u_{i,j} := \langle U z^i, z^j \rangle = \langle C z^i, z^j \rangle \qquad (i,j \in \Z_+).
\]
In the following, we compare this with $C_{\psi,\theta}$-symmetric Toeplitz operators. Clearly, we need to compute $u_{i,j}$ for all $i,j \in \Z_+$. First, observe that
\[
W_{\psi, \theta} z^n = \theta(z)^{n} \frac{k_{\alpha}}{\|k_{\alpha}\|} \qquad (n \in \Z_+).
\]
By recalling that $\theta(z)=\frac{\bar{\alpha}}{\alpha}\frac{\alpha-z}{1-\bar{\alpha}z}$, $z \in \D$, we compute $\theta(z)^n k_{\alpha}$ as
\[
\begin{split}
\theta(z)^n k_{\alpha} & =\left(\frac{\bar{\alpha}}{\alpha}\right)^n\frac{(\alpha-z)^n}{(1-\bar{\alpha}z)^{n+1}}
\\
& = \bar{\alpha}^n \left(1-\frac{z}{\alpha}\right)^n (1-\bar{\alpha}z)^{-(n+1)}
\\
& = \bar{\alpha}^n \displaystyle\sum_{p=0}^{n}\frac{(-1)^p}{\alpha^p}\left(\begin{array}{l}n \\ p\end{array}\right)z^p \sum_{q=0}^{\infty}\bar{\alpha}^q\left(\begin{array}{c}n+q \\ q\end{array}\right)z^q.
\end{split}
\]
For each $p, q \geq 0$, set
\[
\beta_p=\frac{(-1)^p}{\alpha^p}\left(\begin{array}{l}n \\ p\end{array}\right), \text{ and } \gamma_q=\bar{\alpha}^q\left(\begin{array}{c}n+q \\ q\end{array}\right).
\]
It follows that
\[
\begin{split}
\theta(z)^n k_{\alpha} & = \bar{\alpha}^n \displaystyle\sum_{p=0}^{n}\beta_pz^p \sum_{q=0}^{\infty}\gamma_qz^q
\\
& = \bar{\alpha}^n \displaystyle\sum_{r=0}^{\infty}c_rz^r,
\end{split}
\]
where
\[
c_r=\displaystyle\sum_{m=0}^{\min\{n,r\}}\beta_m\gamma_{r-m} \qquad (r \in \Z_+).
\]
Now we are ready to compute $u_{i,j}$. Let $i, j \in \Z_+$. Then
\[
\begin{split}
u_{i,j} & = \langle W_{\psi,\theta}z^i,z^j\rangle
\\
& = \sqrt{1-|\alpha|^2} \bar{\alpha}^i \displaystyle\sum_{m=0}^{\min\{i,j\}}\beta_m\gamma_{j-m}
\\
& = \bar{\alpha}^i  \sqrt{1-|\alpha|^2} \displaystyle\sum_{m=0}^{\min\{i,j\}}\frac{(-1)^m}{\alpha^m}\left(\begin{array}{l}i \\ m\end{array}\right)\bar{\alpha}^{(j-m)}\left(\begin{array}{c}i+j-m \\ j-m\end{array}\right)
\\
& = \sqrt{1-|\alpha|^2}\displaystyle\sum_{m=0}^{\min\{i,j\}}\frac{(-1)^m}{\alpha^m}\left(\begin{array}{l}i \\ m\end{array}\right)\bar{\alpha}^{(i+j-m)}\left(\begin{array}{c}i+j-m \\ k-m\end{array}\right)
\\
& = \sqrt{1-|\alpha|^2}\displaystyle\sum_{m=0}^{\min\{i,j\}}\frac{(-1)^m}{\alpha^m}\left(\begin{array}{l}i \\ m\end{array}\right)\bar{\alpha}^{(i+j-m)}\left(\begin{array}{c}i+j-m \\ i\end{array}\right).
\end{split}
\]
Therefore
\[
u_{i,j} = \sqrt{1-|\alpha|^2}\displaystyle\sum_{m=0}^{\min\{i,j\}}\frac{(-1)^m}{\alpha^m}\left(\begin{array}{l}i \\ m\end{array}\right)\bar{\alpha}^{(i+j-m)}\left(\begin{array}{c}i+j-m \\ i\end{array}\right),
\]
for all $i, j \in \Z_+$. In particular, we have
\begin{equation}\label{special terms}
\inp{W_{\psi,\theta} 1}{z^j} = \sqrt{1-|\alpha|^2}\bar{\alpha}^j.
\end{equation}
This also follows directly from the fact that $\inp{W_{\psi,\theta} 1}{z^j} = \sqrt{1-|\alpha|^2} \langle k_\alpha,z^j\rangle$. Recall also from \eqref{eqn: u twist} that $u_{i,j} = u_{j,i}$. Therefore, we have proved:

\begin{proposition}\label{Top-W-Symmetry2}
Let $\vp = \sum_{n=-\infty}^{\infty} \vp_n z^n \in L^\infty(\T)$. Then $T_\vp$ is $C_{\psi,\theta}$-symmetric if and only if
\[
\vp_{m-n} = \sum_{i,j=0}^{\infty} u_{m,j} \vp_{i-j} \overline{u_{i,n}} \qquad (m,n \in \Z_+),
\]
where
\[
u_{i,j} = \sqrt{1-|\alpha|^2}\displaystyle\sum_{m=0}^{\min\{i,j\}}\frac{(-1)^m}{\alpha^m}\left(\begin{array}{l}i \\ m\end{array}\right)\bar{\alpha}^{(i+j-m)}\left(\begin{array}{c}i+j-m \\ i\end{array}\right),
\]
for all $i, j \in \Z_+$.
\end{proposition}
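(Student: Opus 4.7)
The proof assembles two ingredients that have already been set up in the excerpt, so the plan is mostly one of careful accounting rather than any new idea. First, I would observe that by \eqref{eqn: C psi theta} together with the fact that $W_{\psi,\theta}$ is $J_{H^2(\D)}$-symmetric (case \eqref{conjugationType2} of \cite[Theorem 2.11]{Lim-Khoi}), Proposition \ref{prop: U is symmetric} identifies
\[
C_{\psi,\theta} = W_{\psi,\theta}\, J_{H^2(\D)}
\]
as the canonical factorization of the conjugation $C_{\psi,\theta}$, with unitary part $U = W_{\psi,\theta}$. Consequently, Corollary \ref{cor: phi and u}(3) applies verbatim and gives that $T_\vp$ is $C_{\psi,\theta}$-symmetric if and only if
\[
\vp_{m-n} = \sum_{i,j=0}^{\infty} u_{m,j}\, \vp_{i-j}\, \overline{u_{i,n}} \qquad (m,n \in \Z_+),
\]
where $u_{i,j} = \langle W_{\psi,\theta} z^i, z^j\rangle$. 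This reduces the problem to producing a closed form for these inner products.

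To compute $u_{i,j}$ I would use the identity $\psi = k_\alpha/\|k_\alpha\|$ noted in the text, which gives
\[
W_{\psi,\theta} z^i = \frac{1}{\|k_\alpha\|}\,\theta(z)^{i}\, k_\alpha(z).
\]
Substituting $\theta(z) = \tfrac{\bar\alpha}{\alpha}\tfrac{\alpha - z}{1-\bar\alpha z}$ and $k_\alpha(z) = (1-\bar\alpha z)^{-1}$, one factors
\[
\theta(z)^{i} k_\alpha(z) = \bar\alpha^{\,i}\Big(1 - \frac{z}{\alpha}\Big)^{i}(1-\bar\alpha z)^{-(i+1)},
\]
and then expands the two factors via the binomial and negative-binomial series. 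The Cauchy product yields the Taylor coefficient of $z^j$ as the finite sum
\[
\sum_{m=0}^{\min\{i,j\}} \frac{(-1)^m}{\alpha^m}\binom{i}{m}\,\bar\alpha^{\,j-m}\binom{i+j-m}{j-m},
\]
and after pulling out the prefactor $\bar\alpha^{\,i}$ and multiplying by $1/\|k_\alpha\| = \sqrt{1-|\alpha|^2}$, one recovers precisely the claimed formula for $u_{i,j}$, using the elementary identity $\binom{i+j-m}{j-m}=\binom{i+j-m}{i}$.

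The main obstacle is purely bookkeeping: correctly aligning the two binomial coefficients, keeping track of the powers of $\alpha$ versus $\bar\alpha$, and confirming that the lower summation index stops at $\min\{i,j\}$ (coming from the truncation of $(1-z/\alpha)^i$ at degree $i$ and the requirement $j-m\ge 0$ in the second factor). These computations have essentially been performed in the paragraphs preceding the proposition, so my write-up would simply record them in order and invoke Corollary \ref{cor: phi and u}. No additional analytic or operator-theoretic input is required, and in particular the symmetry $u_{i,j}=u_{j,i}$ (noted in \eqref{eqn: u twist}) serves as a useful consistency check on the final expression.
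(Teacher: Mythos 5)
Your proposal is correct and follows essentially the same route as the paper: the paper likewise reads the proposition off from Corollary \ref{cor: phi and u}(3) applied to the canonical factorization $C_{\psi,\theta}=W_{\psi,\theta}J_{H^2(\D)}$, and then computes $u_{i,j}=\langle W_{\psi,\theta}z^i,z^j\rangle$ by exactly the binomial/negative-binomial expansion of $\bar\alpha^{\,i}(1-z/\alpha)^i(1-\bar\alpha z)^{-(i+1)}$ that you describe, including the identity $\binom{i+j-m}{j-m}=\binom{i+j-m}{i}$. No gaps.
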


Now we consider a special case: Let
\[
\varphi(z) = {\varphi}_{-1} \bar{z} + {\varphi}_0 + {\varphi}_1z \in L^\infty(\T).
\]
By Proposition \ref{Top-W-Symmetry2}, $T_\varphi$ is $C_{\psi,\theta}$-symmetric if and only if
\begin{equation}\label{eq: sec 8}
{\vp}_{1} = \sum_{i,j} u_{1,j} {\vp}_{i-j}\overline{u_{i,0}}, \text{ and }
{\vp}_{-1} = \sum_{i,j} u_{0,j} {\vp}_{i-j}\overline{u_{i,1}},
\end{equation} 	
and
\[
{\vp}_{0} = \sum_{ij} u_{0,j} {\vp}_{i-j}\overline{u_{i,0}}.
\]
In view of \eqref{special terms}, the latter condition can be further simplified. Indeed, we have
\[
{\varphi}_0 = {\varphi}_{-1}(\sum_{k=0}^{\infty} (1-|\alpha|^2)\bar{\alpha}^{k+1}\alpha^k) + {\varphi}_0 + {\varphi}_1 (\sum_{k=0}^{\infty}| (1-|\alpha|^2)\bar{\alpha}^k\alpha^{k+1}),
\]
and hence
\[
\begin{split}
0 & = {\varphi}_{-1}(\sum_{k=0}^{\infty}| (1-|\alpha|^2)\bar{\alpha}^{k+1}\alpha^k) + {\varphi}_1 (\sum_{k=0}^{\infty} (1-|\alpha|^2)\bar{\alpha}^k\alpha^{k+1})
\\
& = (\sum_{k=0}^{\infty} (1-|\alpha|^2) |\alpha|^{2k}) (\bar{\alpha} \vp_{-1} + \alpha \vp_1)
\\
& = \bar{\alpha} \vp_{-1} + \alpha \vp_1.
\end{split}
\]
Therefore, ${\vp}_{0} = \sum_{ij} u_{0,j} {\vp}_{i-j}\overline{u_{i,0}}$ if and only if
\[
{\varphi}_{1} = - \frac{\bar{\alpha}}{\alpha} {\varphi}_{-1}.
\]
Moreover, the first equality in \eqref{eq: sec 8} implies
\[
\varphi_{1}=\varphi_{-1}\sum_{k=0}^{\infty}u_{1,k+1}\bar{u}_{0,k}+\varphi_0\delta_{1,0}+\varphi_1\sum_{k=0}^{\infty}u_{1,k}\bar{u}_{0,k+1},
\]
that is
\begin{equation}\label{eqn: comp exam 1}
\varphi_{1} = \sqrt{1-|\alpha|^2}\left[ \varphi_{-1}\sum_{k=0}^{\infty}\alpha^k u_{1,k+1}+\varphi_1\sum_{k=0}^{\infty}\alpha^{k+1}u_{1,k}\right],
\end{equation}
whereas the second equality in \eqref{eq: sec 8} yields
\[
\varphi_{-1} =\varphi_{-1}\sum_{k=0}^{\infty}u_{0,k+1}\bar{u}_{1,k}+\varphi_0\delta_{0,1}+\varphi_1\sum_{k=0}^{\infty}u_{0,k}\bar{u}_{1,k+1},
\]
that is
\begin{equation}\label{eqn: comp exam 2}
\varphi_{-1} = \sqrt{1-|\alpha|^2} \left[\varphi_{-1} \sum_{k=0}^{\infty}\bar{\alpha}^{k+1}\bar{u}_{1,k}+\varphi_1\sum_{k=0}^{\infty}\bar{\alpha}^k\bar{u}_{1,k+1} \right].
\end{equation}
This implies that $T_\varphi$ is $C_{\psi,\theta}$-symmetric if and only if ${\varphi}_{1} = - \frac{\bar{\alpha}}{\alpha} {\varphi}_{-1}$ and both \eqref{eqn: comp exam 1} and \eqref{eqn: comp exam 2} hold.

\noindent Now, if possible, suppose that $T_\varphi$ is $C_{\psi,\theta}$-symmetric. Plugging ${\varphi}_{1} = - \frac{\bar{\alpha}}{\alpha} {\varphi}_{-1}$ into \eqref{eqn: comp exam 1} and then cancelling the factor $\vp_1$ out from each side, we get
\[
\begin{split}
1 & = \sqrt{1-|\alpha|^2}\left[ -\frac{\alpha}{\bar{\alpha}} \sum_{k=0}^{\infty}\alpha^k u_{1,k+1} + \sum_{k=0}^{\infty}\alpha^{k+1}u_{1,k}\right]
\\
& = \sqrt{1-|\alpha|^2}\left[ -\frac{1}{\bar{\alpha}}\sum_{k=0}^{\infty}\alpha^{k+1} u_{1,k+1}+\sum_{k=0}^{\infty}\alpha^{k+1}u_{1,k}\right]
\\
&=\sqrt{1-|\alpha|^2}\sum_{k=0}^{\infty}\alpha^{k+1}\left[u_{1,k} -\frac{1}{\bar{\alpha}}u_{1,k+1} \right],
\end{split}
\]
and hence
\begin{equation}\label{eqn: proof example 2}
1 = \sqrt{1-|\alpha|^2}\sum_{k=1}^{\infty}\alpha^{k+1}\left[u_{1,k} -\frac{1}{\bar{\alpha}}u_{1,k+1} \right] +\sqrt{1-|\alpha|^2}\alpha\left[ u_{1,0} -\frac{1}{\bar{\alpha}}u_{1,1}\right].
\end{equation}
Now we compute $u_{1,k} -\frac{1}{\bar{\alpha}}u_{1,k+1}$, $k \geq 0$. Set, for the shake of simplicity, $\beta = \sqrt{1-|\alpha|^2}$. Recall 	
\[
u_{i,j}= \beta \displaystyle\sum_{m=0}^{\min\{i,j\}}\frac{(-1)^m}{\alpha^m}\left(\begin{array}{l}i \\ m\end{array}\right)\bar{\alpha}^{(i+j-m)}\left(\begin{array}{c}i+j-m \\ i\end{array}\right).
\]
Therefore, for each $k \geq 1$, we have
\[
u_{k,1}= \beta \displaystyle\sum_{m=0}^{\min\{k,1\}}\frac{(-1)^m}{\alpha^m}\left(\begin{array}{l}k \\ m\end{array}\right)\bar{\alpha}^{(k+1-m)}\left(\begin{array}{c}k+1-m \\ k\end{array}\right),
\]
which implies
\[
\begin{split}
u_{k,1}-\frac{1}{\bar{\alpha}}u_{1,k+1} & = \beta \left[\displaystyle\sum_{m=0}^{1}\frac{(-1)^m}{\alpha^m}\left(\begin{array}{l}k \\ m\end{array}\right)\bar{\alpha}^{(k+1-m)}\left(\begin{array}{c}k+1-m \\ k\end{array}\right)\right.\\ &\left. \hspace{3cm}-\displaystyle\sum_{m=0}^{1}\frac{(-1)^m}{\alpha^m}\left(\begin{array}{c}k+1 \\ m\end{array}\right)\bar{\alpha}^{(k+1-m)}\left(\begin{array}{c}k+2-m \\ k+1\end{array}\right)\right]
\\
& = \beta \left[\bar{\alpha}^{(k+1)} \left(\begin{array}{c}k+1 \\ k\end{array}\right)-\frac{1}{\alpha}\left(\begin{array}{c}k \\ 1\end{array}\right)\bar{\alpha}^{k}-\bar{\alpha}^{(k+1)}\left(\begin{array}{c}k+2 \\ k+1\end{array}\right)+\frac{1}{\alpha}\left(\begin{array}{c}k+1 \\ 1\end{array}\right)\bar{\alpha}^{k}  \right]
\\
&= \beta \left[\frac{\bar{\alpha}^k}{\alpha}-\bar{\alpha}^{(k+1)}\right]\\
& = (1-|\alpha|^2)^{3/2}\frac{\bar{\alpha}^k}{\alpha}.
\end{split}
\]
On the other hand
\[
 u_{1,0} -\frac{1}{\bar{\alpha}}u_{1,1}=\frac{(1-|\alpha|^2)^{3/2}}{\alpha}.
\]	
Consequently, \eqref{eqn: proof example 2} yields
\[
\begin{split}
1&=\sqrt{1-|\alpha|^2}\sum_{k=1}^{\infty}\alpha^{k+1}\left[(1-|\alpha|^2)^{3/2}\frac{\bar{\alpha}^k}{\alpha} \right] +\sqrt{1-|\alpha|^2}\alpha\left[ \frac{(1-|\alpha|^2)^{3/2}}{\alpha}\right]\\
&=(1-|\alpha|^2)^2\sum_{k=1}^{\infty}|\alpha |^{2k} +\sqrt{1-|\alpha|^2}\alpha\left[ \frac{(1-|\alpha|^2)^{3/2}}{\alpha}\right]\\
&=(1-|\alpha|^2)^2\left(\sum_{k=1}^{\infty}|\alpha |^{2k}+1 \right)\\
&=1-|\alpha|^2,
\end{split}
\]
which contradicts the fact that $\alpha \neq 0$. Thus we have proved:

\begin{proposition}
Let
\[
\varphi(z) = {\varphi}_{-1} \bar{z} + {\varphi}_0 + {\varphi}_1z \in L^\infty(\T).
\]
Then $T_\vp$ is $C_{\psi,\theta}$-symmetric if and only if $\vp$ is a constant function.
\end{proposition}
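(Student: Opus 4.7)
The plan is to apply Proposition \ref{Top-W-Symmetry2} to the Laurent polynomial symbol $\varphi(z) = \varphi_{-1}\bar z + \varphi_0 + \varphi_1 z$. Since $\varphi_k = 0$ for $|k| \geq 2$, the infinite system $\varphi_{m-n} = \sum_{i,j} u_{m,j}\varphi_{i-j}\overline{u_{i,n}}$ effectively reduces to three equations, one each for $m-n \in \{-1,0,1\}$; and by the Toeplitz structure of both sides, those three equations are already captured by the representative choices $(m,n) \in \{(0,0),(1,0),(0,1)\}$.

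First I would tackle the diagonal equation at $(0,0)$. Using $u_{i,0} = \sqrt{1-|\alpha|^2}\,\bar\alpha^i$ (which follows from the reproducing-kernel identity $\langle W_{\psi,\theta}1, z^j\rangle = \sqrt{1-|\alpha|^2}\,\langle k_\alpha, z^j\rangle$ recorded in \eqref{special terms}), the triple sum collapses into two geometric series in $|\alpha|^2$. Summing and using the normalization $\sum_{k\geq 0}(1-|\alpha|^2)|\alpha|^{2k}=1$, I extract the clean linear constraint $\bar\alpha\varphi_{-1} + \alpha\varphi_1 = 0$, equivalently $\varphi_1 = -\tfrac{\bar\alpha}{\alpha}\varphi_{-1}$.

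Next I would substitute this relation into the off-diagonal equation at $(1,0)$. After factoring out $\varphi_1$ and isolating the $k=0$ term, the identity reduces to
\[
1 = \sqrt{1-|\alpha|^2}\sum_{k=0}^{\infty}\alpha^{k+1}\Bigl[u_{1,k}-\tfrac{1}{\bar\alpha}u_{1,k+1}\Bigr].
\]
The main technical task is then to evaluate the combination $u_{1,k} - \bar\alpha^{-1}u_{1,k+1}$ in closed form. Using the explicit binomial expression for $u_{i,j}$ worked out above, a term-by-term cancellation between the two truncated binomial sums should yield the telescoped identity $u_{1,k} - \bar\alpha^{-1}u_{1,k+1} = (1-|\alpha|^2)^{3/2}\bar\alpha^k/\alpha$. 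Substituting and summing the resulting geometric series in $|\alpha|^2$ then degenerates the whole equation to $1 = 1-|\alpha|^2$, which contradicts $\alpha \neq 0$ unless the hidden factor $\varphi_1$ was zero to begin with. Hence $\varphi_1 = 0$, and the earlier linear constraint forces $\varphi_{-1} = 0$ as well.

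The converse implication is immediate: a constant symbol gives $T_\varphi = \varphi_0 I$, which is trivially $C$-symmetric for every conjugation $C$. The main obstacle in the argument is the combinatorial manipulation of $u_{i,j}$: verifying the telescoping identity for $u_{1,k} - \bar\alpha^{-1}u_{1,k+1}$ requires a careful accounting of the binomial terms, but once this identity is secured, the rest of the argument is routine series bookkeeping.
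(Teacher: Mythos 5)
Your proposal is correct and reproduces the paper's own argument essentially step for step: the diagonal equation gives $\bar{\alpha}\varphi_{-1}+\alpha\varphi_{1}=0$, substituting this into the $(1,0)$ equation and cancelling $\varphi_{1}$ (assumed nonzero for contradiction) together with the telescoped identity $u_{1,k}-\bar{\alpha}^{-1}u_{1,k+1}=(1-|\alpha|^2)^{3/2}\bar{\alpha}^k/\alpha$ collapses everything to $1=1-|\alpha|^2$, forcing $\varphi_{1}=\varphi_{-1}=0$. Since this is exactly the route taken in the paper (via Proposition \ref{Top-W-Symmetry2}, Lemma \ref{Lemma WT Toeplitz} for the Toeplitz structure of $W^{*}_{\psi,\theta}T_{\vp}W_{\psi,\theta}$, and the same binomial cancellation), no further comment is needed.
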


\section{Examples}\label{sec: examples}

In this section, we present more examples of symmetric Toeplitz operators and comment on some of our results and methodology. We begin with the easy case: finite Toeplitz matrices. Recall that a Toeplitz matrix of order $N+1$, $N \geq 1$, admits the following representation
\begin{equation}\label{finite_Toeplitz_matrix}
T=
\begin{bmatrix}
a_0 & a_{-1}  & a_{-2}  & \cdots & a_{-N}
\\
a_1    & a_0     & a_{-1}  & \ddots & a_{-N+1}
\\
a_2 & a_1 & a_0  & \ddots & a_{-N+2}
\\
\vdots & \ddots & \ddots & \ddots & \vdots
\\
a_{N}& a_{N-1}  & a_{N-2}  & \cdots   & a_0
\end{bmatrix}.
\end{equation}
Consider the standard orthonormal basis of $\C^{N+1}$ as
\[
\zeta = \{e_n\}_{n \in \Lambda} \in B_{\C^{N+1}},
\]
where
\[
\Lambda = \{0,1,\ldots, N\}.
\]
Suppose $C$ is a conjugation on $\C^{N+1}$ corresponding to $\{f_n\}_{n \in \Lambda} \in B_{\C^{N+1}}$, that is, $Cf_n = f_n$ for all $n \in \Lambda$ (see Definition \ref{def: conj corres}). By Proposition \ref{conju_Charac_theorem}, we have
\begin{equation}\label{finite_dim_conju}
C(\sum_{n=0}^N a_n e_n) = \sum_{n=0}^N \sum_{m=0}^N \bar{a}_n c_{n,m}^{(\zeta)} e_m,
\end{equation}
for all $(a_0, a_1, \ldots, a_N) \in \C^{N+1}$, where
\[
c_{n,m}^{(\zeta)}=\displaystyle\sum_{k=0}^N \langle f_k, e_n\rangle \langle f_k, e_m\rangle,
\]
for all $m, n \in \Lambda$. We are now ready for characterizations of $C$-symmetric Toeplitz matrices.

\begin{theorem}\label{finiteToeplitzCS}
Let $T$ be a Toeplitz matrix, and let $C$ be a conjugation on $\C^{N+1}$ as in \eqref{finite_Toeplitz_matrix} and \eqref{finite_dim_conju}, respectively. Then $T$ is $C$-symmetric if and only if
\[
\sum_{m=0}^{N}c^{(\zeta)}_{m,p} \bar{a}_{m-k} = \sum_{m=0}^{N}c^{(\zeta)}_{m,k}\bar{a}_{m-p},
\]
for all $k, p \in \Lambda$.
\end{theorem}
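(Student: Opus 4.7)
The plan is to mimic the proof of Theorem \ref{thm_main} in the finite-dimensional setting, replacing Hardy-space bookkeeping with the finite basis $\{e_n\}_{n\in\Lambda}$. Since $C$ is involutive, $T$ is $C$-symmetric if and only if $CT = T^*C$, which in turn is equivalent to $CTe_k = T^*Ce_k$ for every $k\in \Lambda$. I would prove the theorem by computing both sides explicitly and equating the coefficient of $e_p$ for each $p \in \Lambda$.

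For the left-hand side, the Toeplitz structure \eqref{finite_Toeplitz_matrix} gives $Te_k = \sum_{i=0}^{N} a_{i-k} e_i$, and then the representation \eqref{finite_dim_conju} of $C$ yields
\[
CTe_k \;=\; \sum_{i=0}^{N}\sum_{p=0}^{N} \bar{a}_{i-k}\, c_{i,p}^{(\zeta)}\, e_p.
\]
For the right-hand side, \eqref{finite_dim_conju} gives $Ce_k = \sum_{j=0}^{N} c_{k,j}^{(\zeta)} e_j$, and since $(T^*)_{ij} = \overline{a_{j-i}}$ I obtain
\[
T^*Ce_k \;=\; \sum_{i=0}^{N}\sum_{j=0}^{N} c_{k,j}^{(\zeta)}\,\overline{a_{j-i}}\, e_i.
\]

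Equating the coefficient of $e_p$ in these two expansions converts the symmetry condition into
\[
\sum_{i=0}^{N}\bar{a}_{i-k}\, c_{i,p}^{(\zeta)} \;=\; \sum_{j=0}^{N} c_{k,j}^{(\zeta)}\,\overline{a_{j-p}} \qquad (k,p\in\Lambda).
\]
A relabelling of dummy indices on the left-hand side from $i$ to $m$, together with the immediate symmetry $c_{n,m}^{(\zeta)} = c_{m,n}^{(\zeta)}$ (evident from the definition, or from \eqref{eqn: conj inner prod}) applied on the right-hand side after renaming $j$ to $m$, then transforms this into the stated identity
\[
\sum_{m=0}^{N} c_{m,p}^{(\zeta)}\,\bar{a}_{m-k} \;=\; \sum_{m=0}^{N} c_{m,k}^{(\zeta)}\,\bar{a}_{m-p}.
\]
Each step in this chain is an equivalence, so no separate converse argument will be needed. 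I do not anticipate any serious obstacle; the only care required is the indexing convention that $a_j$ be treated as zero whenever $j$ falls outside $\{-N,\ldots,N\}$, so that all sums remain consistent with the matrix form in \eqref{finite_Toeplitz_matrix}.
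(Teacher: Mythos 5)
Your proposal is correct and follows exactly the route the paper intends: the paper reduces $C$-symmetry to $CTe_k = T^*Ce_k$ for all $k\in\Lambda$ and then says the computation proceeds "in a manner similar to the proof of Theorem \ref{thm_main}," which is precisely the coefficient-matching calculation you carry out (your use of the symmetry $c_{n,m}^{(\zeta)}=c_{m,n}^{(\zeta)}$ to pass from $c_{k,m}^{(\zeta)}$ to $c_{m,k}^{(\zeta)}$ is the right final touch, and the indexing caveat is moot since $m-k$ always lies in $\{-N,\dots,N\}$). In effect you have supplied the details the paper omits.
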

\begin{proof}
Note that $T$ is $C$-symmetric with $C$ as in \eqref{finite_dim_conju} if and only if
\[
CTe_k=T^*Ce_k,
\]
for all $k \in \Lambda$. The proof now follows in a manner similar to the proof of Theorem \ref{thm_main}.
\end{proof}

We illustrate this with two simple examples of conjugations. Let $f_n = e_n$ for all $n \in \Lambda$. Then $C$ becomes the standard canonical conjugation $J_{\C^{N+1}}$, where
\[
J_{\mathbb{C}^{N+1}}(z_0,z_1,z_2,\dots,z_N) = (\bar{z}_0,\bar{z}_1,\bar{z}_2,\dots,\bar{z}_N),
\]
for all $(z_0,z_1,z_2,\dots,z_N) \in \C^{N+1}$. In this case, we have
\[
c_{m,n}^{(\zeta)} = \begin{cases}
1 & \mbox{if } m=n
\\
0 & \mbox{otherwise},
\end{cases}
\]
which, along with Theorem \ref{finiteToeplitzCS}, implies: the Toeplitz matrix $T$ in \eqref{finite_Toeplitz_matrix} is $J_{\mathbb{C}^{N+1}}$-symmetric if and only if
\[
a_{p-k}=a_{k-p} \qquad (k,p \in \Lambda).
\]
Of course, this follows straight from the definition of symmetric operators. Next, we verify the above corresponding to the Toeplitz conjugation $C_{\text Toep}$ on $\C^{N+1}$. Recall from \eqref{eqn: can con FD} that
\[
C_{\text Toep} (z_0, z_1, \ldots, z_N) = (\overline{z}_{N}, \overline{z}_{N-1}, \ldots, \overline{z}_0),
\]
for all $(z_0,z_1, \ldots, z_N) \in \C^{N+1}$. In this case, we have
\[
\begin{split}
c^{(\zeta)}_{m,n} & =\langle C_{\text Toep} e_m,e_n\rangle
\\
& =\langle e_{N-m},e_n\rangle
\\
& = \begin{cases}
1 & \mbox{if } m+n=N\\
0 & \text{otherwise}.
\end{cases}
\end{split}
\]
This implies
\[
\sum_{m=0}^{N}c^{(\zeta)}_{m,p}\bar{a}_{m-k}=\bar{a}_{N-(p+k)},
\]
as well as
\[
\sum_{m=0}^{N}c^{(\zeta)}_{m,k}\bar{a}_{m-p}=\bar{a}_{N-(p+k)},
\]
for all $k,p \in \Lambda$. Theorem \ref{finiteToeplitzCS} then recovers the well-known fact that:

\begin{corollary}\label{cor: matrix symm}
Toeplitz matrices are $C_{\text Toep}$-symmetric.
\end{corollary}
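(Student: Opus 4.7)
The plan is to apply Theorem \ref{finiteToeplitzCS} directly, using the coefficients $c^{(\zeta)}_{m,n}$ already computed for the Toeplitz conjugation $C_{\text{Toep}}$ in the paragraph immediately preceding the corollary. Since the representation formula in Proposition \ref{conju_Charac_theorem} and its specialization \eqref{finite_dim_conju} express $C_{\text{Toep}}$ in terms of these coefficients, and Theorem \ref{finiteToeplitzCS} reduces $C_{\text{Toep}}$-symmetricity of a Toeplitz matrix $T = (a_{i-j})$ to the identity
\[
\sum_{m=0}^{N}c^{(\zeta)}_{m,p}\bar{a}_{m-k} \;=\; \sum_{m=0}^{N}c^{(\zeta)}_{m,k}\bar{a}_{m-p} \qquad (k, p \in \Lambda),
\]
the entire task is to verify this identity.

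First I would record that for $C_{\text{Toep}}$ the coefficient $c^{(\zeta)}_{m,n}$ is $1$ when $m+n = N$ and $0$ otherwise, as computed in the text; this follows at once from the definition $C_{\text{Toep}} e_m = e_{N-m}$. Substituting this selector into the left-hand side of the identity collapses the sum to the single surviving term $m = N - p$, giving $\bar{a}_{(N-p)-k} = \bar{a}_{N-(p+k)}$. Substituting into the right-hand side collapses the sum to the term $m = N-k$, giving $\bar{a}_{(N-k)-p} = \bar{a}_{N-(p+k)}$. The two sides agree, so the hypothesis of Theorem \ref{finiteToeplitzCS} is satisfied and $T$ is $C_{\text{Toep}}$-symmetric.

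There is essentially no obstacle: the only substantive input is the formula for $c^{(\zeta)}_{m,n}$ in the case of $C_{\text{Toep}}$, and once that selector formula is in hand the two sums reduce to exactly the same entry $\bar{a}_{N-(p+k)}$ of the given Toeplitz data. In other words, the symmetric structure of the Toeplitz conjugation (with respect to the anti-diagonal, i.e.\ the reversal $e_m \mapsto e_{N-m}$) is perfectly matched with the Toeplitz symmetry of $T$ (constancy along diagonals), and Theorem \ref{finiteToeplitzCS} packages this matching as an identity that is immediate from the two substitutions above. Thus the corollary will follow in a couple of lines after the appeal to Theorem \ref{finiteToeplitzCS}.
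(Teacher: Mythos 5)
Your proposal is correct and follows exactly the paper's own route: the paper likewise computes that $c^{(\zeta)}_{m,n}$ equals $1$ precisely when $m+n=N$ and $0$ otherwise, observes that both sums in Theorem \ref{finiteToeplitzCS} then collapse to $\bar{a}_{N-(p+k)}$, and concludes. Nothing is missing.
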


The following example illustrates Proposition \ref{thm: symm S Toep} in view of Example \ref{illustration_example}:

\begin{example}
Recall from Example \ref{illustration_example}, for each $\theta, \xi \in \R$, that $C_{\theta, \xi}$ is a conjugation on $H^2(\D)$ with respect to the basis $\{e^{\frac{i\xi}{2}} e^{\frac{-i n \theta}{2}}z^n\}_{n \in \Z_+} \in B_{H^2(\D)}$, where
\[
(C_{\theta, \xi} f)(z)=e^{i\xi}\overline{f(e^{i\theta}\bar{z})} \qquad (f \in H^2(\D)).
\]
If we set
\[
S_{\theta, \xi} = C_{\theta, \xi} M_z C_{\theta, \xi},
\]
then a simple calculation reveals that
\[
S_{\theta, \xi} = e^{-i\theta}M_z.
\]
Suppose $T_{\vp}$, $\vp \in L^\infty(\T)$, is a $C_{\xi,\theta}$-symmetric Toeplitz operator. Then Proposition \ref{thm: symm S Toep} implies that $T_{\vp}$ is a $S_{\theta, \xi}$-Toeplitz operator.
\end{example}

In the context of the counterexample following Proposition \ref{thm: symm S Toep}, we now exhibit an example of conjugation $C \notin \{M_z\}'$ and a symbol $\vp \in L^\infty(\T)$ such that $T_\vp$ is not $C$-symmetric.

\begin{example}
Consider the conjugation $C$ on $H^2(\D)$ defined by
\[
C(\sum_{n=0}^{\infty} a_n z^n) = \bar{a}_0 + \bar{a}_2 z + \bar{a}_1 z^2 + \sum_{n=3}^{\infty} \bar{a}_n z^n,
\]
for all $\sum_{n=0}^{\infty}a_n z^n \in H^2(\D)$. Note that $C(1) = 1$, $Cz^2=z$, and $Cz^n = z^{n+1}$ for $n=1$ and all $n \geq 3$. Moreover, $\{f_n\}_{n \in \Z_+} \in B_{H^2(\D)}$ and $Cf_n = f_n$ for all $n \in \Z_+$, where
\[
f_n = \begin{cases}
1 & \mbox{if } n=0 \\
\frac{1}{\sqrt{2}} (z^{2n-1} + z^{2n}) & \mbox{otherwise}.
\end{cases}
\]
Clearly
\[
M_zC \neq C M_z.
\]
In this case, for the Toeplitz operator $T_\vp$, with
\[
\vp(z)=i\bar{z}^2+\bar{z}-z-iz^2 \qquad (z \in \T),
\]
it follows that $C T_\vp \neq T_\vp^*C$, that is, $T_\vp$ is not $C$-symmetric.
\end{example}

We conclude this section with Toeplitz operators on $H^2(\D^d)$. We follow the one variable construction described in Example \ref{illustration_example}. Recall that (in view of the identification of $H^2(\mathbb{D}^d)$ as a closed subspace via radial limits)
\[
\{z^{\bk}\}_{\bk \in \Z_+^d} \in B_{H^2(\D^d)} \text{ and } \{z^{\bk}\}_{\bk \in \Z^d} \in B_{L^2(\T^d)}.
\]
Fix $\bm{\theta} = (\theta_1, \ldots, \theta_d)$ and $\bm{\xi} = (\xi_1, \ldots, \xi_d)$ in $\R^d$, and set
\[
f_{\bk} = \exp\Big(\frac{i}{2} \sum_{j=1}^d \xi_j\Big) \exp\Big(\frac{-i}{2} \sum_{j=1}^d k_j \theta_j\Big) z^{\bk} \qquad (\bk \in \Z_+^d).
\]
Since
\[
\langle f_{\bk}, f_{\bl} \rangle = \exp\Big(\frac{i}{2} \sum_{j=1}^d (l_j - k_j) \theta_j \Big) \langle z^{\bk}, z^{\bl} \rangle,
\]
for all $\bk, \bl \in \Z_+^d$ and $\{e_{\bk}\}_{\bk \in \Z_+^d} \in B_{H^2(\D^d)}$, it follows that
\[
\{f_{\bk}\}_{\bk \in \Z_+^d} \in B_{H^2(\D^d)}.
\]
Denote by $C_{\bm{\theta}, \bm{\xi}}$ the conjugation corresponding to $\{f_{\bk}\}_{\bk \in \Z_+^d} \in B_{H^2(\D^d)}$, that is
\[
C_{\bm{\theta}, \bm{\xi}} f_{\bk} = f_{\bk} \qquad (\bk \in \Z_+^d).
\]
We now proceed to compute the representation of $C_{\bm{\theta}, \bm{\xi}}$ with respect to the canonical basis $\zeta = \{z^{\bk}\}_{\bk \in \Z_+^d} \in B_{H^2(\D^d)}$. In view of Corollary \ref{cor: rep of C}, we have
\[
c_{\bl, \bm{m}}^{(\zeta)} = \sum_{\bk \in \Z_+^d}\langle f_{\bk}, z^{\bl} \rangle \langle f_{\bk}, z^{\bm{m}}\rangle
\]
and hence
\[
c_{\bk, \bm{m}}^{(\zeta)}  =
\begin{cases}
0 & \text{if } \bm{m} \neq \bk \\
\exp\Big(i \sum_{j=1}^d \xi_j\Big) \exp\Big(-i \sum_{j=1}^d k_j \theta_j\Big) & \text{if } \bm{m}= \bk.
\end{cases}
\]
Let $f = \sum_{\bk \in \Z_+^d} a_{\bk} z^{\bk} \in H^2(\D^d)$. As in Example \ref{illustration_example}, we compute
\[
\begin{split}
C_{\bm{\theta}, \bm{\xi}}f & = \sum_{\bm{k} \in \Z_+^d} \sum_{\bm{m} \in \Z_+^d} \overline{a_{\bk}} c_{\bk, \bm{m}}^{(\zeta)} z^{\bm{m}}
\\
& = \sum_{\bm{k} \in \Z_+^d} \overline{a_{\bk}} c_{\bk, \bk}^{(\zeta)} z^{\bk}
\\
& = \exp\Big(i \sum_{j=1}^d \xi_j\Big) \sum_{\bm{k} \in \Z_+^d} \overline{a_{\bk}} \exp\Big(-i \sum_{j=1}^d k_j \theta_j\Big) z^{\bk}.
\end{split}
\]
Therefore, we have the following:

\begin{proposition}\label{prop: C on Dd}
For each $\bm{\theta} = (\theta_1, \ldots, \theta_d)$ and $\bm{\xi} = (\xi_1, \ldots, \xi_d)$ in $\R^d$, the map
\[
C_{\bm{\theta}, \bm{\xi}} (\sum_{\bk \in \Z_+^d} a_{\bk} e_{\bk}) = \exp\Big(i \sum_{j=1}^d \xi_j\Big) \sum_{\bm{k} \in \Z_+^d} \overline{a_{\bk}} \exp\Big(-i \sum_{j=1}^d k_j \theta_j\Big) z^{\bk},
\]
defines a conjugation on $H^2(\D^d)$.
\end{proposition}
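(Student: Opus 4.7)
The plan is to deduce Proposition \ref{prop: C on Dd} essentially as a direct application of the machinery already assembled in the preceding discussion, with Lemma \ref{conjuONB} and Corollary \ref{cor: rep of C} doing the heavy lifting. First I would verify that $\{f_{\bk}\}_{\bk \in \Z_+^d}$ is an orthonormal basis of $H^2(\D^d)$: each $f_{\bk}$ differs from $z^{\bk}$ only by the unimodular scalar $\exp(\tfrac{i}{2}\sum_j \xi_j)\exp(-\tfrac{i}{2}\sum_j k_j \theta_j)$, so the orthonormality and completeness carry over directly from $\{z^{\bk}\}_{\bk \in \Z_+^d} \in B_{H^2(\D^d)}$. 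Having this ONB in hand, Lemma \ref{conjuONB} yields a unique conjugation $C_{\bm{\theta}, \bm{\xi}} \in \clc(H^2(\D^d))$ fixing each $f_{\bk}$, which is exactly the candidate conjugation of the proposition by construction.

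Next I would extract the explicit formula for $C_{\bm{\theta}, \bm{\xi}}$ by applying Corollary \ref{cor: rep of C} with the canonical basis $\zeta = \{z^{\bk}\}_{\bk \in \Z_+^d}$. The coefficients
\[
c_{\bk,\bm{m}}^{(\zeta)} = \sum_{\bl \in \Z_+^d} \langle f_{\bl}, z^{\bk}\rangle \langle f_{\bl}, z^{\bm{m}}\rangle
\]
collapse because $f_{\bl}$ is a scalar multiple of $z^{\bl}$: the only nonzero summand is $\bl = \bk = \bm{m}$, which gives $c_{\bk,\bk}^{(\zeta)} = \exp(i\sum_j \xi_j)\exp(-i\sum_j k_j \theta_j)$ and $c_{\bk,\bm{m}}^{(\zeta)} = 0$ for $\bm{m} \neq \bk$. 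Substituting into the representation formula from Corollary \ref{cor: rep of C} then yields the stated formula.

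There is no real obstacle in this argument; it is essentially a multi-index rewrite of Example \ref{illustration_example}. The one bookkeeping point I would be careful about is that the conjugation property (antilinearity, involution, isometry) comes for free from Lemma \ref{conjuONB} and need not be checked by hand from the explicit formula, although one could alternatively confirm directly from the right-hand side that $C_{\bm{\theta}, \bm{\xi}}^2 = I$ and $\|C_{\bm{\theta}, \bm{\xi}} f\| = \|f\|$ by using $|\exp(i\sum_j \xi_j)| = |\exp(-i\sum_j k_j\theta_j)| = 1$ and the fact that $\exp(i\sum_j \xi_j)\overline{\exp(i\sum_j \xi_j)} = 1$ as well as $\exp(-i\sum_j k_j \theta_j)\overline{\exp(-i\sum_j k_j \theta_j)} = 1$.
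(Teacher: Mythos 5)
Your proposal is correct and follows essentially the same route as the paper: establish that $\{f_{\bk}\}_{\bk \in \Z_+^d}$ is an orthonormal basis, invoke Lemma \ref{conjuONB} to obtain the conjugation fixing each $f_{\bk}$, compute the coefficients $c_{\bk,\bm{m}}^{(\zeta)}$ (which vanish unless $\bm{m}=\bk$), and read off the explicit formula from Corollary \ref{cor: rep of C}. The paper carries out exactly this computation in the discussion immediately preceding the proposition, as a multi-index version of Example \ref{illustration_example}.
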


Next, we characterize $C_{\bm{\theta}, \bm{\xi}}$-symmetric Toeplitz operators on $H^2(\D^d)$.

\begin{theorem}\label{example d var}
Let $\bm{\theta}, \bm{\xi} \in \R^d$, and let $\varphi = \sum_{\bk \in \Z^d} {\varphi}(\bk) z^{\bk} \in L^\infty(\mathbb{T}^d)$. Then $T_\varphi$ is $C_{\bm{\theta}, \bm{\xi}}$-symmetric if and only if
\[
\exp\Big(i \sum_{j=1}^d k_j \theta_j\Big) {\varphi}(\bk) = {\varphi}(-\bk) \qquad (\bk \in \Z^d).
\]
\end{theorem}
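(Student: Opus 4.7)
The plan is to apply Theorem \ref{thm: S toep n var}, which characterizes $C$-symmetry in terms of the unitary part $U$ of the canonical factorization $C = UJ_{H^2(\D^d)}$. The first step is to determine $U$ explicitly for $C = C_{\bm{\theta}, \bm{\xi}}$. Since $J_{H^2(\D^d)} z^{\bk} = z^{\bk}$ for every $\bk \in \Z_+^d$, the relation $U = C_{\bm{\theta},\bm{\xi}} J_{H^2(\D^d)}$ combined with Proposition \ref{prop: C on Dd} gives
\[
U z^{\bk} = C_{\bm{\theta}, \bm{\xi}} z^{\bk} = \lambda_{\bk}\, z^{\bk}, \qquad \text{where } \lambda_{\bk} := \exp\Big(i \sum_{j=1}^d \xi_j\Big)\exp\Big(-i \sum_{j=1}^d k_j \theta_j\Big).
\]
Thus $U$ is diagonal in the canonical basis of $H^2(\D^d)$ with unimodular eigenvalues $\{\lambda_{\bk}\}_{\bk \in \Z_+^d}$.

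Next, I would plug this into item (3) of Theorem \ref{thm: S toep n var}. For all $\bk, \bl \in \Z_+^d$,
\[
\inp{U^* T_\vp U z^{\bk}}{z^{\bl}} = \lambda_{\bk}\overline{\lambda_{\bl}} \inp{T_\vp z^{\bk}}{z^{\bl}} = \lambda_{\bk}\overline{\lambda_{\bl}}\, \vp(\bl - \bk),
\]
using the polydisc analogue of \eqref{eqn: Toepl coeff}. A direct computation of the scalar $\lambda_{\bk}\overline{\lambda_{\bl}}$ reveals that the $\xi_j$ terms cancel and
\[
\lambda_{\bk}\overline{\lambda_{\bl}} = \exp\Big(i \sum_{j=1}^d (l_j - k_j)\theta_j\Big).
\]
By Theorem \ref{thm: S toep n var}(3), the operator $T_\vp$ is $C_{\bm{\theta}, \bm{\xi}}$-symmetric if and only if
\[
\vp(\bk - \bl) = \exp\Big(i \sum_{j=1}^d (l_j - k_j)\theta_j\Big)\, \vp(\bl - \bk) \qquad (\bk, \bl \in \Z_+^d).
\]

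Finally, I would make the substitution $\bm{n} = \bk - \bl$. Since every $\bm{n} \in \Z^d$ can be written as a difference $\bk - \bl$ with $\bk, \bl \in \Z_+^d$ (just take $k_j = \max(n_j, 0)$ and $l_j = \max(-n_j, 0)$), the displayed condition is equivalent to
\[
\vp(\bm{n}) = \exp\Big(-i \sum_{j=1}^d n_j \theta_j\Big) \vp(-\bm{n}) \qquad (\bm{n} \in \Z^d),
\]
which rearranges to the claimed identity $\exp\big(i \sum_{j=1}^d n_j \theta_j\big)\vp(\bm{n}) = \vp(-\bm{n})$. I do not anticipate any serious obstacle: the cleanness is due to $U$ being diagonal in the canonical basis, a consequence of $C_{\bm{\theta}, \bm{\xi}}$ preserving each one-dimensional subspace $\C z^{\bk}$. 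The only mildly delicate point is observing that the indexing set $\{\bk - \bl : \bk, \bl \in \Z_+^d\}$ sweeps out all of $\Z^d$, so that the equivalence really does hold for every multi-index $\bm{n}$ and not merely those in $\Z_+^d$.
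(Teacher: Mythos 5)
Your proposal is correct and follows essentially the same route as the paper: compute the unitary part $U$ of the canonical factorization (diagonal in the canonical basis with eigenvalues $\lambda_{\bk}$), evaluate $\inp{U^*T_\vp U z^{\bk}}{z^{\bl}}$, and invoke Theorem \ref{thm: S toep n var}(3). Your closing remark that $\{\bk-\bl : \bk,\bl\in\Z_+^d\}$ exhausts $\Z^d$ is a worthwhile detail that the paper's own final display states only for $\bm{n}\in\Z_+^d$.
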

\begin{proof}
We proceed as follows: First we consider the canonical factorization of $C_{\bm{\theta}, \bm{\xi}} $ as  $C_{\bm{\theta}, \bm{\xi}}= U J_{H^2(\D^d)}$. Since $U( \z^{\bk})  =  C_{\bm{\theta}, \bm{\xi}} ( \z^{\bk})$, by Proposition \ref{prop: C on Dd}, it follows that
\[
U( \z^{\bk}) = \exp\Big(i \sum_{j=1}^d \xi_j\Big) \exp\Big(-i \sum_{j=1}^d k_j \theta_j\Big) \z^{\bk},
\]
that is
\[
U( \z^{\bk}) = \lambda \exp\Big(-i \sum_{j=1}^d k_j \theta_j\Big) \z^{\bk} \qquad (\bk \in \Z_+^d),
\]
where
\[
\lambda:= \exp\Big(i \sum_{j=1}^d \xi_j\Big).
\]
For $\bk, \bl \in \Z_+^d$, we compute
\[
\begin{split}
\inp{T_{\vp}U \z^{\bk}}{U\z^{\bl}} & = \inp{T_{\vp} \Big(\lambda \exp\Big(-i \sum_{j=1}^d k_j \theta_j\Big) \z^{\bk}\Big)}{\lambda  \exp\Big(-i \sum_{j=1}^d l_j \theta_j\Big) \z^{\bl}}
\\
& = |\lambda|^2  \exp\Big(-i \sum_{j=1}^d k_j \theta_j\Big) \times \overline{\exp\Big(-i \sum_{j=1}^d l_j \theta_j\Big)} ~~\inp{T_{\vp} \z^{\bk}}{  \z^{\bl}}
\\
& = \exp\Big(i (\sum_{j=1}^d (l_j-k_j) \theta_j\Big)\inp{T_{\vp} \z^{\bk}}{  \z^{\bl}}
\\
& = \exp\Big(i (\sum_{j=1}^d (l_j-k_j) \theta_j\Big) {\vp}_{\bl - \bk}.
\end{split}
\]
Then, by Theorem \ref{thm: S toep n var}, $T_\vp$ is $C_{\bm{\theta}, \bm{\xi}}$-symmetric if and only if
\[
\vp_{\bk - \bl} = \exp\Big(i (\sum_{j=1}^d (l_j-k_j) \theta_j\Big) {\vp}_{\bl - \bk},
\]
for all $\bk, \bl \in \Z_+^d$, or equivalently
\[
{\vp}_{\bm{n}} = \exp\Big(i (\sum_{j=1}^d (n_j) \theta_j\Big) {\vp}_{-\bm{n}} \qquad (\bm{n} \in \Z_+^d),
\]
which completes the proof of the theorem.
\end{proof}

\section{Appendix}\label{sec: appendix}

In this section, we prove some results on intertwiners that are not directly related to Toeplitz operators but fit well in the context of symmetric operators. Some of the results may be of independent interest. We begin with some elementary observations.

Recall that (see Section \ref{sec: S Toeplitz}) for $X \in \clb(H^2(\D))$, that $X = M_\theta$ for some $\theta \in H^\infty(\D)$ if and only if
\[
X M_z = M_z X.
\]
The anti-linear counter part of the above states:

\begin{proposition}
Let $X \in \cll_a(H^2(\mathbb{D}))$. Then $XM_z=M_zX$ if and only if $X= J_{H^2(\D)} M_\theta$ for some $\theta\in H^\infty(\D)$.
\end{proposition}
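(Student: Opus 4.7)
The plan is to reduce the anti-linear commutation relation $X M_z = M_z X$ to the familiar linear commutant identity $\{M_z\}' = \{M_\theta : \theta \in H^\infty(\D)\}$ that has already been recalled in Section \ref{sec: S Toeplitz}. The bridge is, of course, the canonical conjugation $J_{H^2(\D)}$, whose commutation property \eqref{eqn: CH Mz comm}, namely $J_{H^2(\D)} M_z = M_z J_{H^2(\D)}$, is the only nontrivial ingredient I will need.

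For the easy direction, I would assume $X = J_{H^2(\D)} M_\theta$ for some $\theta \in H^\infty(\D)$ and compute directly. Using \eqref{eqn: CH Mz comm} together with the fact that $M_\theta$ and $M_z$ commute (they both live in the commutative algebra $\{M_\psi : \psi \in H^\infty(\D)\}$), I get
\[
X M_z = J_{H^2(\D)} M_\theta M_z = J_{H^2(\D)} M_z M_\theta = M_z J_{H^2(\D)} M_\theta = M_z X,
\]
so this half is purely formal.

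For the converse, suppose $X \in \cll_a(H^2(\D))$ with $X M_z = M_z X$. I would set $Y := J_{H^2(\D)} X$. Since the composition of two anti-linear maps is linear, $Y$ is linear on $H^2(\D)$; and because $J_{H^2(\D)}$ is an isometry, $Y$ inherits boundedness from $X$ (this is the only place where one needs $X$ to be a bounded anti-linear operator, which I am reading into $\cll_a(H^2(\D))$ in the manner used throughout the paper). A single line using \eqref{eqn: CH Mz comm} then gives
\[
Y M_z = J_{H^2(\D)} X M_z = J_{H^2(\D)} M_z X = M_z J_{H^2(\D)} X = M_z Y,
\]
so $Y \in \{M_z\}'$. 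Invoking $\{M_z\}' = \{M_\theta : \theta \in H^\infty(\D)\}$ produces a unique $\theta \in H^\infty(\D)$ with $Y = M_\theta$, and since $J_{H^2(\D)}^2 = I$ I recover $X = J_{H^2(\D)} Y = J_{H^2(\D)} M_\theta$, as required.

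There is no real obstacle in this argument; the only subtle point is the verification that $Y = J_{H^2(\D)} X$ is linear and commutes with $M_z$, and both follow from the single identity \eqref{eqn: CH Mz comm}. In effect, this proposition is the anti-linear shadow of the classical commutant theorem for $M_z$, obtained by a twist with the canonical conjugation, much in the spirit of the canonical factorization $C = U J_{\clh}$ used throughout the paper.
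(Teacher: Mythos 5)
Your proof is correct and follows essentially the same route as the paper: conjugating by $J_{H^2(\D)}$, using $J_{H^2(\D)} M_z = M_z J_{H^2(\D)}$ to show that $J_{H^2(\D)}X$ lies in the commutant $\{M_z\}' = \{M_\theta : \theta \in H^\infty(\D)\}$, and then recovering $X = J_{H^2(\D)} M_\theta$ from $J_{H^2(\D)}^2 = I$. Your explicit remark that boundedness of $X$ is needed to invoke the commutant theorem is a welcome clarification that the paper's own proof leaves implicit.
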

\begin{proof}
If $X= J_{H^2(\D)} M_\theta$, then $XM_z=M_zX$ follows from the fact that $M_z J_{H^2(\D)} = J_{H^2(\D)} M_z$. Now suppose that $XM_z=M_zX$. Then $J_{H^2(\D)} X M_z = J_{H^2(\D)} M_z X$, and hence, by $M_z J_{H^2(\D)} = J_{H^2(\D)} M_z$, it follows that
\[
(J_{H^2(\D)} X)M_z = M_z (J_{H^2(\D)}X).
\]
This implies that $J_{H^2(\D)} X = M_\theta$ for some $\theta \in H^\infty(\D)$, and hence $X = J_{H^2(\D)} M_\theta$.
\end{proof}

Similarly, one can prove: If $X \in \clc(H^2(\mathbb{D}))$, then $M^*_z X M_z = X$ if and only if $X= J_{H^2(\D)} T_\vp$ for some $\vp \in L^\infty(\T)$. In particular, if $C \in \clc(H^2(\D))$ and $CM_z = M_z C$, then $C = J_{H^2(\D)} M_\theta$, which implies
\[
M_\theta = C J_{H^2(\D)},
\]
and hence $M_\theta$ is a unitary. Therefore, $\theta \equiv \lambda$ for some $\lambda \in \mathbb{T}$, which implies that
\[
C = \lambda J_{H^2(\D)}.
\]
In Theorem \ref{Main:theorem}, we will generalize this observation in the setting of shifts of multiplicity one.

The following is a simple (and well known) application of change of coordinates.

\begin{proposition}\label{prof: shift_linear_operator}
If $S, X \in \clb(H^2(\D))$, then:

(i) $S$ is a unilateral shift if and only if there exists a unitary $U \in \clb(H^2(\D))$ such that $S = U^* M_z U$.

(ii) If $S$ is a unilateral shift, then $SX = XM_z$ if and only if there exist $\theta\in H^\infty(\D)$ such that $X = U^*M_\theta$, where $U$ is as in (i).
\end{proposition}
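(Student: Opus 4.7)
The plan is to prove both parts by directly exploiting the change-of-basis principle that underlies shifts of multiplicity one, together with the commutant description $\{M_z\}' = \{M_\theta : \theta \in H^\infty(\D)\}$ recalled in Section~\ref{sec: S Toeplitz}.

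For part~(i), the reverse direction is immediate: if $S = U^* M_z U$ for a unitary $U$, then $\{U^* z^n\}_{n \in \Z_+} \in B_{H^2(\D)}$ satisfies $S(U^* z^n) = U^* M_z z^n = U^* z^{n+1}$, so $S$ is a unilateral shift by Definition~\ref{def: shift}. For the forward direction, I would appeal to the generating wandering subspace property of shifts of multiplicity one (already recorded in the paragraph preceding Definition~\ref{def: shift}) to obtain a basis $\{f_n\}_{n \in \Z_+} \in B_{H^2(\D)}$ with $S f_n = f_{n+1}$ for all $n \in \Z_+$. Then define $U \in \clb(H^2(\D))$ by $U f_n = z^n$, $n \in \Z_+$; this is manifestly a unitary since it sends one orthonormal basis to another. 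The identity $US f_n = U f_{n+1} = z^{n+1} = M_z z^n = M_z U f_n$, extended by linearity and continuity, gives $US = M_z U$, hence $S = U^* M_z U$.

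For part~(ii), fix $U$ as in~(i). If $X = U^* M_\theta$ for some $\theta \in H^\infty(\D)$, then
\[
SX = (U^* M_z U)(U^* M_\theta) = U^* M_z M_\theta = U^* M_\theta M_z = X M_z,
\]
using that $M_\theta$ and $M_z$ commute. Conversely, suppose $SX = XM_z$. Substituting $S = U^* M_z U$ and multiplying on the left by $U$ yields
\[
M_z (UX) = (UX) M_z,
\]
so that $UX \in \{M_z\}'$. Invoking $\{M_z\}' = \{M_\theta : \theta \in H^\infty(\D)\}$, we get $UX = M_\theta$ for some $\theta \in H^\infty(\D)$, and hence $X = U^* M_\theta$, as required.

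No step here should present a genuine obstacle; the only mildly delicate point is being careful that the unitary $U$ in~(ii) is the same one produced in~(i), which is why the statement is phrased that way. The proof is essentially a single change-of-coordinates argument, reducing everything about arbitrary unilateral shifts to the model shift $M_z$ on $H^2(\D)$.
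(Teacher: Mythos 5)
Your proof is correct and follows essentially the same route as the paper: construct $U$ by mapping the basis $\{f_n\}$ of the shift to $\{z^n\}$ for part~(i), and reduce part~(ii) to the commutant identity $\{M_z\}' = \{M_\theta : \theta \in H^\infty(\D)\}$ via $M_z(UX) = (UX)M_z$. You merely spell out a few verifications that the paper leaves as "straightforward."
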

\begin{proof}
To prove (i), assume that $S \in \clb(H^2(\D))$ is a unilateral shift. Then there exists $\{f_n\}_{n\in \Z_+} \in B_{H^2(\D)}$ such that $S f_n= f_{n+1}$. Define the unitary operator $U$ on $H^2(\mathbb{D})$ by
\[
U f_n = z^n \qquad (n \in \Z_+).
\]
Clearly, $S=U^*M_zU$. The converse part is straightforward. Part (ii) follows from (i) that $S = U^* M_z U$, and the fact that $M_z(UX) = (UX) M_z$ if and only if $UX = M_\theta$ for some $\theta\in H^\infty(\D)$.
\end{proof}

Along with the unitary $U$ as above, we now return to the issue of anti-linear operators.

\begin{proposition}
Let $X \in \clb_a(H^2(\mathbb{D}))$, and let $S \in \clb(H^2(\mathbb{D}))$ be a unilateral shift. Then $XM_z=S X$ if and only if there exists $\theta\in H^\infty(\D)$ such that $X= U^* M_\theta J_{H^2(\D)}$.
\end{proposition}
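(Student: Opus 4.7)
The plan is to handle the two directions separately, leveraging Proposition \ref{prof: shift_linear_operator} to pass between the shift $S$ and the model $M_z$, and the first proposition of the Appendix to handle the anti-linear commutation with $M_z$.

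For the sufficiency direction, suppose $X = U^*M_\theta J_{H^2(\D)}$ for some $\theta \in H^\infty(\D)$. I would compute $XM_z$ using the commutation $J_{H^2(\D)} M_z = M_z J_{H^2(\D)}$ from \eqref{eqn: CH Mz comm}, together with the fact that $M_\theta$ and $M_z$ commute. This yields $XM_z = U^* M_z M_\theta J_{H^2(\D)}$, and inserting $UU^* = I$ produces $XM_z = (U^*M_zU)(U^*M_\theta J_{H^2(\D)}) = SX$, which is exactly the required identity.

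For the necessity direction, the key idea is to absorb the shift $S$ into $M_z$ via the unitary $U$ from Proposition \ref{prof: shift_linear_operator}. Set $Y := UX \in \clb_a(H^2(\D))$. Using $S = U^*M_zU$, the hypothesis $XM_z = SX$ transforms into
\[
YM_z = UXM_z = USX = UU^*M_zUX = M_zY.
\]
Thus $Y$ is an anti-linear operator commuting with $M_z$, so the first proposition of the Appendix produces $\theta \in H^\infty(\D)$ such that $Y = J_{H^2(\D)} M_\theta$. The remaining task is to rewrite this as $M_\psi J_{H^2(\D)}$ for some $\psi \in H^\infty(\D)$.

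The one delicate point (really the only obstacle) is the apparent mismatch between $J_{H^2(\D)} M_\theta$ and the desired form $M_\psi J_{H^2(\D)}$. The resolution is the identity
\[
J_{H^2(\D)} M_\theta = M_{\bar\theta}\, J_{H^2(\D)},
\]
where, if $\theta(z)=\sum_{n\ge 0}b_n z^n$, then $\bar\theta(z):=\sum_{n\ge 0}\bar b_n z^n$ again lies in $H^\infty(\D)$ with the same $\|\cdot\|_\infty$-norm. I would verify this identity directly by applying both sides to a monomial $z^k$ (or, equivalently, to an arbitrary $f \in H^2(\D)$). Combining this with $UX = J_{H^2(\D)} M_\theta$ gives $X = U^*M_{\bar\theta}\, J_{H^2(\D)}$, and relabeling $\bar\theta$ as $\theta$ (which is legitimate since the correspondence $\theta\mapsto\bar\theta$ is a bijection on $H^\infty(\D)$) yields the required representation. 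This completes the characterization.
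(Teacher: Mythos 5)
Your proof is correct, but the reduction you use is different from the paper's. The paper linearizes $X$ by composing with $J_{H^2(\D)}$ on the \emph{right}: since $M_z J_{H^2(\D)} = J_{H^2(\D)} M_z$, the hypothesis $XM_z = SX$ becomes $(XJ_{H^2(\D)})M_z = S(XJ_{H^2(\D)})$ with $XJ_{H^2(\D)}$ now a bounded \emph{linear} operator, and part (ii) of the proposition on linear intertwiners of $M_z$ and $S$ immediately gives $XJ_{H^2(\D)} = U^*M_\theta$, i.e.\ $X = U^*M_\theta J_{H^2(\D)}$ --- no relabeling needed. You instead remove $S$ by composing with $U$ on the \emph{left}, reducing to the anti-linear commutant of $M_z$ (the first proposition of the Appendix), which forces you to prove the auxiliary identity $J_{H^2(\D)}M_\theta = M_{\bar\theta}J_{H^2(\D)}$ and then relabel $\bar\theta$ as $\theta$; that identity and the bijectivity of $\theta \mapsto \bar\theta$ on $H^\infty(\D)$ are both easily verified, so the argument is sound. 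The trade-off: the paper's route is shorter because it feeds the problem directly to the linear intertwiner lemma, while yours makes explicit the useful (and reusable) fact that $J_{H^2(\D)}$ conjugates $M_\theta$ to $M_{\bar\theta}$, at the cost of one extra step. Your sufficiency computation agrees with what the paper's ``if and only if'' chain yields and is correct as written.
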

\begin{proof}
In view of Proposition \ref{prof: shift_linear_operator}, we have $S=U^*M_zU$. Observe that $XM_z=S X$ if and only if $XM_z J_{H^2(\D)} =S X J_{H^2(\D)}$, which is equivalent to
\[
(X J_{H^2(\D)}) M_z = S (X J_{H^2(\D)}).
\]
Since $X J_{H^2(\D)} \in \clb(H^2(\D))$, by Proposition \ref{prof: shift_linear_operator} it follows that the above equality is equivalent to $X J_{H^2(\D)} = U^*M_\theta$, that is, $X=U^*M_\theta J_{H^2(\D)}$ for some $\theta \in H^\infty(\D)$. This completes the proof of the lemma.
\end{proof}

Recall the canonical factorization of conjugations (see Proposition \ref{Prop: C = UC =CU*}): If $C \in \clc(\clh)$, then there is a unique unitary $V \in \clb(\clh)$ such that
\[
C = V J_{\clh} = J_{\clh} V^*.
\]
Also recall from Lemma \ref{lemma: C = UC =CU*} that if $f_n:=U e_n$ for all $n \in \Z_+$, then
\[
S:= C S_{\clh} C,
\]
is a unilateral shift corresponding to $\{f_n\} \in B_{\clh}$. Note also that $S= C S_{\clh} C$ is equivalent to $CS = S_{\clh} C$. This clearly motivates the question of the classification of conjugations that intertwine $M_z$ and shifts of multiplicity one. The following is our answer to this question in a general setting:

\begin{theorem}\label{Main:theorem}
Let $C \in \clc(\clh)$, and let $S \in \clb(\clh)$ be the unilateral shift corresponding to $\{f_n\}_{n\in \Z_+} \in B_{\clh}$. Then
\[
C S_{\clh} = SC,
\]
if and only if there exists a constant $\lambda$ of unit modulus such that
\[
C = \lambda U J_{\clh},
\]
where $U$ on $\clh$ is the unitary defined by $U e_n = f_n$, $n \in \Z_+$.
\end{theorem}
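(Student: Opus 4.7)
\textbf{Proof plan for Theorem \ref{Main:theorem}.}

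The strategy is to reduce the intertwining relation to a commutation relation with the canonical shift and then invoke the description of the commutant of $M_z$. First I would dispatch the easy direction. Assume $C = \lambda U J_{\clh}$ with $|\lambda|=1$. Using that $J_{\clh}$ commutes with $S_{\clh}$ (the analogue of \eqref{eqn: CH Mz comm}) and that $U$ intertwines the two shifts, namely $SU=US_{\clh}$ (which is immediate from $SUe_n = Sf_n = f_{n+1} = Ue_{n+1} = US_{\clh}e_n$), we compute
\[
CS_{\clh} = \lambda U J_{\clh} S_{\clh} = \lambda U S_{\clh} J_{\clh} = \lambda S U J_{\clh} = SC.
\]

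For the nontrivial direction, assume $CS_{\clh}=SC$. By Proposition \ref{Prop: C = UC =CU*}, the canonical factorization gives a unique unitary $V \in \clb(\clh)$ with $C = V J_{\clh}$. Substituting this into $CS_{\clh}=SC$ and again using $J_{\clh}S_{\clh}=S_{\clh}J_{\clh}$, we obtain $V S_{\clh} J_{\clh} = S V J_{\clh}$, and cancelling $J_{\clh}$ yields $V S_{\clh} = S V$. Combining this with $US_{\clh}=SU$ (proved above) gives that the unitary $W := U^{-1} V$ commutes with $S_{\clh}$:
\[
W S_{\clh} = U^{-1} V S_{\clh} = U^{-1} S V = S_{\clh} U^{-1} V = S_{\clh} W.
\]

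The core step is then to identify $W$. Since $\{e_n\}_{n\in\Z_+}$ is the canonical basis of $\clh$ and $S_{\clh}$ is defined by $S_{\clh} e_n = e_{n+1}$, the map $e_n \mapsto z^n$ is a unitary identification of $(\clh, S_{\clh})$ with $(H^2(\D), M_z)$. Under this identification, $W$ corresponds to a unitary in the commutant $\{M_z\}'$. Invoking the recorded fact $\{M_z\}' = \{M_\theta : \theta \in H^\infty(\D)\}$, we conclude that $W$ corresponds to some $M_\theta$. For $M_\theta$ to be unitary, $\theta$ must be a unimodular constant: indeed, $M_\theta^* M_\theta = M_{|\theta|^2} = I$ forces $|\theta|=1$ a.e., and $M_\theta M_\theta^* = I$ combined with analyticity of $\theta$ forces $\theta$ to be constant. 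Hence $W = \lambda I$ for some $\lambda \in \T$, which gives $V = \lambda U$ and therefore $C = VJ_{\clh} = \lambda UJ_{\clh}$, as required.

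The main obstacle is the identification step in the last paragraph; everything else is bookkeeping with the canonical factorization and the intertwining $SU=US_{\clh}$. One should also note, as a consistency check, that the statement implicitly records that when $CS_{\clh}=SC$ holds, the unitary $\lambda U$ is automatically $J_{\clh}$-symmetric (by Proposition \ref{prop: U is symmetric}), so the basis $\{f_n\}$ must satisfy $\langle f_n,e_m\rangle = \langle f_m,e_n\rangle$; this falls out of the conclusion rather than being an extra hypothesis.
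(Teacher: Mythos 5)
Your argument is correct, and the forward direction coincides with the paper's. In the reverse direction, however, you take a genuinely different route at the core step. The paper also begins from the canonical factorization $C = VJ_{\clh}$ and derives $S_{\clh} = V^*SV$, but then argues directly: $\{V^*Ue_n\}_{n \in \Z_+}$ is an orthonormal basis shifted by $S_{\clh}$, the wandering subspace $\ker S_{\clh}^* = \C e_0$ is one-dimensional, so $e_0 = \lambda V^*Ue_0$ for some unimodular $\lambda$, and an induction along the shift gives $e_n = \lambda V^*Ue_n$ for all $n$, whence $V = \lambda U$. You instead pass to the unitary $W = U^{-1}V$ in the commutant of $S_{\clh}$, transport to $(H^2(\D), M_z)$, and invoke $\{M_z\}' = \{M_\theta : \theta \in H^\infty(\D)\}$ together with the fact that a unitary $M_\theta$ forces $\theta$ to be a unimodular constant (a fact the paper itself uses, without proof, in the paragraph preceding the theorem; note only the small imprecision that $M_\theta^*M_\theta$ on $H^2(\D)$ is the Toeplitz operator $T_{|\theta|^2}$ rather than a multiplication operator, which does not affect the conclusion $|\theta|=1$ a.e.). Your version is shorter and conceptually cleaner once the commutant theorem and the Beurling-type surjectivity argument are granted; the paper's version is more elementary and self-contained, using only the one-dimensionality of $\ker S_{\clh}^*$ and an induction, and in particular does not need to leave the abstract Hilbert space $\clh$ for $H^2(\D)$. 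Both correctly distinguish the canonical-factorization unitary $V$ from the unitary $U$ defined by $Ue_n = f_n$, which is the point of the statement.
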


\begin{proof}
If $C = \lambda U J_{\clh}$, then $C S_{\clh} = \lambda U S_{\clh} J_{\clh}$, and hence
\[
C S_{\clh} e_n = \lambda U S_{\clh} e_n = \lambda U e_{n+1} = \lambda f_{n+1},
\]
where, on the other hand, $SC = \lambda S U J_{\clh}$, and hence
\[
SC e_n = \lambda SU e_n = \lambda S f_n = \lambda f_{n+1},
\]
for all $n \in \Z_+$. This proves that $C S_{\clh} = SC$. For the reverse direction, suppose $C S_{\clh} = SC$, that is, $S_{\clh} = C S C$. Consider the canonical factorization of $C$ as
\[
C = V J_{\clh} = J_{\clh} V^*,
\]
where $V$ is a unique unitary on $\clh$. Then, as in the proof of Lemma \ref{lemma: C = UC =CU*}, we have
\[
S_{\clh} = V^* S V.
\]
For each $n \in \Z_+$, we compute
\[
\begin{split}
S_{\clh} (V^* U e_n) & = V^* S V(V^* U e_n)
\\
& = V^*S U e_n
\\
& = V^*S f_n
\\
& = V^* f_{n+1}
\\
& = (V^* U ) U^* f_{n+1}
\\
& = (V^* U e_{n+1}).
\end{split}
\]
Therefore, $\{V^* U e_n\}_{n\in \Z_+} \in B_{\clh}$ and $S_{\clh} (V^* U e_n) = (V^* U e_{n+1})$ for all $n \in \Z_+$. Similarly, since $S_{\clh} e_n = e_{n+1}$, $n \in \Z_+$, by the definition of unilateral shifts, we have
\[
\ker S_{\clh}^* = \C e_0 = \C (V^* U e_0).
\]
Then there exists a constant $\lambda$ of unit modulus such that
\[
e_0 = \lambda (V^* U e_0).
\]
Also
\[
\begin{split}
e_1 & = S_{\clh} e_0
\\
& = \lambda S_{\clh}(V^* U e_0)
\\
& = \lambda (V^* U e_1).
\end{split}
\]
Then, by induction, we conclude that
\[
e_n = \lambda (V^* U e_{n}) \qquad (n \in \Z_+).
\]
Equivalently, we have
\[
V = \lambda U,
\]
and hence $C = \lambda U J_{\clh}$. This completes the proof of the theorem.
\end{proof}

In particular, if $S \in \clb(H^2(\D))$ is the unilateral shift corresponding to $\{f_n\}_{n\in \Z_+} \in B_{H^2(\D)}$, then a conjugation $C \in \clc(H^2(\D))$ satisfies $C M_z = SC$ if and only if
\[
C=\lambda U^*J_{H^2(\D)},
\]
where $\lambda$ is a constant of unit modulus and $U$ is the unitary defined by $U f_n = z^n$, $n \in \Z_+$.

\vspace{0.3in}

\noindent\textit{Acknowledgement:} The second author is supported in part by the J. C. Bose fellowship. The third author is supported in part by the Core Research Grant (CRG/2019/000908), SERB, Department of Science \& Technology (DST), Government of India.

\end{document}